\newcommand\mtop{1in}
\newcommand\mbottom{1in}
\newcommand\mleft{1in}
\newcommand\mright{1in}
\newtheorem{Theorem}{Theorem}[section]
\newtheorem{Lemma}[Theorem]{Lemma}
\newtheorem{Prop}[Theorem]{Proposition}
\theoremstyle{definition}
\newtheorem{Definition}{Definition}
\newcommand{\Z}{\mathbb{Z}}
\def\P{{\mathbb{P}}}
\newcommand{\E}{\mathbb{E}}
\title{An estimate for the radial chemical distance in $2d$ critical percolation clusters}
\author{Philippe Sosoe}
\thanks{P.S.'s research is partially supported by NSF grant DMS 1811093.}
\address{Department of Mathematics, Cornell University. Malott Hall, Ithaca, NY 14853.}
\email{psosoe@math.cornell.edu}
\author{Lily Reeves}
\address{Center for Applied Mathematics, Cornell University. Frank H.T. Rhodes Hall, Ithaca 14850.}
\email{zw477@cornell.edu}
\begin{document}
\maketitle

\begin{abstract}
We derive an estimate for the distance, measured in lattice spacings, inside two-dimensional critical percolation clusters from the origin to the boundary of the box of side length $2n$, conditioned on the existence of an open connection. The estimate we obtain is the radial analogue of the one found in the work of Damron, Hanson, and Sosoe. In the present case, however, there is no lowest crossing in the box to compare to, so we construct a path $\gamma$ from the origin to distance $n$ that consists of ``three-arm'' points, and whose volume can thus be estimated by $O(n^2\pi_3(n))$. Here, $\pi_3(n)$ is the ``three-arm probability'' that the origin is connected to distance $n$ by three arms, two open and one dual-closed.  We then develop estimates for the existence of shortcuts around an edge $e$ in the box, conditional on $\{e\in \gamma\}$, to obtain a bound of the form $O(n^{2-\delta}\pi_3(n))$ for some $\delta>0$.
\end{abstract}
	
	\section{Introduction}
	
	\subsection{Chemical distance}
	We consider Bernoulli percolation on the two-dimensional lattice $\mathbb{Z}^2$ at the critical density $p_c=\frac{1}{2}$. In this context, the \emph{chemical distance} between two subsets $A$ and $B$ of $\mathbb{Z}^2$ is the least number of edges in any path of open edges connecting $A$ to $B$. We denote this distance by $\mathrm{dist}_c(A,B)$. 
	
	From the physics literature \cite{EK85,Grassberger99,HN84,HTWB85,HHS84,HS88,ZYDZ12}, it is expected that there exists an exponent $s>1$ such that if $A$ and $B$ are at Euclidean distance $n$, then:
	\begin{equation}\label{eqn: s-def}
		\mathbb{E}[\mathrm{dist}_c(A,B)\mid A\leftrightarrow B]\approx n^s.
	\end{equation}
	Here, $\{A\leftrightarrow B\}$ is the event that $A$ and $B$ are connected by a path of open edges. Despite remarkable progress on the derivation of other critical exponents, no approximation of the form \eqref{eqn: s-def} is  known for any reasonable interpretation of $\approx$. See Schramm's survey \cite{Schramm11} for a list of problems in percolation, where the determination of the exponent $s$ is listed as an important open problem. In particular, no clear connection has yet been discovered to the SLE process, which is used to derive other critical exponents in two-dimensional percolation. It is not at all clear how to relate the chemical distance, measured in lattice spacings, to any conformally invariant quantities.
	
	It is known from work of Aizenman and Burchard \cite{AB99} that, unlike in the supercritical case, the chemical distance for critical percolation is super-linear: there is $\eta>0$ such that, with high probability
	\[\mathrm{dist}_c(A,B)\ge n^{1+\eta}\]
	if $A$ and $B$ are at Euclidean distance greater than or equal to $n$. The size of $\eta$ is not made explicit in \cite{AB99} and no other lower bound is known.
	
	In \cite{KZ93}, Kesten and Zhang noted that if one restricts attention to paths inside a square box $B_n=[-n,n]\times[-n,n]$, and lets $A=\{-n\}\times [-n,n]$ and $B=\{n\}\times [-n,n]$ be the two vertical sides of $B_n$, one obtains an upper bound by considering the \emph{lowest crossing} $\ell_n$ of $B_n$:
	\begin{equation*}
		\mathbb{E}[\mathrm{dist}_c(A,B)\mid A\leftrightarrow B]\le \mathbb{E}[\# \ell_n \mid A\leftrightarrow B].
	\end{equation*}
	By now standard computations, the expected size of the lowest crossing can be expressed in terms of the \emph{three-arm probability} to distance $n$:
	\begin{equation}\label{eqn: tri-bound}
		\mathbb{E}[\#\ell_n \mid A\leftrightarrow B] \le Cn^2\pi_3(n),
	\end{equation}
	see for example Morrow, Zhang \cite{MZ05}. Here $\pi_3(n)=\mathbb{P}(A_3(n))$ is the probability that there are two open and one closed dual connections from the origin to the boundary of the box $B_n$. On the triangular lattice, the corresponding probability is computed in \cite{SW01}:
	\[\pi_3(n)= n^{-\frac{2}{3}+o(1)}.\]
	
	In \cite{DHS17,DHS21}, Damron, Hanson, and the first author answered a question of Kesten-Zhang, showing that the upper bound \eqref{eqn: tri-bound} can be improved by a factor $n^{-\delta}$, for some $\delta>0$:
	\begin{equation}\label{eqn: DHS-bound}
		\mathbb{E}[\mathrm{dist}_c(A,B)\mid A\leftrightarrow B]\le Cn^{-\delta}\mathbb{E}[\# \ell_n \mid A\leftrightarrow B].
	\end{equation}
	
	Rather than the shortest crossing across a box, in this paper we consider the expected distance from the origin to the boundary of a box $B_n$, conditioned on the existence of an open connection. Such a ``radial'' estimate is more natural than the chemical distance across a box in many applications, notably the study of random walks on percolation clusters. Unlike in the case of a horizontal crossing, there is no natural crossing to compare to in this case. Nevertheless, we show that a bound of the form \eqref{eqn: DHS-bound} also holds for the radial chemical distance.
	\begin{Theorem} \label{thm:main}
		Let $\{0\leftrightarrow \partial B_n\}$ be the event that there is an open connection from the origin $(0, 0)$ to $\partial B_n$. On $\{0\leftrightarrow \partial B_n\}$, the random variable $S_n$ is defined as the chemical distance between the origin and $\partial B_n$.
		
		There exist some $\delta >0$ and constant $C > 0$ independent of $n$ such that
		\begin{equation} \label{eqn:mainresult}
			\E[S_n\mid 0\leftrightarrow \partial B_n] \leq Cn^{2-\delta}\pi_3(n).
		\end{equation}
    \end{Theorem}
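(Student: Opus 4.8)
The plan is to transport the method of \cite{DHS17,DHS19} to the radial setting. The lowest crossing of a box is replaced by a path $\gamma$ from the origin to $\partial B_n$, built so that $S_n\le\#\gamma$ and so that every edge of $\gamma$ is a \emph{three-arm point}; this already gives $\E[\#\gamma\mid 0\leftrightarrow\partial B_n]\le Cn^2\pi_3(n)$, and the extra factor $n^{-\delta}$ is then obtained by producing, around a positive proportion of the scales near each edge of $\gamma$, an open \emph{shortcut} that cannot coexist with that edge lying on a geodesic.

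\textbf{Construction of $\gamma$ and the volume bound.} On $\{0\leftrightarrow\partial B_n\}$ the cluster of the origin crosses each dyadic annulus $\mathrm{Ann}(2^{k-1},2^k)$ for $1\le k\le\log_2 n$; inside each such annulus I would select a canonical open crossing --- e.g.\ the ``lowest'' crossing after cutting the annulus open along the positive $x$-axis --- and concatenate these, joining consecutive pieces by short open arcs along the circles $\partial B_{2^k}$. The result is an open path from $0$ to $\partial B_n$, so $S_n\le\#\gamma$, and every edge $e$ of $\gamma$ lying in the $k$-th annulus is a three-arm point up to scale $\sim 2^{k}$: it carries two disjoint open arms running along $\gamma$ (one toward $0$, one toward $\partial B_n$) and one closed dual arm. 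A union bound over $e$, sorted by the dyadic distance $2^{k}$ of $e$ to the origin and using that $\{e\in\gamma\}$ forces a polychromatic three-arm configuration from $e$ out to scale $\sim 2^{k}$ --- the conditioning on $\{0\leftrightarrow\partial B_n\}$ absorbing the cost of completing an open arm to $\partial B_n$ --- then gives, by Kesten's quasi-multiplicativity as in \cite{MZ05},
\begin{equation*}
\E[\#\gamma\mid 0\leftrightarrow\partial B_n]\ \lesssim\ \sum_{k\le\log_2 n}2^{2k}\pi_3(2^k)\ \asymp\ n^2\pi_3(n),
\end{equation*}
the sum being dominated by its top term.

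\textbf{Shortcuts and the saving.} For $e\in\gamma$ let $|e|$ denote the distance from $e$ to the origin. For each dyadic scale $2^{j}$ with $2^{j}\le\tfrac14\min(|e|,\mathrm{dist}(e,\partial B_n))$, keeping only every third admissible $j$ so that the thick annuli $A_j(e):=\mathrm{Ann}(e;2^{j},2^{j+3})$ are pairwise disjoint, I would define a shortcut event $\mathrm{Sh}_j(e)$, measurable with respect to the edges in $A_j(e)$, with two properties. First, by RSW and a gluing construction, conditionally on $\{e\in\gamma\}$, on the three-arm configuration inside $A_j(e)$, and on the configuration outside $B_{2^{j+3}}(e)$, one has $\P(\mathrm{Sh}_j(e) \mid \cdot)\ge c$ for some $c>0$ independent of $e,j,n$. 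Second, on $\mathrm{Sh}_j(e)$ the relevant portion of $\gamma$ can be rerouted through $A_j(e)$ so as to yield an open connection from $0$ to $\partial B_n$ that avoids $e$ and is no longer than the arc of $\gamma$ it replaces; hence, up to a lower-order contribution from edges within distance $n^{\epsilon_0}$ of $0$ or of $\partial B_n$, $S_n$ is bounded by the number of edges $e\in\gamma$ for which no $\mathrm{Sh}_j(e)$ occurs. Since the $A_j(e)$ are disjoint and the conditioning on $\{e\in\gamma\}$ reaches into $A_j(e)$ only through the three-arm configuration already being conditioned on, the events $\{\mathrm{Sh}_j(e)\ \text{does not occur}\}$ over the $\asymp\log n$ admissible $j$ are dominated by independent events of probability $\le 1-c$; folding this into the quasi-multiplicativity estimate yields $\P(e\in\gamma,\ \text{no shortcut at any scale})\le C(1-c)^{c'\log n}\,\P(e\in\gamma)$, that is, an extra factor $n^{-\delta}$, and summing over $e$ together with the volume bound gives \eqref{eqn:mainresult}.

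\textbf{Main obstacle.} The crux is the second property of the shortcut events together with the independence-across-scales bookkeeping. One must show that an RSW-produced detour, once spliced into $\gamma$, genuinely produces a \emph{valid} open connection from $0$ to $\partial B_n$ that is \emph{no longer} than before --- which requires controlling how $\gamma$ (and any competing open path) enters and exits $A_j(e)$, possibly several times, which subarc is actually replaced, and how detours carried out at different scales around the same $e$ interact --- and one must verify that the $\asymp\log n$ shortcut events around a fixed $e$ remain jointly ``independent enough'' after the relevant conditioning, so that the decisive product $(1-c)^{c'\log n}=n^{-\delta}$ survives. This is precisely where the quasi-multiplicativity bounds have to be redone while carrying the extra ``shortcut'' events. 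The features special to the radial problem --- the absence of any canonical macroscopic crossing to start from, and the smallness of $\P(0\leftrightarrow\partial B_n)\asymp\pi_1(n)$, which makes the gluing near the origin and near $\partial B_n$ different from gluing in the bulk --- are what make this harder than in \cite{DHS17,DHS19}.
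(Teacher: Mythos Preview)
Your overall architecture is right --- build a three-arm path $\gamma$ with $\E[\#\gamma\mid 0\leftrightarrow\partial B_n]\lesssim n^2\pi_3(n)$, then upgrade using shortcuts --- but the shortcut step as you describe it does not work, and this is a genuine gap rather than a matter of detail.

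You ask only that the detour be ``no longer than the arc of $\gamma$ it replaces'' and then assert that $S_n$ is bounded by the number of edges of $\gamma$ with no shortcut at any scale. This inference fails: having, for each good edge $e$, a rerouted path of length $\le\#\gamma$ that avoids $e$ does \emph{not} produce a single path that simultaneously avoids all good edges, nor one whose length is controlled by the count of bad edges. Detours around different edges overlap and may themselves pass through other good edges; nothing you have said rules out the combined path being as long as $\gamma$. The mechanism actually used (from \cite{DHS19}, imported here) is quantitatively different: one constructs $\nu$-\emph{shortcuts} with $\nu=n^{-c}$, meaning the detour $r$ satisfies $\#r\le n^{-c}\#\tau$ where $\tau$ is the subarc of $\gamma$ it replaces. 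One then picks a maximal disjoint family of such shortcuts and forms a new path $s$ consisting of the shortcuts together with the edges of $\gamma$ not covered by any $\tau_l$. The length of $s$ is then bounded by $n^{-c}\sum_l\#\tau_l$ plus the number of edges of $\gamma$ admitting no $n^{-c}$-shortcut; the first term is $\le n^{-c}\#\gamma$, and it is the second term for which one needs the ``no shortcut at any scale has probability $\le n^{-\eta}$ given $e\in\gamma$'' estimate. Your accounting conflates these two contributions.

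A second, smaller issue: your construction of $\gamma$ by concatenating lowest crossings of dyadic annuli and ``joining consecutive pieces by short open arcs along $\partial B_{2^k}$'' is not well-defined --- there need not be any open arc along $\partial B_{2^k}$ linking the endpoints. The paper's construction instead threads the successive innermost open circuits $\mathcal{C}_1,\dots,\mathcal{C}_{\mathcal K}$ around $0$ and the ``closest-to-a-closed-dual-arm'' open paths between them; verifying the three-arm property for edges on these pieces requires a careful multi-scale argument (Lemmas \ref{lemma:A_3}--\ref{lemma:A_4}) because edges on an inner circuit only obviously have two open arms plus one closed arm to some intermediate scale, and one must trade the extra open arms from enclosing circuits against the three-arm cost. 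Finally, you correctly flag the comparison $\P(E\mid e\in\gamma)\lesssim\P(E\mid A_3(e,d))$ as the crux; in the paper this is Proposition~\ref{prop:main}, and its proof (Section~\ref{sec:improvements}) is the bulk of the work, requiring case-by-case gluing constructions that force $e\in\gamma$ depending on which piece of $\gamma$ the edge lies on and whether $e$ is nearer $0$ or $\partial B_n$.
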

    Note that, in addition to the absence of a lowest crossing to compare to, the conditioning in the estimate \eqref{eqn:mainresult} is singular, in that the probability $\mathbb{P}(0\leftrightarrow \partial B_n)$ tends to zero with $n\rightarrow \infty$. In the next subsection, we explain further why the arguments of the present paper differ essentially from \cite{DHS17,DHS21} despite the similarity in the results (c.f. in particular Sections \ref{sec: 3-arm} and \ref{sec:improvements}).

d    Simulation results in \cite{ZYDZ12} suggest that the critical exponent $s$ in \eqref{eqn: s-def} for the chemical distance is approximately $1.1308$. The value of $\pi_3(n)$ on the square lattice is expected to be on the order of $n^{-2/3}$, as on the triangular lattice, although this has not yet been proved. Thus, we expect the optimal value of $\delta$ to be approximately $0.2$. This is out of reach of current methods.
	
	\subsection{Overview of the paper}
	
	The key estimate from \cite{DHS21}, \eqref{eqn: strict-main} in Section \ref{section:proofofthm}, provides a method to build a path of expected length of $n^{2-\delta}\pi_3(n)$ by finding, with high probability, shortcuts around paths consisting of ``three-arm'' edges. Thus one gains the $n^{-\delta}$ factor in \eqref{eqn:mainresult}. Using this estimate requires two inputs, one of which, the existence of an open connection of volume $O(n^2\pi_3(n))$, is a given in the case of crossing paths in \cite{DHS17, DHS16, DHS21} but is non-trivial in our case. The second, an estimate for the probability of shortcuts around an edge $e$, conditional on the edge belonging to a three-arm path, leads to considerably more involved constructions than in the aforementioned works.
	
	Concerning the first point above, in the radial case, unlike the case considered in \cite{DHS21}, there is no canonical path of three-arm points connecting the origin to the boundary of $B_n$. In Section \ref{sec: 3-arm}, we present and complete a construction from the unpublished note \cite{DHS16} to find a path $\gamma$ of expected length $n^2\pi_3(n)$ conditional on the existence of a path from the origin to $\partial B_n$. The idea is to consider successive innermost circuits around the origin, and build three-arm paths connecting these. 
	
	%Completing the argument in \cite{DHS16} requires a estimate comparing monochromatic to polychromatic arm probabilities, which we provide in Section \ref{section:colorswitching}. 	
	
	Secondly, we need an estimate for the conditional probability that there is no shortcut around an edge $e\in B_n$, conditioned on $e\in \gamma$. We show that this probability is bounded up to a constant factor by the probability of the same shortcut event conditioned on a three-arm event centered at $e$. The requisite gluing constructions appear in Section \ref{sec:improvements}. Given that the construction of $\gamma$ is rather more complicated than that of the lowest crossing, the RSW/generalized FKG estimates here are more involved than the ones used to obtain the corresponding comparison in \cite{DHS17, DHS16, DHS21}. In particular, forcing the occurrence of the event $e\in \gamma$ is a more delicate matter than in these works.
	%In Section \ref{section:colorswitching}, we present a method to translate open or closed arms in a subregion of an annulus $B(n,N)$ from the primal to the dual lattice of $\Z^2$ and use this to compare monochromatic to polychromatic $k$-arm events. We also explain how this can be used to obtain an approximate color switching result for arm events on the square lattice, analogous to that known on the triangular lattice \cite{Nolin08}.

	\subsection{Notations} \label{subsection:notations}
	In this section, we summarize the notations we will use. We mostly follow the conventions established in \cite{DHS16} and \cite{DHS21}.
	
	%NOTATION FOR STANDARD BOND PERCOLATION... 
	Throughout the paper, we consider Bernoulli percolation on the square lattice $\Z^2$ seen as a graph with the edge set $\mathcal{E}$ consisting of all pairs of nearest-neighbor vertices. 
	
	We let $\P$ be the critical bond percolation measure 
	\[\P = \prod_{e\in \mathcal{E}} \frac12 (\delta_0+\delta_1)\] 
	on the state space $\Omega = \{0,1\}^{\mathcal{E}}$, with the product $\sigma$-algebra. An edge $e$ is said to be \emph{open} in the configuration $\omega\in \Omega$ if $\omega(e) = 1$ and \emph{closed} otherwise. 
	
	A (lattice) \textit{path} is a sequence $(v_0, e_1, v_1, \dots , v_{N-1}, e_N , v_N)$ such that for all $k = 1,\dots,N$, $\|v_{k-1} - v_k\|_1 = 1$ and $e_k = \{v_{k-1},v_k\}$. 
	%For the rest of the paper, we fix some deterministic order of all paths on the lattice.
	A \textit{circuit} is a path with $v_0 = v_N$. 
	%For such paths we denote $\#\gamma = N + 1$ (the number of vertices) and $\#E(\gamma)$ the number of edges in $\gamma$; that is, the number of elements in the edge set $E(\gamma)$ of $\gamma$. If $V \subset \Z^2$ then we say that $\gamma \in V$ if $v_k \in V$ for $k=0,\dots ,N$. 
	%A path $\gamma$ is said to be (vertex) self-avoiding if $v_i =v_j$ implies $i = j$ and a circuit is (vertex) self-avoiding if $v_i = v_j \neq v_0$ implies $i = j$. 
	Given $\omega \in \Omega$, we say that $\gamma=(e_k)_{k=1,\dots,N}$ is open in $\omega$ if $\omega(e_k) = 1$ for $k = 1,\dots ,N$.
	
	%NOTATION FOR THE DUAL LATTICE... 
	
	%The coordinate vectors $\mathbf{e}_1, \mathbf{e}_2$ are
	%\[\mathbf{e}_1 = (1,0), \quad \mathbf{e}_2 = (0,1).\]
	
	The dual lattice is written $((\Z^2)^*, \mathcal{E}^*)$, where 
	\[(\Z^2)^* = \Z^2 + \left( \frac12,\frac12 \right)\] 
	and $\mathcal{E}^*$ its nearest-neighbor edges. 
	
	Given $\omega \in \Omega$, we define $\omega^* \in \Omega^* = \{0,1\}^{\mathcal{E}^*}$ by the relation $\omega^*(e^*) = \omega(e)$, where $e^*$ is the dual edge that shares a midpoint with $e$. The edge $e^*$ is said to be \emph{dual-closed} if $\omega(e)=0$.
	For any $V \subset \mathbb{R}^2$ we write 
	\[V^*=V+\left(\frac{1}{2},\frac{1}{2}\right).\]
	%NOTATION FOR BOXES...
	
	For $x \in \Z^2$, we define
	\[B(x,n)=\{(x_1, x_2)\in \mathcal{E} :x_1\sim x_2, |x_1-x|_\infty \le n, |x_2-x|_\infty \le n\}.\]
	$x\sim y$ means $x$ and $y$ are nearest neighbors on the lattice $\Z^2$.
	When $x$ is the origin $(0,0)$, we sometimes abbreviate $B((0,0),n)$ by $B_n$ or $B(n)$. We denote by $\partial B(x,n)$ the set
	\[\partial B(x,n)=\{(x_1, x_2)\in \mathcal{E} :x_1\sim x_2, |x_1-x|_\infty = n, |x_2-x|_\infty = n\}.\]
	
	In this paper, we sometimes abuse notation and write $B(e,n)$ for an edge $e$ to mean the box $B(e_x,n)$ where $e_x$ is the lower-left endpoint of $e$, defined as the first of the two endpoints of $e$ in the lexicographic order on $\mathbb{Z}^2$.
	
	For the purpose of this paper, we define an annulus centered at $x \in \Z^2$ as the difference between two boxes of different sizes centered at $x$:
	\[\text{For $0<n<N$, }  B(x,n,N) = B(x,N) \setminus B(x,n).\]
	We often abbreviate $B((0,0),n,N)$ as $B(n,N)$ when $x=(0,0)$ is implied.
	
	Distances in this paper are measured in the $\ell_\infty$ norm 
	\begin{equation*}
	\|x-y\|_\infty = \max_{i=1,2}|x_i-y_i|, \quad \text{where $x,y\in \Z^2$}.
	\end{equation*}
	
	%NOTATION FOR ARM EVENTS...
	
	A \emph{color sequence} $\sigma$ of length $k$ is a sequence $(\sigma_1, \dots, \sigma_k)\in \{O, C\}^k$. Each $\sigma_i$ indicates a ``color'', with $O$ representing open and $C$ representing dual closed. %From the percolation point of view, the most interesting color sequences consist of open and/or closed dual connections, but in Section \ref{section:colorswitching} it will be useful to consider primal closed and dual open arms, as well.
	
	%An open (respectively, closed) primal arm in $B(n,N)$ connecting $\partial B_n$ and $\partial B_N$ is a path of open (respectively, closed) edges in $B(n,N)$ with one endpoint lying in $\partial B_n$ and another endpoint in $\partial B_N$. We define an open (closed, resp.) \textit{dual} arm in $B(n,N)^*$ connecting $\partial B_n$ and $\partial B_N$ to be a path on of open (respectively, closed) dual dual edges lying in $(B(n,N))^*$ and connecting $\partial B_n$ to $\partial B_N$. Equivalently, a dual arm $\sigma$ is a dual path such the (primal) path obtained by shifting $(-1/2,-1/2)$ is an arm connecting $\partial B_n$ and $\partial B_N$.
	
	For $n\leq N$, we define a $k$-arm event with color sequence $\sigma$ to be the event that there are $k$ disjoint paths whose colors are specified by $\sigma$ in the annulus $B(n,N)$ connecting $\partial B_n$ and $\partial B_N$. Formally,
	\begin{equation*}
		A_{k,\sigma}(n,N) := \{\partial B_n \leftrightarrow_{k,\sigma} \partial B_N\}.
	\end{equation*}
	
	We note a technical point: for $A_{k,\sigma}(n,N)$ to be defined, $n$ needs to be big enough for all $k$ arms to be (vertex)-disjoint. We define $n_0(k)$ to be the smallest integer such that $|\partial B(n_0(k))| \geq k$. Color sequences that are equivalent up to cyclic order denote the same arm event.

    In Section \ref{sec:improvements}, we use \emph{half-plane} versions of the arm events above. The half-plane event $A_{k,\sigma}^{hp}(n,N)$ is the event that $A_{k,\sigma}(n,N)$ occurs and all arms are contained in a half-plane $\{(x,y)\in \mathbb{R}^2: v\cdot(x,y)\le 0\}$ for some unit vector $v\in \{(0,1),(1,0)\}$.
       
    We use special notation for the probabilities of certain arm events that occur frequently. We denote by $\pi_2(n,N)$ the two-arm probability for two arms, one open and one closed dual:
    \[\pi_2(n,N):=\P(A_{2,OC}(n,N));\]
    we also let $\pi_3(n,N)$ be the three-arm probability for the event that there are two open arms and one closed dual arm in $B(n,N)$
    \[\pi_3(n,N):=\P(A_{3,OOC}(n,N)).\]
    For $k\ge 3$, we denote by $\pi_k(n,N)$ the event that there are $k-1$ disjoint open and one closed dual arm joining the boundaries of the annulus.
    %by $\pi_4(n,N)$ we denote the alternating four-arm probability $A_{4,OCOC}$
    %\[\pi_4(n,N)=\P(A_{4,OCOC}(n,N)).\]
    Monochromatic $k$-arm probabilities are denoted by $\pi'_k$:
    \[\pi'_k(n,N):=\P(A_{k,O\cdots O}(n,N))=\P(A_{k,C\cdots C}(n,N)).\]

A crucial feature of arm events is \emph{quasi-multiplicativity}, expressed in the following Proposition. See \cite[Proposition 12]{Nolin08}.
\begin{Prop}\label{prop: smoothness}
  Let $k\ge 1$ be an integer, and $\sigma\in \{O,C\}^k$ a color sequence. Then there are constants $c,C>0$ such that, uniformly in $n$,
  \[c\P(A_{k,\sigma}(n/2,N))\le \P(A_{k,\sigma}(n,N))\le C\P(A_{k,\sigma}(n,2N)),\]
  and, for $n<n'<N$,
    \begin{equation}\label{eqn: smoothness}
    \P(A_{k,\sigma}(n,N))\ge c\P(A_{k,\sigma}(n,n'))\P(A_{k,\sigma}(n',N)).
    \end{equation}
\end{Prop}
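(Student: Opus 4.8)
The plan is to obtain the first inequality from a trivial containment and the other two from Russo--Seymour--Welsh (RSW) theory and the FKG inequality, using as the one substantial input the \emph{arm separation lemma} (see \cite{Nolin08}). For the first inequality: if $n\ge 2n_0(k)$, then any realization of $A_{k,\sigma}(n/2,N)$ is also a realization of $A_{k,\sigma}(n,N)$, since the portions inside $B(n,N)$ of the $k$ disjoint colored arms crossing $B(n/2,N)$ remain disjoint and still join $\partial B_n$ to $\partial B_N$; hence $\P(A_{k,\sigma}(n/2,N))\le \P(A_{k,\sigma}(n,N))$, and for the finitely many remaining small $n$ the ratio is absorbed into the constant because both probabilities are then bounded away from $0$. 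The same containment gives $\P(A_{k,\sigma}(n',N))\le \P(A_{k,\sigma}(2n',N))$, which I use below.

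The key input I would invoke is the following \emph{arm separation} statement: there is $c>0$ and, at each scale $m$, a finite family of pairwise disjoint arcs $I_1,\dots,I_k\subset\partial B_m$ at mutual distance of order $m$ and in the cyclic order prescribed by $\sigma$, such that if $\mathrm{Sep}_m$ denotes the sub-event of $A_{k,\sigma}(n,N)$ on which the $i$-th arm reaches $\partial B_m$ through $I_i$ and is surrounded, inside a box of radius of order $m$ about its landing point, by a ``fence'' of color $\sigma_i$ that allows it to be continued, then $\P(\mathrm{Sep}_m)\ge c\,\P(A_{k,\sigma}(n,N))$ whenever $m$ is one of the two boundary scales $n$, $N$. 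In words: among the configurations realizing the bare arm event, a $\P$-positive fraction already land, at each boundary, in a macroscopically well-separated and extendable fashion. Granting this, the remaining two bounds are routine gluing.

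For $\P(A_{k,\sigma}(n,N))\le C\,\P(A_{k,\sigma}(n,2N))$ I would condition on $\mathrm{Sep}_N$ --- measurable with respect to the edges of $B(n,N)$ and of probability $\ge c\,\P(A_{k,\sigma}(n,N))$ --- and in the edge-disjoint annulus $B(N,2N)$ use RSW and FKG to build, with probability at least a universal $c_{\mathrm{RSW}}>0$, $k$ disjoint paths, the $i$-th of color $\sigma_i$, joining $I_i\subset\partial B_N$ to $\partial B_{2N}$ through the $i$-th fence; concatenation realizes $A_{k,\sigma}(n,2N)$, and independence gives the bound. For \eqref{eqn: smoothness}, I would take a sub-event $\mathrm{Sep}_1\subset A_{k,\sigma}(n,n')$ forcing well-separated, fenced landing arcs on $\partial B_{n'}$ (measurable with respect to $B(n,n')$, probability $\ge c\,\P(A_{k,\sigma}(n,n'))$) and a sub-event $\mathrm{Sep}_2\subset A_{k,\sigma}(2n',N)$ forcing the same on $\partial B_{2n'}$ (measurable with respect to $B(2n',N)$, probability $\ge c\,\P(A_{k,\sigma}(2n',N))\ge c\,\P(A_{k,\sigma}(n',N))$). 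In the intermediate annulus $B(n',2n')$ --- bounded aspect ratio, edge-disjoint from the other two regions --- RSW and FKG join, for each $i$, the $i$-th landing arc of $\mathrm{Sep}_1$ on $\partial B_{n'}$ to that of $\mathrm{Sep}_2$ on $\partial B_{2n'}$ by a color-$\sigma_i$ path, keeping the $k$ paths disjoint, with uniformly positive probability. By independence of the three regions this yields $\P(A_{k,\sigma}(n,N))\ge c'\,\P(A_{k,\sigma}(n,n'))\,\P(A_{k,\sigma}(n',N))$.

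The main obstacle is the arm separation lemma itself. Every gluing step above is a short application of RSW and FKG once the landing points are macroscopically separated and fenced, but proving that conditioning on the bare arm event does not force the arms to cluster near the two boundaries is genuinely delicate. I expect this to be the crux, and would establish it in the standard way: explore the arms from one boundary inward, apply a geometric re-separation move to spread out bunched-up landing points, and use quantitative RSW lower bounds to build the separating arcs and the fences with uniformly positive conditional probability. The rest then follows the template already used in \cite{DHS19}.
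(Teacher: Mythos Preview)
The paper does not give its own proof of this proposition: it simply records the statement and refers the reader to \cite[Proposition 12]{Nolin08}. So there is no ``paper's proof'' to compare against beyond that citation.

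Your sketch is correct and is precisely the standard route taken in \cite{Nolin08}: the first inequality is the trivial containment $A_{k,\sigma}(n/2,N)\subset A_{k,\sigma}(n,N)$, and the other two are obtained by combining the arm separation lemma with RSW/FKG gluing in an annulus of bounded aspect ratio. You have identified the right division of labor --- the only substantial ingredient is separation, and once it is available the extensions and the quasi-multiplicativity are routine. Your plan for the separation lemma (explore inward, re-separate bunched landing points, build fences via RSW) is also the standard one. In short, your proposal matches the approach of the reference the paper cites; nothing is missing.
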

	
	Generally, we reserve the letter $A$ for arm events, the letter $B$ for boxes, and the capital letter $C$ (with various fonts) for circuits and events related to circuits, for ``closed'' when used as a subscript, and at times for various constants, along with the small letter $c$. All other notations will be specified as needed.
	
	\subsection{The standard gluing construction} \label{subsection:gluing}
	
	One technique that we will repeatedly use in the proof, presented in Section \ref{sec:improvements}, of the main estimate Proposition \ref{prop:main}, is the standard gluing construction using the generalized Fortuin-Kasteleyn-Ginibre (FKG) inequality combined with Russo-Seymour-Welsh (RSW) estimates. This methodology was first applied extensively in H. Kesten's papers on critical percolation \cite{Kesten86}, \cite{Kesten87}. Let us begin by stating the FKG and RSW estimates.
	
	\begin{Theorem}[Generalized FKG, \cite{Nolin08}]
		Consider two increasing events $A^+$, $\tilde{A}^+$, and two decreasing events $A^-$, $\tilde{A}^-$. Assume that there exist three disjoint finite sets of vertices $\mathcal{A}$ , $\mathcal{A}^+$, and $\mathcal{A}^-$ such that $A^+$, $A^-$, $\tilde{A}^+$, and $\tilde{A}^-$ depend only on the sites in, respectively, $\mathcal{A} \cup\mathcal{A}^+$, $\mathcal{A}\cup \mathcal{A}^-$, $\mathcal{A}^+$, and $\mathcal{A}^-$. Then we have
		\begin{equation}\label{eqn: FKG}
			\hat{\P}(\tilde{A}^+ \cap \tilde{A}^-\mid A^+ \cap A^-) \geq \hat{\P}(\tilde{A}^+)\hat{\P}(\tilde{A}^-) 
		\end{equation}
		for any product measure $\hat{\P}$ on $\Omega$.
    \end{Theorem}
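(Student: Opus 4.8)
The plan is to reduce the statement to the classical FKG (Harris) inequality for product measures by conditioning on the configuration restricted to the shared set $\mathcal{A}$. Write $\omega_{\mathcal{A}}$, $\omega_{\mathcal{A}^+}$, $\omega_{\mathcal{A}^-}$ for the restrictions of $\omega$ to the three disjoint finite sets. Since $\hat{\P}$ is a product measure, these three pieces are mutually independent; in particular $\tilde{A}^+$ (a function of $\omega_{\mathcal{A}^+}$ alone) and $\tilde{A}^-$ (a function of $\omega_{\mathcal{A}^-}$ alone) are independent, so $\hat{\P}(\tilde{A}^+\cap\tilde{A}^-)=\hat{\P}(\tilde{A}^+)\hat{\P}(\tilde{A}^-)$. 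It therefore suffices to establish the unconditioned inequality
\[
\hat{\P}\big(\tilde{A}^+\cap\tilde{A}^-\cap A^+\cap A^-\big)\ \ge\ \hat{\P}(\tilde{A}^+)\,\hat{\P}(\tilde{A}^-)\,\hat{\P}\big(A^+\cap A^-\big),
\]
after which one divides by $\hat{\P}(A^+\cap A^-)$; if that probability vanishes the conditional statement is vacuous.

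To prove the displayed inequality, condition on $\omega_{\mathcal{A}}=\xi$. For fixed $\xi$, the event $A^+$ becomes an event $A^+_\xi$ depending only on $\omega_{\mathcal{A}^+}$, still increasing in those coordinates, and similarly $A^-$ becomes a decreasing event $A^-_\xi$ depending only on $\omega_{\mathcal{A}^-}$. Because $\omega_{\mathcal{A}^+}$ and $\omega_{\mathcal{A}^-}$ are independent of each other and of $\omega_{\mathcal{A}}$, conditionally on $\{\omega_{\mathcal{A}}=\xi\}$ the events grouped on $\mathcal{A}^+$ are independent of those grouped on $\mathcal{A}^-$, so
\[
\hat{\P}\big(\tilde{A}^+\cap\tilde{A}^-\cap A^+\cap A^-\,\big|\,\omega_{\mathcal{A}}=\xi\big)
=\hat{\P}\big(\tilde{A}^+\cap A^+_\xi\big)\cdot\hat{\P}\big(\tilde{A}^-\cap A^-_\xi\big).
\]
Now apply the ordinary FKG inequality on the coordinates in $\mathcal{A}^+$ to the two increasing events $\tilde{A}^+$ and $A^+_\xi$, and on the coordinates in $\mathcal{A}^-$ to the two decreasing events $\tilde{A}^-$ and $A^-_\xi$, bounding the right-hand side below by
\[
\hat{\P}(\tilde{A}^+)\,\hat{\P}(A^+_\xi)\cdot\hat{\P}(\tilde{A}^-)\,\hat{\P}(A^-_\xi)
=\hat{\P}(\tilde{A}^+)\,\hat{\P}(\tilde{A}^-)\cdot\hat{\P}\big(A^+\cap A^-\,\big|\,\omega_{\mathcal{A}}=\xi\big).
\]
Averaging this bound over $\xi$ against the law of $\omega_{\mathcal{A}}$ gives the displayed inequality, and hence the theorem.

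There is no genuinely hard step here; the only points requiring care are (i) noting that fixing $\omega_{\mathcal{A}}$ preserves the monotonicity of $A^{\pm}$ in the remaining coordinates, which is immediate from the product structure of $\Omega$ and the definition of increasing/decreasing events, and (ii) the degenerate case $\hat{\P}(A^+\cap A^-)=0$, where the conditional probability is undefined and the statement is empty. All the probabilistic content is supplied by the two applications of the classical FKG inequality on the disjoint coordinate blocks $\mathcal{A}^+$ and $\mathcal{A}^-$.
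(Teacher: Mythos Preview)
Your argument is correct and is essentially the standard proof of this inequality (conditioning on the shared block $\mathcal{A}$, using independence of the $\mathcal{A}^+$ and $\mathcal{A}^-$ coordinates, and applying Harris/FKG separately on each block). Note that the paper does not actually prove this statement: it is quoted as a known result from \cite{Nolin08}, so there is no ``paper's proof'' to compare against, but your sketch matches the argument given there.
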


    \begin{Theorem}[Russo-Seymour-Welsh, \cite{Nolin08}]
        Let $k>0$, and let $H_k(n)$ be the event that there is a horizontal open crossing of the rectangle $[0,kn]\times[0,n]$. There exists $\delta_k>0$ such that
        \begin{equation}\label{eqn: RSW}
            \mathbb{P}(H_k(n))\ge \delta_k, \quad n\ge 1.
        \end{equation}
    \end{Theorem}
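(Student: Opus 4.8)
The plan is to prove this by the classical Russo--Seymour--Welsh argument: start from the $\tfrac12$-probability of crossing a square, which comes from self-duality, and bootstrap it to a uniformly positive crossing probability for a rectangle of any fixed aspect ratio. Since for each $n$ the event $H_k(n)$ is a crossing event for a finite box and hence has positive probability, it is enough to produce a bound $\P(H_k(n))\ge\delta_k>0$ valid for all $n\ge n_0$; the finitely many remaining values of $n$ can then be absorbed by taking a minimum.

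First I would record the square estimate. Writing $\mathcal{C}_h(a\times b)$ for the event of an open left--right crossing of $[0,a]\times[0,b]$, self-duality of $\Z^2$ at $p_c=\tfrac12$ gives, for a near-square rectangle $R$ whose side lengths differ by one, that exactly one of ``$R$ has an open left--right crossing'' and ``the dual rectangle has a closed top--bottom crossing'' occurs; these two events are exchanged by the $90^\circ$ rotation-and-duality symmetry of the model, so each has probability $\tfrac12$. Since widening a rectangle in the crossing direction only decreases the crossing probability, $\P(\mathcal{C}_h(n\times n))\ge\tfrac12$, and by lattice symmetry the same holds for top--bottom crossings of a square.

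The heart of the matter is the RSW lemma: there is $c_1>0$ with $\P(\mathcal{C}_h(\tfrac32 n\times n))\ge c_1$ for all $n$. I would condition on $\mathcal{C}_h(n\times n)$ and take $L$ to be the \emph{lowest} open left--right crossing of $[0,n]^2$; because $L$ is measurable with respect to the edges lying on or below it, the part of the square strictly above $L$ is unexplored. Reflecting $L$ across the vertical midline $x=\tfrac n2$ yields a path distributed as the lowest crossing of the reflected square, and I would then use the reflection symmetry of $\P$ together with FKG applied in the fresh region above $L$, with the square- and rectangle-crossing estimates of the previous paragraph as inputs, to join $L$ to its mirror image by an open path; concatenating produces an open left--right crossing of $[0,\tfrac32 n]\times[0,n]$ with probability bounded below. (Tassion's conditioning argument, or the Bollob\'as--Riordan variant, could be used in place of this.) I expect this step to be the main obstacle: making the lowest-crossing conditioning and the reflection/FKG gluing rigorous is where the real content lies, whereas the other steps are duality and routine gluing.

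Finally I would bootstrap to arbitrary aspect ratio. From the $\tfrac32\times1$ estimate and the square estimate, an open left--right crossing of $[0,2n]\times[0,n]$ is forced by the intersection of three events --- an open left--right crossing of $[0,\tfrac32 n]\times[0,n]$, an open left--right crossing of $[\tfrac n2,2n]\times[0,n]$, and an open top--bottom crossing of the square $[\tfrac n2,\tfrac32 n]\times[0,n]$, which by planarity meets both of the horizontal crossings --- so by FKG it has probability at least $c_1^2\cdot\tfrac12$. Iterating this gluing a number of times depending only on $k$ produces $\P(H_k(n))\ge\delta_k$ with $\delta_k>0$ independent of $n$, completing the argument.
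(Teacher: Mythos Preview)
The paper does not prove this statement: it is stated as a classical input and attributed to \cite{Nolin08}, then used as a black box throughout the gluing constructions. So there is no ``paper's own proof'' to compare against.

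Your sketch is the standard RSW argument and is correct in outline. The duality step giving $\P(\mathcal{C}_h(n\times n))\ge\tfrac12$ is fine, the bootstrap from aspect ratio $\tfrac32$ to general $k$ via FKG and overlapping rectangles is routine, and you correctly flag the $\tfrac32\times 1$ step as the one requiring real work. One caution: your description of the lowest-crossing/reflection step is a bit loose. The classical Russo argument does not literally ``join $L$ to its mirror image''; rather one conditions on the lowest crossing $L$ of the left square, uses that the region above $L$ is fresh, and then asks that an independent crossing of the right (overlapping) square, together with a vertical crossing in the overlap, hooks up with $L$. The reflection symmetry enters in bounding the probability that such a hookup occurs, via a ``square-root trick'' or a symmetry argument on where $L$ exits the overlap region. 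If you were to write this out you would need to be careful that the conditioning on $L$ really leaves the relevant region independent and that the symmetry/FKG step genuinely produces a uniform lower bound; as written your paragraph gestures at this but does not pin it down. Since the paper only needs the statement, this level of sketch is appropriate, but be aware that a full proof requires more than what you have written.
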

    
    We illustrate the combination of the above results with an elementary construction showing that two pairs of arms, two open and two closed dual, from the origin to distance $n$ and across the annulus $B(n,N)$, respectively, can be glued to form the event $A_{2,OC}(N)$. The example is represented schematically in Figure \ref{fig: drawing13}. The blue box denotes the set of vertices $\mathcal{A}^+$, the red box denotes the set of vertices $\mathcal{A}^-$, and the rest of $B_N$ is $\mathcal{A}$. Then, $A^+$ is the event that there are the two open arms, $A^-$ the two closed arms, with $\tilde{A}^+$ and $\tilde{A}^-$ signifying the crossings in $\mathcal{A}^+$ and $\mathcal{A}^-$ respectively. Then, generalized FKG states we have
	\begin{equation*}
		\P(\tilde{A}^+ \cap \tilde{A}^-\cap A^+ \cap A^-) \geq \P(\tilde{A}^+)\P(\tilde{A}^-) \P( A^+ \cap A^-).
	\end{equation*}
	By \eqref{eqn: RSW}, one has $\P(\tilde{A}^+)\geq \delta >0$ and $\P(\tilde{A}^-)\geq \delta' >0$. In this example, we need additional arm separation conditions developed in \cite{Kesten87} to ensure that the arms connect at the boundary. It is well-known that the separated arm probabilities are on the same order as regular arm probabilities, see \cite[Theorem 11]{Nolin08}. For these versions of the events with an extra separation condition, we have:
	\begin{equation*}
	\begin{split}
	    \P(A_{2,OC}(N)) 
	    &\geq c\P(A_{2,OC}^{\mathrm{sep}}(n),A_{2,OC}^{\mathrm{sep}}((1+\kappa)n,N)) \\
	    &\ge c\P(A_{2,OC}^{\mathrm{sep}}(n)) \P(A_{2,OC}^{\mathrm{sep}}((1+\kappa)n,N))\\ 
	    &\ge c\P(A_{2,OC}^{\mathrm{sep}}(n))\P(A_{2,OC}^{\mathrm{sep}}(n,N)),
	\end{split}
    \end{equation*}
    where the generalized FKG inequality is applied to both the first and second lines and Proposition \ref{prop: smoothness} is applied to the third line. We note that $c$ is uniform in $n$. Therefore,
    \begin{equation*}
        \P(A_{2,OC}(N)) \ge c \P(A_{2,OC}(n))\P(A_{2,OC}(n,N)).
    \end{equation*}

\begin{figure}[hpt]
	\centering
	\includegraphics[width=0.45\textwidth]{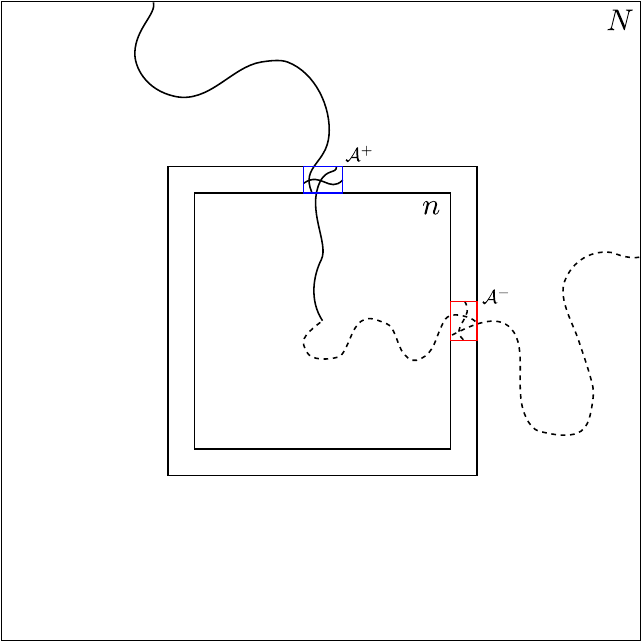}
	\caption{An example of a gluing construction using the generalized FKG and RSW estimates.
	}
	\label{fig: drawing13}
\end{figure}

\subsection{Acknowledgements} We wish to thank Michael Damron and Jack Hanson, with whom some of the ideas appearing here were developed and recorded in the unpublished note \cite{DHS16}. We also thank the anonymous reviewer for their careful reading of the manuscript.
        
\section{A three-arm path to $\partial B_n$}\label{sec: 3-arm}
	The first step towards proving our main result is to find a replacement for the lowest path in \cite{DHS17}. Here we use ideas in the (unpublished) note \cite{DHS16}. Note however that, although the argument in \cite{DHS16} is presented on the square lattice $\Z^2$, it relies on an inequality of Beffara and Nolin \cite{BN11} comparing monochromatic and polychromatic arm events. This is expected to hold on $\Z^2$, but is so far known only on the triangular lattice. The modified argument we give here holds unconditionally for the square lattice.
	
	\subsection{Definition of $\gamma$}\label{sec: gamma-def} We start by defining the central object $\gamma$, a path from the origin to $\partial B_n$. On the event $\{0\leftrightarrow \partial B_n\}$, let $C_0$ be the event that there is an open circuit around the origin in $B_n$. The definition of $\gamma$ and our estimate for $S_n$ will depend on whether $C_0$ occurs. 
	
	 Let us first fix a deterministic ordering on all edges in $B_n$. On $\{0\leftrightarrow \partial B_n\} \cap C_0^c$, there exists a closed dual arm from the origin to the boundary. Then, let $\mathfrak{c}$ be the first such path in the lexicographical order of paths viewed as sequences of edges. We let $\gamma$ be the open arm from the origin to $\partial B_n$ closest to the counterclockwise side of $\mathfrak{c}$. Here, ``closest'' is measured by the number of edges (or area) between $\gamma$ and $\mathfrak{c}$. 
	 
	On the event $C_0$, we construct an open path $\gamma$ from $0$ to $\partial B_n$ based on an idea in \cite[Section 2.3]{DHS16}. We first need a number of definitions. 
		
	We begin by enumerating the successive innermost open circuits around the origin. We denote the number of such circuits by $\mathcal{K}$. For $1\le m\le \mathcal{K}$, we denote by $\mathcal{C}_m$ the $m$-th innermost open circuit.
		
	By definition of $\mathcal{C}_1$, there is a closed dual path $\mathfrak{c}_1$ from a dual neighbor of the origin to the endpoint of the dual edge of an edge of the inner-most circuit $\mathcal{C}_1$. On $\{0\leftrightarrow \partial B_n\}$, there is also an open path from the origin to $\mathcal{C}_1$. We let $\sigma_1$ be the open path from the origin to $\mathcal{C}_1$ that is closest to the counterclockwise side of $\mathfrak{c}_1$. Similarly, for $m=2,\ldots,\mathcal{K}$, there is a closed dual path inside the region bounded by $\mathcal{C}_m$ but outside $\mathcal{C}_{m-1}$ joining the endpoint of a dual edge to $\mathcal{C}_{m-1}$ to the endpoint of a dual edge to $\mathcal{C}_m$. We let $\mathfrak{c}_m$ be the first such dual path, and $\sigma_m$ be the open path connecting $\mathcal{C}_{m-1}$ to $\mathcal{C}_m$ that is closest to the counterclockwise side of $\mathfrak{c}_m$. Finally, by the definition of $\mathcal{K}$, there necessarily exists a dual path joining a dual edge to $\mathcal{C}_{\mathcal{K}}$ to $\partial B_n$. We let $\mathfrak{c}_{\mathcal{K}+1}$ be the first such dual path. We define $\sigma_{\mathcal{K}+1}$ to be the open path joining $\mathcal{C}_{\mathcal{K}}$ to $\partial B_n$ that is closest to the counterclockwise side of $\mathfrak{c}_{\mathcal{K}+1}$.
		
	The definition of the path $\gamma$ from $0$ to $\partial B_n$ on the event $C_0$ is now as follows. The initial segment of $\gamma$ is $\sigma_1$. From the endpoint of $\sigma_1$ on $\mathcal{C}_1$, we follow the circuit $\mathcal{C}_1$ in the counterclockwise direction to the endpoint of $\sigma_2$ on $\mathcal{C}_1$. Then, we repeat this procedure for $m=2, \ldots,\mathcal{K}$, concatenating $\sigma_m$ with the open subpath $\tilde{\mathcal{C}}_m$ of $\mathcal{C}_m$ obtained by following this circuit in the counterclockwise direction, joining the endpoint of $\sigma_m$ belonging to $\mathcal{C}_m$ to the endpoint $\sigma_{m+1}$ on $\mathcal{C}_{m}$. Finally, we add $\sigma_{\mathcal{K}+1}$ to the path $\gamma$. 
	The main result of this section is the following:
	\begin{Lemma} \label{lemma:DHS16}
		There is a constant $C > 0$ independent of $n$ such that
		\begin{equation*}
		\E[\# \gamma \mid 0\leftrightarrow \partial B_n] \leq Cn^2\pi_3(n).
		\end{equation*}
	\end{Lemma}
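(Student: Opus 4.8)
The plan is to build, on the event $\{0 \leftrightarrow \partial B_n\}$, an explicit open path $\gamma$ from the origin to $\partial B_n$ whose expected length is $O(n^2 \pi_3(n))$, and then take $S_n \le \#\gamma$. Since $S_n$ is only defined on $\{0 \leftrightarrow \partial B_n\}$, it suffices to produce such a $\gamma$ pathwise on this event and estimate $\mathbb{E}[\#\gamma \cdot \mathbf{1}_{0\leftrightarrow\partial B_n}]$ directly, then divide by $\mathbb{P}(0 \leftrightarrow \partial B_n)$ at the end using the (standard) four-arm–type lower bound relating $\mathbb{P}(0\leftrightarrow\partial B_n)$ to $n^2\pi_3(n)$ — actually the cleaner route is to bound $\mathbb{E}[\#\gamma \mid 0\leftrightarrow\partial B_n]$ directly, so I will condition throughout.

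First I would set up the dyadic scales $n_j = 2^j$, $0 \le j \le \log_2 n$, and on each annulus $B(n_j, n_{j+1})$ consider the \emph{innermost open circuit} $\mathcal{C}_j$ surrounding the origin inside that annulus, when it exists. On $\{0 \leftrightarrow \partial B_n\}$ the origin connects out to $\partial B_n$, but between consecutive such circuits the connection is ``pinched'': the key combinatorial observation (from \cite{DHS16}) is that on the open cluster of the origin, any edge lying on the connecting path that is ``between'' $\mathcal{C}_j$ and $\mathcal{C}_{j+1}$ and is used by the innermost connection is a \emph{three-arm point} — it has two disjoint open arms (going each way along the connection) and, because the circuit $\mathcal{C}_j$ is \emph{innermost}, a closed dual arm reaching inward. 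So the strategy is: extract $\gamma$ as the concatenation of (i) a piece of each innermost circuit $\mathcal{C}_j$, and (ii) three-arm open connections joining $\mathcal{C}_j$ to $\mathcal{C}_{j+1}$, chosen so that every edge of the joining pieces is a three-arm point at its own scale. The innermost circuits themselves also consist of three-arm points (an edge on an innermost open circuit in an annulus has two open arms along the circuit and a closed dual arm to the inside).

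The main estimate is then the volume bound. For an edge $e$ at distance roughly $2^j$ from the origin to lie on the portion of $\gamma$ inside $B(2^{j-1}, 2^{j+2})$, it must be a three-arm point on scale $2^{j-2}$ or so: it must have two disjoint open arms and one closed dual arm from $e$ out to distance $\sim 2^j$, AND the relevant circuit/connection must propagate from that local picture out to $\partial B_n$ and in to the origin. Crucially, the ``in to the origin'' part costs another factor: $e$ must also connect (appropriately) down to distance $O(1)$, but by quasi-multiplicativity (Proposition \ref{prop: smoothness}) and the fact that $\pi_3$ and $\pi_1$ arm events away from $e$ on disjoint annuli factor, one gets that $\mathbb{P}(e \in \gamma,\ 0\leftrightarrow\partial B_n) \le C \pi_3(2^j) \cdot (\text{stuff independent of which } e)$. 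Summing over the $O((2^j)^2)$ edges at scale $2^j$ and over the $O(\log n)$ scales, and using $\sum_j (2^j)^2 \pi_3(2^j) \le C n^2 \pi_3(n)$ — which follows because $2^{-2j}/\pi_3(2^j)$ is, up to constants, increasing (three-arm exponent $2/3 < 2$), so the sum is dominated by its last term — gives $\mathbb{E}[\#\gamma \cdot \mathbf{1}_{0\leftrightarrow\partial B_n}] \le C n^2 \pi_3(n) \cdot \mathbb{P}(0 \leftrightarrow \partial B_n)$ after one more application of quasi-multiplicativity to absorb the ``connect to origin'' factor into $\mathbb{P}(0\leftrightarrow\partial B_n)$. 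Dividing by $\mathbb{P}(0\leftrightarrow\partial B_n)$ yields the claim.

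The step I expect to be the main obstacle is the rigorous verification that $\gamma$ as constructed is actually a connected open path from $0$ to $\partial B_n$, and simultaneously that \emph{every} edge of $\gamma$ genuinely carries the three-arm event at the right scale. The delicate point is matching up the innermost circuits across scales: one must argue that the innermost circuit in annulus $B(2^j, 2^{j+1})$ is connected, within the open cluster, to the innermost circuit in $B(2^{j+1}, 2^{j+2})$ by an open path all of whose edges see a closed dual arm inward — this uses the innermost property and a careful exploration argument (reveal $\mathcal{C}_j$ from outside, so its interior is unexplored and the inward closed arm is genuine). Handling the boundary cases (the innermost circuit may fail to exist in a given annulus; the origin's cluster may not reach some scale on the bad event's complement) and stitching the combinatorics cleanly — essentially repairing and completing the argument of \cite{DHS16} without invoking Beffara–Nolin — is where the real work lies. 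Once the pathwise construction is in place, the probabilistic estimate is a routine quasi-multiplicativity and geometric-sum computation.
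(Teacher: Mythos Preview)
Your overall strategy --- construct a path $\gamma$ of three-arm edges and bound $S_n \le \#\gamma$ --- matches the paper, and the summation via quasi-multiplicativity and $\sum_l l\pi_3(l) \le C n^2\pi_3(n)$ is also the same. But there is a genuine gap in your central claim that every edge of $\gamma$ at distance $\sim 2^j$ from the origin is a three-arm point at scale $\sim 2^j$. Take an edge $e$ on your innermost open circuit $\mathcal{C}_j$ in the dyadic annulus $B(2^j,2^{j+1})$. The two open arms along the circuit are fine, but the closed dual arm guaranteed by ``innermost'' only reaches $\partial B(2^j)$, i.e.\ to distance $\mathrm{dist}(e,\partial B(2^j))$, which can be $O(1)$ when $e$ sits near the inner boundary. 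Nothing in your construction extends that closed arm further inward, since the innermost property is only relative to the annulus. The same defect appears for the connectors between consecutive circuits. So the bound $\P(e\in\gamma)\lesssim \pi_3(2^j)$ on which your volume estimate rests is unjustified, and this is not the ``stitching'' issue you flagged as the main obstacle.

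The paper handles this with a different construction and an additional idea you are missing. It does not use dyadic annuli: on the event that a circuit exists it takes the \emph{successive actual innermost circuits} $\mathcal{C}_1,\ldots,\mathcal{C}_{\mathcal{K}}$ around $0$ and connectors $\sigma_m$ between them. For $e\in\mathcal{C}_m$ (or $\sigma_m$) the closed dual arm only reaches $\mathcal{C}_{m-1}$, which may be close; but if it is, then $\mathcal{C}_{m-1}$ contributes two \emph{extra} open arms from $e$ out to $\mathcal{C}_{m-2}$, and so on. Lemmas~\ref{lemma:A_3} and~\ref{lemma:A_4} formalize this trade-off: there exist scales $0=l_0<l_1\le\cdots\le l_R\le \lfloor\log M(e)\rfloor$ with $2i$ open arms and one closed arm across the $i$-th shell, and $2R+2$ monochromatic open arms beyond $l_R$. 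Choosing $R$ so that $\pi'_{2R+2}(n)\le n^{-\epsilon}\pi_3(n)$ and using Reimer's inequality to peel off the surplus open arms, the sum over $(l_1,\ldots,l_R)$ collapses to $C\pi_3(M(e))$. This multi-scale ``arms accumulate when the closed arm is short'' argument is exactly what replaces the Beffara--Nolin input, and it is the step your proposal does not supply.
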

	Clearly, we then have
	\begin{equation}\label{eqn: simple-comp}
	S_n\le \# \gamma,
	\end{equation}
    from which it follows that the distance $S_n$ is $O(n^2\pi_3(n))$ in expectation.
	In our construction, we frequently refer to the following quantity $M=M(e)$, defined for each edge $e$ inside $B_n$ by
	\begin{equation} \label{eqn: M-def}
	M(e) := \min(\mathrm{dist}(e,0), \mathrm{dist}(e, \partial B_n)).
	\end{equation}
    The proof of  Lemma \ref{lemma:DHS16} occupies the remainder of Section \ref{sec: 3-arm}.
		
		\subsection{Estimate on the event $C_0^c$.}\label{sec: C0c} %%In particular, the following lemma is proved in \cite{DHS16}.
		
		By duality, for each edge $e\in \gamma$, there is a closed dual path from an endpoint of $e^*$ to  $\mathfrak{c}$. Thus, for each edge $e\in \gamma$, there are two open arms and a closed dual arm from $e$ to distance $M(e)$. The open arms are obtained by following $\gamma$ from $e$ in either direction, and the closed dual arm is obtained by following the closed dual path from $e^*$ to $\mathfrak{c}$, and then following $\mathfrak{c}$ to the origin or $\partial B_n$. Combining this fact with \eqref{eqn: simple-comp}, we obtain:
		
		\begin{equation}\label{eqn: sum-over-M}
		\begin{split}
		\E[\#\gamma , C_0^c \mid 0\leftrightarrow \partial B_n]
		\leq C \sum_{k=1}^{\lfloor n/2\rfloor} \sum_{e:\ M(e)=k} \P(A_3(e,k)\mid 0\leftrightarrow \partial B_n).
		\end{split}
		\end{equation}
		
		Let $d=\mathrm{dist}(0,e)$. If in the inner sum $d < 2k$, $e\in B(2n/3)$. By independence, we have
		\begin{equation}\label{eqn: quasi-mult}
		\P(A_3(e,k) \mid 0\leftrightarrow \partial B_n)\le \frac{\P(0\leftrightarrow \partial B(d/2)) \P(A_3(e, k/2))\P(\partial B(3d/2) \leftrightarrow \partial B_n)}{\mathbb{P}(0\leftrightarrow \partial B_n)}.
		\end{equation}
		By quasi-multiplicativity, we have for some constant $C>0$,
		\begin{equation}\label{eqn: quasi-mult-2}
		\P(0\leftrightarrow \partial B_n)\geq C\P(0\leftrightarrow \partial B(d/2)) \P(\partial B(3d/2) \leftrightarrow \partial B_n).
		\end{equation}
		
		If $d\ge 2k$, then again by independence, we have
		\begin{equation} \label{eqn:indep-case-b}
		    \P(A_3(e,k) \mid 0\leftrightarrow \partial B_n) \le \frac{\P(0\leftrightarrow \partial B(n/2)) \P(A_3(e,k/2))}{\P(0\leftrightarrow \partial B_n)}.
		\end{equation}
		Similarly, by Proposition \ref{prop: smoothness}, we have for some constant $C>0$,
		\begin{equation} \label{eqn:quasi-mult-3}
		    \P(0\leftrightarrow \partial B_n) \ge C\P(0\leftrightarrow \partial B(n/2)).
		\end{equation}
		
		Using \eqref{eqn: quasi-mult}, \eqref{eqn: quasi-mult-2}, \eqref{eqn:indep-case-b}, and \eqref{eqn:quasi-mult-3} to estimate \eqref{eqn: sum-over-M}, we have 
		\begin{equation}\label{eqn: before-smooth}
		\begin{split}
		\E[\# \gamma, C_0^c \mid 0\leftrightarrow \partial B_n] &\leq C\sum_{k=1}^{\lfloor n/2\rfloor}\sum_{e:M(e)=k}\mathbb{P}(A_3(e,k/2)) \\
		&\leq C \sum_{k=1}^{\lfloor n/2\rfloor} k\pi_3(k).
		\end{split}
		\end{equation}
		To bound the final quantity, we use the following result, proved for example in \cite[Eqn. (7)]{Kesten86} and \cite[Proposition 16]{DHS17}.
		
		\begin{Prop} \label{prop:smoothness}
		There exists $C>0$ such that for all $L \in \Z^+$,
			\begin{equation*}
			\sum_{l=1}^L l\pi_3(l) \leq CL^2\pi_3(L).
			\end{equation*}
		\end{Prop}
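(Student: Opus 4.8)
The plan is to convert the sum into a geometrically summable one by combining quasi-multiplicativity of the polychromatic three-arm event with an a priori polynomial lower bound for three-arm probabilities in annuli.

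First I would apply the lower-bound half of quasi-multiplicativity, inequality \eqref{eqn: smoothness} of Proposition \ref{prop: smoothness} (equivalently \cite[Proposition 12.2]{Nolin08}), with inner radius $n_0(3)$, middle radius $l$, and outer radius $L$, to get
\[
\pi_3(L)\ \ge\ c\,\pi_3(l)\,\pi_3(l,L),\qquad n_0(3)\le l\le L.
\]
The at most two terms of the sum with $l<n_0(3)$ contribute only an additive constant, which is absorbed into $CL^2\pi_3(L)$ since $\pi_3(L)\ge cL^{-2}$ (indeed $\pi_3(L)\ge\P(A_{5,OOCOC}(L))\asymp L^{-2}$) and the asserted bound is trivial for bounded $L$ after enlarging $C$. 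Hence
\[
\sum_{l=1}^{L} l\,\pi_3(l)\ \le\ C\,\pi_3(L)\sum_{l=1}^{L}\frac{l}{\pi_3(l,L)},
\]
and it is enough to show $\sum_{l=1}^{L} l/\pi_3(l,L)\le C L^2$.

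The second ingredient is the a priori estimate that there exist $c>0$ and $\eps_0>0$ (necessarily $\eps_0<2$) such that
\[
\pi_3(l,L)\ \ge\ c\Big(\tfrac{l}{L}\Big)^{2-\eps_0},\qquad n_0(3)\le l\le L.
\]
Granting this, since $\eps_0-1>-1$ we have $\sum_{l=1}^{L}l^{\,\eps_0-1}\le C\,L^{\eps_0}$, so
\[
\sum_{l=1}^{L}\frac{l}{\pi_3(l,L)}\ \le\ \frac1c\,L^{\,2-\eps_0}\sum_{l=1}^{L}l^{\,\eps_0-1}\ \le\ C\,L^{\,2-\eps_0}L^{\eps_0}\ =\ C L^2,
\]
which together with the previous display gives the proposition. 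A polynomial lower bound $\pi_3(l,L)\ge c(l/L)^{\kappa_0}$ of \emph{some} finite order $\kappa_0$ is elementary: chain RSW and FKG across the $\asymp\log_2(L/l)$ dyadic sub-annuli separating $l$ from $L$ (each one-scale factor is bounded below by a fixed constant $c_0$ by a gluing construction of the type pictured in Figure \ref{fig: drawing13}), and paste the scales together by quasi-multiplicativity, which yields $\kappa_0=\log_2\!\big(1/(cc_0)\big)$.

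The one substantive point — and the main obstacle — is the strict improvement to $\kappa_0<2$, i.e.\ $\eps_0>0$: a bare RSW/FKG iteration does not pin down $\kappa_0$, and with only $\kappa_0=2$ the computation above produces a spurious factor $\log L$. Equivalently, one must know that $L^2\pi_3(L)$ is nondecreasing up to a multiplicative constant, or that the full-plane polychromatic three-arm exponent lies strictly below the (universal) five-arm exponent $2$. This strict inequality holds on $\Z^2$ unconditionally and is precisely what is recorded in \cite[Eqn. (7)]{Kesten86} and \cite[Proposition 16]{DHS17}; once it is available, the remainder of the argument is bookkeeping.
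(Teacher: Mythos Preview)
The paper does not supply its own proof of this proposition: it simply cites \cite[Eqn.~(7)]{Kesten86} and \cite[Proposition~16]{DHS17} and moves on. Your argument is correct and is essentially the proof one finds behind those citations. Quasi-multiplicativity converts the sum into $\pi_3(L)\sum_l l/\pi_3(l,L)$, and the a~priori bound $\pi_3(l,L)\ge c(l/L)^{2-\eps_0}$ with $\eps_0>0$ renders the remaining sum $O(L^2)$; you correctly isolate the only substantive ingredient, namely the strict inequality $\eps_0>0$ (equivalently, that $L^2\pi_3(L)$ is quasi-increasing, i.e.\ that the polychromatic three-arm exponent lies strictly below the universal five-arm exponent $2$). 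One small quibble on attribution: those two references state the \emph{sum inequality} itself rather than the exponent bound as a standalone lemma, so your closing sentence slightly misdescribes what is ``recorded'' there --- but the exponent bound is exactly the mechanism of their proofs, so the reduction is faithful.
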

		Applying this to \eqref{eqn: before-smooth}, we immediately obtain the desired bound
		\[\E[\# \gamma, C_0^c \mid 0\leftrightarrow \partial B_n]  \leq Cn^2\pi_3(n).\]	
		
		\subsection{Estimate on the event $C_0$.}\label{sec: C0}
		We estimate the expected volume $\mathbb{E}[\#\gamma\mid 0\leftrightarrow \partial B_n]$. Proceeding as in the previous case $C_0^c$ (see \eqref{eqn: sum-over-M}), it suffices to estimate the conditional probability that an edge $e\in B_n$ belongs to any portion of $\gamma$.
	
		\subsubsection{The initial path $\sigma_1$} 
		If $e\in \sigma_1$, then by duality there is a closed dual path from a dual neighbor of $e$ to the closed dual path $\mathfrak{c}_1$, which then follows $\mathfrak{c}_1$ to a dual neighbor of the origin. There are also two open arms from $e$ to the origin and to $\mathcal{C}_1$ respectively. All three arms reach to distance at least $d=\mathrm{dist}(e,0)$. Recalling the notation \eqref{eqn: M-def}, we have the estimate:
		\begin{equation}\label{eqn: rafaela}
		\P(e \in \sigma_1 \mid 0\leftrightarrow \partial B_n) \le  \P(A_3(e, d)) \le  C \pi_3(M(e)).
		\end{equation}
		
		\subsubsection{The final path $\sigma_{\mathcal{K}+1}$}
		
		For an edge $e \in \sigma_{\mathcal{K}+1}$, by duality there is a closed dual path from a  dual neighbor of $e$ to the closed dual path $\mathfrak{c}_{\mathcal{K}+1}$, which then follows $\mathfrak{c}_{\mathcal{K}+1}$ to $\partial B_n$. There are also two open arms from $e$ to $\partial B_n$ and $\mathcal{C}_\mathcal{K}$ respectively. As in the previous case, from $e$, there are three arms to distance at least $M(e)$ and we obtain the bound 
		\begin{equation}\label{eqn: gabriela}
		\P(e \in \sigma_{\mathcal{K}+1} \mid 0\leftrightarrow \partial B_n) \leq C \pi_3(M(e)).
		\end{equation}
		
		\subsubsection{The intermediate paths $\sigma_m$ for $m=2,\dots,\mathcal{K}$} \label{sec: intermediate}
		In this case, we use a modified version of Lemma $11$ from \cite{DHS16}.
		\begin{Lemma}\label{lemma:A_3}
			Suppose $e \in \sigma_m$ for $2 \le m\leq \mathcal{K}$. Fix $\epsilon>0$ and an integer $R$ such that for any $0\le n_1<n_2$,
			\begin{equation} \label{eqn:stopcondition}
				\pi_{2R+2}'(n_1,n_2)\leq \pi_3(n_1,n_2) (n_1/n_2)^{\epsilon}.
			\end{equation} 
			There are $0=\ell_0 < \ell_1\leq \dots \leq \ell_{R} \leq \lfloor \log M(e) \rfloor$ such that
			\begin{enumerate}
				\item There are two disjoint open arms and one closed dual arm from $e$ to $\partial B(e,2^{\ell_1-1})$.
				\item For $i\geq 2$, if $\ell_{i-1} < \lfloor\log M(e)\rfloor$, there are $2i$ open arms and one closed dual arm from $\partial B(e, 2^{\ell_{i-1}})$ to $\partial B(e, 2^{\ell_i-1})$, all of which are disjoint.
				\item If $\ell_{R} < \lfloor \log M(e)\rfloor$, there are $2R+2$ disjoint open arms from $\partial B(e, 2^{\ell_{R}})$ to $\partial B(e, M(e))$.
			\end{enumerate}
		\end{Lemma}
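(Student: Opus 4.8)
The claim is purely geometric: on $\{0\leftrightarrow\partial B_n\}\cap C_0\cap\{e\in\sigma_m\}$ one must \emph{produce} scales $0=l_0<l_1\le\cdots\le l_R$ with the stated properties, and I would do this by a greedy construction that grows a box around $e$ and records the radii at which it absorbs successive pieces of the configuration. The single geometric input is this: each of the circuits $\mathcal{C}_1,\dots,\mathcal{C}_{\mathcal{K}}$ surrounds the origin, and since $e$ lies between $\mathcal{C}_{m-1}$ and $\mathcal{C}_m$ it is exterior to $\mathcal{C}_1,\dots,\mathcal{C}_{m-1}$ and interior to $\mathcal{C}_m,\dots,\mathcal{C}_{\mathcal{K}}$; because $M(e)\le\mathrm{dist}(e,0)$ the origin lies on or outside $\partial B(e,M(e))$, so \emph{no such circuit can be contained in $B(e,M(e))$}. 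Hence if $\mathcal{C}_j$ meets $B(e,\rho)$ for some $\rho\le M(e)$ then, being a loop with points inside $B(e,\rho)$ and outside $B(e,\rho')$ for every $\rho'\in(\rho,M(e)]$, it crosses the annulus $B(e,\rho,\rho')$ at least twice, contributing two disjoint open arms there. Note also that the nearest circuit to $e$ must be $\mathcal{C}_{m-1}$ or $\mathcal{C}_m$, since any path from $e$ to a circuit with index $\le m-2$ (resp. $\ge m+1$) must first cross $\mathcal{C}_{m-1}$ (resp. $\mathcal{C}_m$).

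For the base case, the three arms at $e$ are: the two disjoint open arms obtained by following $\sigma_m$ from $e$ in its two directions, which reach $\mathcal{C}_{m-1}$ and $\mathcal{C}_m$ respectively; and the closed dual arm obtained from the closed dual connection joining $e^*$ to $\mathfrak{c}_m$ and then following $\mathfrak{c}_m$, which is a closed connected dual set containing points of both $\mathcal{C}_{m-1}$ and $\mathcal{C}_m$. Each of these three arms therefore reaches distance at least $\mathrm{dist}(e,\mathcal{C}_{m-1}\cup\mathcal{C}_m)$ from $e$, so taking $2^{l_1-1}$ to be the largest power of two that is both $<\mathrm{dist}(e,\mathcal{C}_{m-1}\cup\mathcal{C}_m)$ and $\le 2^{\lfloor\log M(e)\rfloor}$ yields (1); and if that forces $l_1=\lfloor\log M(e)\rfloor$, we set $l_2=\cdots=l_R=l_1$ and conditions (2)--(3) are vacuous.

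For the inductive step, suppose $l_1\le\cdots\le l_{i-1}$ are defined with $l_{i-1}<\lfloor\log M(e)\rfloor$. The plan is to choose $2^{l_i}$ to be the least power of two --- capped at $2^{\lfloor\log M(e)\rfloor}$ --- for which $B(e,2^{l_i})$ absorbs the next ``layer'' of the configuration encountered as one travels along $\gamma$ away from $e$: the next circuit $\mathcal{C}_j$ together with the attachment point on it of the segment of $\gamma$ leaving towards the layer beyond. Absorbing such a layer turns one arm crossing the previous annulus (the segment of $\gamma$ heading into it) into three: the two forced crossings of $\mathcal{C}_j$ from the geometric input above, plus the crossing of the $\gamma$-segment heading into the next layer; this is the mechanism meant to bump the open-arm count from $2(i-1)$ to $2i$ on $B(e,2^{l_{i-1}},2^{l_i})$. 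The closed dual arm on this annulus is to be supplied by the closed dual path running parallel to $\gamma$ --- built from $\mathfrak{c}_m$ and, once the inner or outer circuits begin to be absorbed, from the successive $\mathfrak{c}_j$ --- alongside the $\gamma$-segment that crosses the annulus. Pairwise disjointness of these arms would come from the disjointness of the circuits $\mathcal{C}_j$, from the $\gamma$-segments lying strictly between consecutive circuits, and from each $\mathfrak{c}_j$ separating the surviving open $\gamma$-segment from the closed arm. Iterating either until $M(e)$ is reached or until $R$ scales have been defined gives (2), and in the latter case the remaining $\ge 2R+2$ forced open crossings on $B(e,2^{l_R},M(e))$, with the closed arm now exhausted since both adjacent circuits have been absorbed, give (3).

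The delicate part, which I expect to be the main obstacle, is the bookkeeping in the inductive step: calibrating the scales $l_i$ so that the open count is \emph{exactly} $2i$ (handling in particular the case where $\mathcal{C}_{m-1}$ and $\mathcal{C}_m$ lie at very different distances from $e$, so that the three-arm regime between absorbing the first and the second of them must be folded into the definition of the $l_i$), proving that the several circuit-arcs, the $\gamma$-remnant and the closed dual path can all be taken pairwise disjoint, and --- the subtlest point --- guaranteeing that a closed dual arm really does cross every one of the annuli $B(e,2^{l_{i-1}},2^{l_i})$ for $i\le R$: the closed dual path parallel to $\gamma$ is only obviously available until the box absorbs both $\mathcal{C}_{m-1}$ and $\mathcal{C}_m$, and carrying it further requires either controlling the positions of the paths $\mathfrak{c}_j$ relative to $e$ or replacing it by a closed arm produced from the absence of an open circuit around $e$ in the annulus in question. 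The constant $R$ itself enters only through the hypothesis \eqref{eqn:stopcondition} and plays no role in the construction.
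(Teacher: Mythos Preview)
Your greedy-box approach is the right idea and matches the paper's strategy in spirit, but you are making life harder than necessary by trying to grow the box in \emph{both} directions along $\gamma$ and by counting $\gamma$-segments among your open arms. The paper's construction is simpler: at scale $l_1$ one waits until the box $B(e,2^{l_1})$ contains \emph{both} endpoints $x\in\mathcal{C}_{m-1}$ and $y\in\mathcal{C}_m$ of $\sigma_m$ (not merely the nearer circuit), and thereafter one grows \emph{only inward}, defining $l_i$ as the first scale at which $B(e,2^{l_i})$ meets $\mathcal{C}_{m-i}$. At scale $l_{i-1}$ the box then meets the $i$ circuits $\mathcal{C}_{m-i+1},\dots,\mathcal{C}_m$, each of which contributes two disjoint open crossings of the annulus $B(e,2^{l_{i-1}},2^{l_i})$ by exactly your geometric input; no $\gamma$-segments are needed beyond the initial three arms in $B(e,2^{l_1-1})$.

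This one-sided choice dissolves precisely the obstacle you flag as ``the subtlest point''. Once you only step inward, the closed dual arm at every scale is automatic from the innermost-circuit property: since $\mathcal{C}_{m-i+1}$ is the innermost open circuit in the region outside $\mathcal{C}_{m-i}$, any edge of $\mathcal{C}_{m-i+1}$ lying in $B(e,2^{l_{i-1}})$ has a closed dual path from its dual to (a dual neighbor of) $\mathcal{C}_{m-i}$, and $\mathcal{C}_{m-i}$ lies outside $B(e,2^{l_i-1})$ by minimality of $l_i$. There is no need to control the locations of the $\mathfrak{c}_j$, no need for an ``absence of open circuit around $e$'' argument, and the disjointness bookkeeping reduces to the disjointness of the nested circuits themselves. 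Your two-sided scheme can be made to work, but the asymmetry between the three-arm interval $[l_1,l_?]$ before the second adjacent circuit is absorbed and the subsequent steps, together with tracking attachment points of $\sigma_{m\pm j}$ on the circuits, is exactly the complication the paper's inward-only choice is designed to avoid.
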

		
		\begin{proof}
			We first note that such an $R$ that satisfies \eqref{eqn:stopcondition} exists by the van den Berg-Kesten inequality and because $\pi_3(n_1, n_2)\geq (n_1/n_2)^{c}$ for some constant $c$. See \cite[Proposition 14]{Nolin08}. 
			
			We observe that $e \in \sigma_m$ lies between the two circuits $\mathcal{C}_{m-1}$ and $\mathcal{C}_m$. Let $x$ be the endpoint of $\sigma_m$ intersecting $\mathcal{C}_{m-1}$, and let $y$ be the other endpoint intersecting $\mathcal{C}_m$. Let $\ell''$ be the smallest $\ell'$ such that $2^{\ell'} \geq M$ or $B(e,2^{\ell'})$ contains both $x$ and $y$. 
			
			Suppose first that $2^{\ell''} \geq M$. In this case, we let $\ell_1 = \lfloor \log M\rfloor$. Subsequently, we let all $\ell_i = \lfloor \log M\rfloor$ for $i>1$. The two endpoints of $\sigma_m$ inside $B(e, 2^{\ell_1})$ form two open arms from $e$ to $\partial B(e,2^{\ell_1})$. The edge of $\sigma_m$ with endpoint $y$ has a dual edge connected by a dual closed path to $\mathfrak{c}_m$. Since the same is true of the edge $e^*$, we obtain a closed arm from $e^*$ which extends at least to distance $\mathrm{dist}(e, y) \geq 2^{\ell_1-1}$.
			 
			If $2^{\ell''} < M$, we let $\ell_1 = \ell''$. As in the previous case, we obtain two open arms and a dual closed arm from $e$ to distance $2^{\ell_1-1}$. In addition, both $x$ and $y$ are contained in $B(e,2^{\ell_1})$, and each lies on one of the open circuits $\mathcal{C}_{m-1}$ and $\mathcal{C}_{m}$ around $0$, whereas $0 \not\in B(e,2^{\ell_1})$ since $2^{\ell_1} < M$. 
			
			If $m\leq 2$, we let $\ell_2 = \lfloor \log M\rfloor$. Otherwise, if $2^{\ell_1}<M$, we let $\ell''$ be the least $\ell'\ge \ell_1$ such that $B(e, 2^{\ell'})$ intersects $\mathcal{C}_{m-2}$. If $2^{\ell''} \geq M$, we let $\ell_2 = \lfloor \log M\rfloor$, and if $2^{\ell''} < M$, we let $\ell_2 = l''$. In all three cases we have four open arms (following $\mathcal{C}_{m-1}$ and $\mathcal{C}_m$) and one closed dual arm (the endpoint $x$ has a dual edge connected by a dual closed path to a dual edge of $\mathcal{C}_{m-2}$) from $\partial B(e, 2^{\ell_1})$ to $\partial B(e, 2^{\ell_2-1})$.
			
			Inductively, unless $\ell_{i-1} = \lfloor \log M\rfloor$, we define $\ell_i$ to be the least such that $B(e, 2^{\ell_i})$ intersects $\mathcal{C}_{m-i}$. For each scale $\ell_i$, if $\ell_{i} < \lfloor \log M\rfloor$, the box $B(e, 2^{\ell_i})$ crosses one more circuit than $B(e, 2^{\ell_{i-1}})$, thus resulting in two more open arms. At scale $\ell_{R}$, one may not have a closed dual arm from $\partial B(e, 2^{\ell_{R}})$ to $\partial B(e, M(e))$, but there are $2R+2$ disjoint open arms since the box $B(e, 2^{\ell_{R}})$ crosses $R+1$ circuits, each resulting in two open arms. If for some $i$, $\ell_{i-1} = \lfloor \log M\rfloor$, we define all subsequent scales to be $\lfloor \log M\rfloor$. 
		\end{proof}
	On the event $\left\{e\in \big(\cup_{m=2}^\mathcal{K} \sigma_m\big)\right\}$, by Lemma \ref{lemma:A_3}, computations similar to \eqref{eqn: quasi-mult}, \eqref{eqn: quasi-mult-2},  and a union bound over $0=\ell_0 < \ell_1\leq \dots \leq \ell_{R} \leq \lfloor \log M(e) \rfloor$, we have
		\begin{equation}\label{eqn:adelaide}
			\P\big(e\in \cup_{m=2}^\mathcal{K} \sigma_m \mid 0\leftrightarrow \partial B_n \big) 
			\leq C\sum_{0=\ell_0 < \ell_1\leq \dots \leq \ell_{R}}^{\lfloor \log M(e) \rfloor} \prod_{i=1}^R \pi_{2i+1}(2^{\ell_{i-1}}, 2^{\ell_i-1}) \pi_{2R+2}'(2^{\ell_{R}},M).
		\end{equation}
		
		Using Reimer's inequality in the form
		\[\pi_{2i+1}(2^{\ell_{i-1}},2^{\ell_i-1})\le \pi_3(2^{\ell_{i-1}}, 2^{\ell_i-1})\pi_{2i-2}'(2^{\ell_{i-1}}, 2^{\ell_i-1}),\]
		Proposition \ref{prop: smoothness}, and \eqref{eqn:stopcondition} we have
		\begin{align} \label{eqn:adelaide-2}
			\sum_{0=\ell_0 < \ell_1\leq \dots \leq l_{R}}^{\lfloor \log M(e) \rfloor} &\prod_{i=1}^{R} \pi_{2i+1}(2^{\ell_{i-1}}, 2^{\ell_i-1})\pi_{2R+2}'(2^{\ell_{R}},M) \nonumber\\
			&\leq C'\sum_{0=\ell_0 < \ell_1\leq \dots \leq l_{R}}^{\lfloor \log M(e) \rfloor} \prod_{i=1}^R \pi_3(2^{\ell_{i-1}}, 2^{\ell_i}) \pi_3(2^{\ell_{R}}, M) \prod_{i=1}^{R} \pi_{2i-2}'(2^{\ell_{i-1}}, 2^{\ell_i}) \left(\frac{2^{\ell_{R}}}{M}\right)^\epsilon.
		\end{align}
		We are then able to use the standard gluing construction $R$ times between each pair of consecutive scales for the polychromatic three-arm event:
		\begin{equation*}
			\prod_{i=1}^R \pi_3(2^{\ell_{i-1}}, 2^{\ell_i}) \pi_3(2^{\ell_{R}}, M) \leq C^R \pi_3(M(e)).
		\end{equation*}
		For the monochromatic probabilities in \eqref{eqn:adelaide-2}, we have the following estimate:
		\begin{equation*}
			\pi_j'(n_1,n_2)\leq (\pi_1(n_1,n_2))^j \leq \left(\frac{n_1}{n_2}\right)^{\alpha j},\, \text{for some $\alpha >0$}.
		\end{equation*}
		Plugging this back into \eqref{eqn:adelaide-2}, we have
		\begin{align*}
			\eqref{eqn:adelaide-2}
			&\leq C'C^R\pi_3(M(e)) \sum_{0=\ell_0 < \ell_1\leq \dots \leq \ell_{R}}^{\lfloor \log M(e)\rfloor} \prod_{i=0}^{R-1} (2^\alpha)^{2i(\ell_i-\ell_{i+1})} \left(\frac{2^{\ell_{R}}}{M}\right)^\epsilon.
		\end{align*}
		For the summation, we use the following estimate inductively: for $x = 2^\beta$, $\beta>0$, we have
		\begin{equation*}
			\sum_{i\leq N} x^i \leq c x^{N}.
		\end{equation*}
		for some constant $c=c(\beta)$.
		Therefore 
		\begin{align*}
			\sum_{0=\ell_0 < \ell_1\leq \dots \leq l_{R}}^{\lfloor \log M(e) \rfloor} \prod_{i=1}^{R} \pi_{2i-2}'(2^{\ell_{i-1}}, 2^{\ell_i}) \left(\frac{2^{\ell_{R}}}{M}\right)^\epsilon
			&\leq \sum_{0=\ell_0 < \ell_1\leq \dots \leq l_{R}}^{\lfloor \log M(e) \rfloor} \prod_{i=1}^{R-1} (2^\alpha)^{2\ell_i} (2^\alpha)^{-2(R-1)\ell_{R}} \left(\frac{2^{\ell_{R}}}{M}\right)^\epsilon \\
			&\leq \sum_{\ell_{R} = 1}^{\lfloor \log M(e) \rfloor} C (2^\alpha)^{2(R-1)\ell_{R}} (2^\alpha)^{-2(R-1)\ell_{R}} \left(\frac{2^{\ell_{R}}}{M}\right)^\epsilon \\
			&\leq C 2^{\epsilon\log M} M^{-\epsilon} \le C.
		\end{align*}
		
		Inserting this estimate into \eqref{eqn:adelaide}, we obtain
		\begin{equation*}
		\P\big(e\in \cup_{m=2}^\mathcal{K} \sigma_m \mid 0\leftrightarrow \partial B_n\big) \leq C\pi_3(M(e)).
		\end{equation*}

		\subsubsection{The circuits $\mathcal{C}_m$ for $m=1,\dots, \mathcal{K}$} 
		For this estimate, we again use a modified lemma from \cite{DHS16}.
		\begin{Lemma}\label{lemma:A_4}
			Suppose $C_0$ occurs and $e\in \mathcal{C}_m$ for some $1\leq m \leq \mathcal{K}$. Letting $R$ be as in Lemma \ref{lemma:A_3}, there are $0=\ell_0 < \ell_1\leq \dots \leq \ell_{R} \leq \lfloor \log M(e)\rfloor$ such that:
			\begin{enumerate}
				\item $e$ has two disjoint open arms and one closed dual arm to $\partial B(e,2^{\ell_1-1})$.
				\item For $i\geq 2$, if $2^{\ell_{i-1}} < M$, there are $2i$ disjoint open arms and one closed dual arm from $\partial B (e,2^{\ell_{i-1}})$ to $\partial B(e,2^{\ell_i-1})$.
				\item If $\ell_{R} < \lfloor \log M(e)\rfloor$, there are $2R+2$ disjoint open arms from $\partial B(e, 2^{\ell_{R}})$ to $\partial B(e, M(e))$. 
			\end{enumerate}
		\end{Lemma}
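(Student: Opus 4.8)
The plan is to run the proof of Lemma~\ref{lemma:A_3} again, essentially unchanged, treating the present situation as the one in which $e$ already lies on one of the circuits rather than on a connecting path $\sigma_m$; formally, if one thinks of $\sigma_m$ as collapsed to the single edge $e$, the two lemmas have the same scale structure, with $\mathcal{C}_m$ playing the role $\sigma_m$ plays in Lemma~\ref{lemma:A_3} and the inner circuits $\mathcal{C}_{m-1},\mathcal{C}_{m-2},\dots$ entering one at a time as the scale grows.

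First I would record the arms issuing from $e$. Following $\mathcal{C}_m$ away from $e$ in its two directions yields two edge-disjoint open arms; since $\mathcal{C}_m$ surrounds the origin and lies in $B_n$, these reach $\partial B(e,r)$ for every radius up to $M(e)$ (otherwise $\mathcal{C}_m$ would be contained in $B(e,r)$, which is impossible once the box misses the origin). Because $\mathcal{C}_m$ is the innermost open circuit around the origin lying outside $\mathcal{C}_{m-1}$ (with the convention $\mathcal{C}_0=\{0\}$), a standard duality argument — the circuit analogue of the fact that every edge of the lowest crossing has a closed dual connection to the bottom of the box — gives a closed dual path from the inner endpoint of $e^*$ to $\mathcal{C}_{m-1}$, lying in the region between the two circuits; applied to $\mathcal{C}_{m-1},\mathcal{C}_{m-2},\dots$ in turn, the same fact produces a closed dual path from each $\mathcal{C}_{m-j}$ inward to $\mathcal{C}_{m-j-1}$. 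Finally each inner circuit $\mathcal{C}_{m-j}$ with $j\ge 1$ again surrounds the origin, so once a box $B(e,r)$ meets it, it crosses every larger box up to radius $M(e)$, contributing two more disjoint open arms.

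Next I would choose the scales exactly as in Lemma~\ref{lemma:A_3}: set $l_0=0$, and while $l_{i-1}<\lfloor\log M(e)\rfloor$ let $l_i$ be the least integer $l\le\lfloor\log M(e)\rfloor$ for which $2^l\ge M(e)$ or $B(e,2^l)$ meets $\mathcal{C}_{m-i}$, freezing all later scales at $\lfloor\log M(e)\rfloor$ once one of them attains that value. The three assertions then follow by counting arms in the dyadic annuli. In $B(e,2^{l_1-1})$ the two open arms along $\mathcal{C}_m$ and the closed dual path from $e^*$ — which reaches $\mathcal{C}_{m-1}$, at distance at least $2^{l_1-1}$ from $e$, or runs all the way to the origin when $l_1=\lfloor\log M(e)\rfloor$ — give (1). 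For $2\le i\le R$ with $2^{l_{i-1}}<M(e)$, the box $B(e,2^{l_{i-1}})$ already meets the $i$ circuits $\mathcal{C}_m,\dots,\mathcal{C}_{m-i+1}$, each of which crosses the annulus $B(e,2^{l_{i-1}},2^{l_i})$ twice, while the $i$th closed dual piece runs from $\mathcal{C}_{m-i+1}$ (met near radius $2^{l_{i-1}}$) to $\mathcal{C}_{m-i}$ (first met near radius $2^{l_i}$) and hence crosses that annulus as well; this yields $2i$ disjoint open arms and one closed dual arm, which is (2). If $l_R<\lfloor\log M(e)\rfloor$, then $B(e,2^{l_R})$ meets the $R+1$ circuits $\mathcal{C}_m,\dots,\mathcal{C}_{m-R}$, each crossing $B(e,2^{l_R},M(e))$ twice, for $2R+2$ disjoint open arms; no closed dual arm is claimed here, since the closed dual path leaving $\mathcal{C}_{m-R}$ only reaches $\mathcal{C}_{m-R-1}$, which need not lie past $\partial B(e,M(e))$ and may fail to exist altogether when $m-R\le 1$. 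That is (3).

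The difficulty is not conceptual but the bookkeeping already present in Lemma~\ref{lemma:A_3}: tracking which circuits — hence how many open arms — are present at each scale; turning ``$B(e,2^{l_i})$ first meets $\mathcal{C}_{m-i}$'' into honest arm events across the annuli $B(e,2^{l_{i-1}},2^{l_i})$ while absorbing the $O(1)$ losses; and verifying that the successive closed dual pieces genuinely traverse the annuli assigned to them, which relies on the fact that $\mathcal{C}_{m-i}$, being first met at scale $l_i$, lies entirely outside $B(e,2^{l_i-1})$, together with the duality fact above applied to the portion of each circuit near $e$. The degenerate configurations — $m=1$ (no inner circuits, the closed dual arm from $e^*$ running straight to the origin, so that one lands at once in the $2^{l_1}\ge M(e)$ case), several consecutive circuits so close that some $l_i$ coincide, or a box swallowing the origin before all $R$ scales have been used — are handled just as in Lemma~\ref{lemma:A_3} by collapsing the affected scales to $\lfloor\log M(e)\rfloor$.
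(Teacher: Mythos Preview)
Your proposal is correct and follows essentially the same approach as the paper's proof: identify the two open arms along $\mathcal{C}_m$, invoke duality to get a closed dual connection from $e^*$ inward through the successive circuits $\mathcal{C}_{m-1},\mathcal{C}_{m-2},\dots$, and define the scales $l_i$ by when the growing box around $e$ first meets $\mathcal{C}_{m-i}$, collapsing to $\lfloor\log M(e)\rfloor$ once that bound is reached. The paper chooses $l_1$ via the distance to the specific edge $e'\in\mathcal{C}_{m-1}$ reached by the closed dual path from $e^*$ (and similarly for later scales), whereas you use first intersection with the circuit; these differ by at most a factor of~$2$ and lead to the same arm counts, so the distinction is immaterial.
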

		
		\begin{proof}
			By duality, since $e \in \mathcal{C}_m$, there is a closed dual path in $\mathrm{int}(\mathcal{C}_m)$ connecting $e^*$ to a dual neighbor of $0$ if $m=1$ or to the dual of some edge $e' \in \mathcal{C}_{m-1}$ if $m>1$. Let $\ell_1$ be the minimum $\ell'$ such that $2^{\ell'} \geq \mathrm{dist}(e,e')$ if $m > 1$ (or $2^\ell \geq \mathrm{dist}(e,0)$ if $m= 1$). Then there are three arms, two open and one closed dual, from $e$ to $\partial B(e, 2^{\ell_1-1})$.
			
			If $2^{\ell_1} < M(e)$, since $e' \in B(e, 2^{\ell_1})$, we can find four open arms from $\partial B(e, 2^{\ell_1})$ to $\partial B(e, M(e))$ by following the circuits $\mathcal{C}_m$ and $\mathcal{C}_{m-1}$ from $e$ and $e'$ in both directions. We also have a closed dual arm from $e'$ to a dual neighbor of some edge $e''\in \mathcal{C}_{m-2}$ if $m>2$ or to the dual neighbor of $0$ if $m=2$. If $m>2$, we define $\ell_2$ to be the least $\ell_2\ge \ell_1$ such that $B(e, 2^{\ell_2})$ intersects $\mathcal{C}_{m-2}$. Then we have four disjoint open arms and one closed dual arm from $\partial B(e, 2^{\ell_1})$ to $\partial B(e, 2^{\ell_2-1})$. If $m=2$, we define $\ell_2 = \lfloor \log M(e)\rfloor$. There are four open arms from $\partial B(e, 2^{\ell_1})$ to $\partial B(e, M(e))$.
			
			Inductively, if $m> R$ and $\ell_{i-1} < \lfloor \log M(e) \rfloor$, we let $\ell_i$ be the least $\ell_i\ge \ell_{i-1}$ such that $B(e, 2^{\ell_i})$ intersects $\mathcal{C}_{m-i}$. Otherwise, if $\ell_{i-1} = \lfloor \log M(e)\rfloor$, we let $\ell_i = \cdots = \ell_{R} = \lfloor \log M(e)\rfloor$. If all $R$ scales $\ell_1, \dots, \ell_{R}$ are less than $\lfloor \log M(e)\rfloor$, at scale $\ell_{R}$, $B(e, 2^{\ell_{R}})$ crosses $R+1$ circuits. So there are $2R+2$ disjoint open arms from $\partial B(e, 2^{\ell_{R}})$ to $\partial B(e, M(e))$. If $m\leq  R$, then for all $i\geq m$, we define $\ell_i = \lfloor \log M(e)\rfloor$.
		\end{proof}
		
		On the event  $\{e\in \cup_{m=1}^\mathcal{K} \mathcal{C}_m\}$, by the same arguments used to bound \eqref{eqn:adelaide}, we obtain
		\begin{align}\label{eqn: rosa}
		\P\big(e\in \cup_{m=1}^\mathcal{K} \mathcal{C}_m \mid 0\leftrightarrow \partial B_n \big) 
		&\leq C\sum_{0=\ell_0 < \ell_1\leq \dots \leq \ell_{R}}^{\lfloor \log M(e) \rfloor} \prod_{i=1}^R \pi_{2i+1}(2^{\ell_{i-1}}, 2^{\ell_i-1}) \pi_{2R+2}'(2^{\ell_{R}},M) \nonumber\\
		&\leq C\pi_3(M(e)).
		\end{align}
		
		\subsubsection{Summation on $C_0$}
		As in the case of $C_0^c$, we bound the length of the path $\gamma$ defined at the beginning of this subsection.
		\begin{align*}
	    \E[\#\gamma , C_0 \mid 0\leftrightarrow \partial B_n] 
		&\leq \sum_{e\in B_n} \P\big(e\in\{\sigma_1, \sigma_{\mathcal{K}+1}\} \mid 0\leftrightarrow \partial B_n \big) \\
		&+ \sum_{e\in B_n} \P\big(e\in \cup_{m=2}^\mathcal{K} \sigma_m \mid 0\leftrightarrow \partial B_n\big) \\
		&+ \sum_{e\in B_n} \P\big(e\in \cup_{m=1}^\mathcal{K} \mathcal{C}_m \mid 0\leftrightarrow \partial B_n\big).
		\end{align*}	
		Using \eqref{eqn: rafaela}, \eqref{eqn: gabriela}, \eqref{eqn: rosa}, and summing over the values of $e$ and $M$ as in \eqref{eqn: sum-over-M}, we find
		\begin{equation*}
		\E[\#\gamma , C_0 \mid 0\leftrightarrow B_n] \leq Cn^2\pi_3(n).
		\end{equation*}
             
    \section{Proof of Theorem \ref{thm:main}} \label{section:proofofthm}
In this section, we combine the result in \cite{DHS21}, stated as Proposition \ref{prop:main}, with the estimate of Lemma \ref{lemma:DHS16} to prove Theorem \ref{thm:main}.

Let $j$ be a sufficiently large integer, and $0<\epsilon<1$ be a small parameter. In \cite{DHS21}, the authors define a sequence of events $E_j(e,\epsilon,\nu)$ such that
	\begin{itemize}
		\item each event depends only on the status of edges in the annulus
        \[Ann_j=e_x+B(2^j,2^{j+\lfloor \log \frac{1}{\epsilon}\rfloor})\]
        around $e$. Recall the vertex $e_x\in \mathbb{Z}^2$ is the lower left endpoint of the edge $e$;
		
		\item If $\sigma$ is any open path consisting of edges with three arms, two open and one dual closed extending to the outside of $Ann_j$  which includes $e$, then there exists an open path $r$ inside $Ann_j$ which is edge-disjoint from $\sigma$, but whose endpoints $u$ and $v$ lie on $\sigma$;
        
        \item Denoting by $\tau$ the portion of $\sigma$ between $u$ and $v$, we have $e\in \tau$;
        
        \item The number of edges in $r$ is at most $\nu$ times the number of edges in $\tau$. We say that $r$ is a $\nu$-shortcut around $e$:
        \[\#r \le \nu \cdot \# \tau .\]
	\end{itemize}
See \cite[Section 5 and Appendix A]{DHS21} for proofs of these statements.
The main result in \cite[Proposition 5.6]{DHS21} is that, for $0<\delta<1$ and for $\epsilon>0$ sufficiently small, there exist constants $c, \hat{c}>0$ such that
\begin{equation}\label{eqn: strict-main}
	\P\Big(\bigcap_{j= \lceil\frac{\delta}{8}\log n\rceil}^{\lfloor\frac{\delta}{4}\log n\rfloor} E_j(e, \epsilon, n^{-c})^c \mid A_3(e, n^{\delta/2})\Big) \\
	\leq 2^{-\hat{c}\frac{\delta\epsilon^4 \log n}{8\log (1/\epsilon)}}.
\end{equation}
The estimate \eqref{eqn: strict-main} implies that, conditional on the existence of 3 arms to distance $n^{\delta/2}$, there is a shortcut around $e$ which saves $n^{-c}$ edges with probability at least $1-n^{-\eta}$ for some small $\eta$. See \eqref{eqn: lara}. We will apply this to find shortcuts around the path $\gamma$ constructed in Section \ref{sec: 3-arm}.

Our main result follows from \eqref{eqn: strict-main} and the following Proposition, the analogue of Proposition 8 in \cite{DHS21}, with the lowest crossing of a box replaced by our path $\gamma$.
\begin{Prop} \label{prop:main}
	Let $e\in B_n$ and let $d\le M/4 := \frac14 \min(\mathrm{dist}(e,0),\mathrm{dist}(e,\partial B_n))$. There are choices of $c, r>0$  uniform in $d$ such that for $k\le d^{9/10}$ and any event  $E$  depending only on the status of edges in $B(e, k)$:
	\begin{equation*}
		\P(E \mid 0\leftrightarrow \partial B_n, e\in \gamma) \leq c\big(d^{-r}+\P(E \mid A_3(e,d))\big).
	\end{equation*}
\end{Prop}
Proposition \ref{prop:main} is proved in Section \ref{sec:improvements}.
\begin{proof}[Proof of Theorem \ref{thm:main}]	Choose $\delta>0$ small enough so that 
	\begin{equation}\label{eqn: adjustment}
		n^{1+2\delta} \le n^2\pi_3(n),
	\end{equation}
	and define the truncated box $\hat{B}(n) = B(n-n^\delta)\setminus B(n^\delta)$. This is possible because $\pi_3(n)\ge n^{-1+s}$ for some $s>1$; see \cite[Lemma 3.1]{DHS17} for details. 
	With our choice of $\delta>0$ and $j \in \left(\frac{\delta}{8}\log n , \frac{\delta}{4}\log n \right)$, we have, if $e\in \hat{B}(n)$:
	
	\begin{equation}\label{eqn: lara}
		\P(\text{there is no $n^{-c}$-shortcut around $e$} \mid e\in \gamma)
		\leq \P\Big(\bigcap_{j= \lceil\frac{\delta}{8}\log n\rceil}^{\lfloor\frac{\delta}{4}\log n\rfloor} E_j(e, \epsilon, n^{-c})^c \mid e\in \gamma\Big). 
		\end{equation}
	Applying Proposition \ref{prop:main} and using \eqref{eqn: strict-main}, the quantity is bounded by
	\begin{equation*}
	\begin{split}
		& cn^{-c'\delta r}+c\P\Big(\bigcap_{j= \lceil\frac{\delta}{8}\log n\rceil}^{\lfloor\frac{\delta}{4}\log n\rfloor} E_j(e, \epsilon, n^{-c})^c \mid A_3(e, n^{\delta/2})\Big) \\
		\le &~ cn^{-c'\delta r}+ 2^{-\hat{c}\frac{\delta\epsilon^4 \log n}{8\log (1/\epsilon)}}
		\end{split}
	\end{equation*}
	The last quantity is bounded by $n^{-\eta}$
	for some $\eta>0$.
	
	Along the path $\gamma$, we now choose a collection of (vertex)-disjoint $n^{-c}$-shortcuts $r_l$ such that the total length of the corresponding detoured paths $\tau_l$ is maximal. We define a path $s$ from $0$ to $\partial B_n$ by taking the union of all the shortcuts $r_l$, together with all the edges of $\gamma$ around which no shortcut exists.

    Partitioning the edges in $\gamma$ given $\{0\leftrightarrow \partial B_n\}$ into the truncated part of the box, the edges with $n^{-c}$-shortcuts, and the edges without shortcut, we estimate the expected size of $s$ as follows:
	\begin{align*}
		\E[\#s \mid 0\leftrightarrow \partial B_n] 
		&\leq Cn^{1+\delta} + n^{-c} \sum_{l} \E[\# \tau_l \mid 0\leftrightarrow \partial B_n] \\
		&+ \E[\#\{e\in \gamma \cap \hat{B}(n): \text{$e$ has no $n^{-c}$-shortcut}\} \mid 0\leftrightarrow \partial B_n] \\
		&\leq Cn^{1+\delta} + n^{-c}\E[\#\gamma \cap \hat{B}(n) \mid 0\leftrightarrow \partial B_n] + n^{-\eta}\E[\# \gamma \cap \hat{B}(n)\mid 0\leftrightarrow \partial B_n]\\
		&\le Cn^{1+\delta}+n^{-\min\{c,\eta\}}\mathbb{E}[\#\gamma\mid 0\leftrightarrow \partial B_n].
	\end{align*}
	By \eqref{eqn: adjustment} and Lemma \ref{lemma:DHS16}, we now have
	\begin{equation*}
		\E[S_n \mid 0\leftrightarrow \partial B_n] \leq \E[\# s \mid 0\leftrightarrow \partial B_n] \leq Cn^{2-\min\{c,\eta,\delta\}}\pi_3(n).
	\end{equation*}
\end{proof}

\section{Improving on $\gamma$: connecting shortcuts around three-arm points} 
	\label{sec:improvements}
    In this section, we derive the main estimate in Proposition \ref{prop:main} in Section \ref{section:proofofthm}: there are constants $c,r>0$ uniform in $d$ such that
	\begin{equation}\label{eqn: Eek-estimate-by}
		\P(E(e,k) \mid 0\leftrightarrow \partial B_n, e\in \gamma) \leq c(d^{-r}+\P(E(e,k) \mid A_3(e,d)))
	\end{equation} 
	where $E(e,k)$ is any event that depends only on the status of edges in the box of size $k$ centered at $e$ such that $k\le d^{9/10}$ for $d\le M/4$.
	
	It suffices to show \eqref{eqn: Eek-estimate-by} for $d = M/4$ since for all $k^{10/9}\le d\le M/4$:
	\begin{equation} \label{eqn: main-rhs}
	    \P(E(e,k)\mid A_3(e,M/4))\le C\P(E(e,k)\mid A_3(e,d)).
	\end{equation}
	
	To see \eqref{eqn: main-rhs}, we note that $E(e,k) \cap A_3(e,M/4)$ implies $E(e,k)\cap A_3(e,d) \cap A_3(e,d,M/4)$. By independence, we have
	\begin{equation*}
	    \P(E(e,k), A_3(e,M/4))\le \P(E(e,k), A_3(e,d))\P(A_3(e,d,M/4)).
	\end{equation*}
	On the other hand, by the standard gluing constructions, 
	\begin{equation*}
	    \P(A_3(e,M/4)) \ge c'\P(A_3(e,d))\P(A_3(e,d,M/4))
	\end{equation*}
	where $c'$ holds uniformly in $d$. Then, \eqref{eqn: main-rhs} follows by the definition of conditional probability.
	
	As in Section \ref{sec: 3-arm}, we split the event $\{0\leftrightarrow \partial B_n\}$ into the event $C_0$ that there exists an open circuit around the origin in $B_n$ and its complement $C_0^c$.
	In Sections \ref{sec: C0c} and \ref{sec: C0} respectively, we defined a path $\gamma$ from the origin to $\partial B_n$ on each of these events. We estimate the conditional probability in \eqref{eqn: Eek-estimate-by} by splitting $\{0\leftrightarrow \partial B_n\} \cap \{e\in \gamma\}$ into a number of cases, depending on the location of $e$ and which part of the path $\gamma$ the edge lies on. By the decomposition
    \[\{0\leftrightarrow \partial B_n\} \cap \{e\in \gamma\}=(\{e\in \gamma\}\cap C_0)\cup (\{e\in \gamma\}\cap C_0^c),\] 
    we have
	\begin{equation}\label{eqn: split-t}
	\P(E(e,k) \mid 0\leftrightarrow \partial B_n, e\in \gamma) \leq \P(E(e,k)\mid e\in \gamma, C_0^c)+\P(E(e,k)\mid e\in \gamma, C_0). 
	\end{equation}
	Hence it suffices to derive the estimate \eqref{eqn: Eek-estimate-by} with the left side replaced by $\P(E(e,k)\mid C, e\in \gamma)$, for $C=C_0$ or $C=C_0^c$. This involves intricate but standard gluing constructions using Russo-Seymour-Welsh and generalized Fortuin-Kasteleyn-Ginibre estimates. See Section \ref{subsection:gluing} for a discussion of such constructions. In the interest of brevity, we do not spell out the full details of the applications of FKG and RSW, but only indicate the relevant connections and provide figures for the reader's guidance. 
		\subsection{Estimate on $C_0^c$} \label{subsection:C_0^c}
	
	Recall from Section \ref{sec: C0c} that $C_0^c$ is the event that there is no open circuit around the origin in $B_n$. To minimize repetition, we treat this basic case carefully, and later indicate the necessary modifications to the argument for all other cases.
	
    Recall also from Section \ref{sec: C0c} that $\mathfrak{c}$ is the first closed dual path from the origin to $\partial B_n$ in a fixed deterministic ordering of paths on the dual lattice. The path $\gamma$ from the origin to $\partial B_n$ was constructed by choosing the closest open path to $\mathfrak{c}$ on the counterclockwise side. Therefore, for each edge $e\in \gamma$, there must be a closed dual path connecting $e$ and $\mathfrak{c}$, resulting in an intersection point of the two closed dual paths. 
	
	%****INSERT FIGURE 3****
	%	\begin{figure}
	%		\centering
	%		\begin{minipage}{0.45\textwidth}
	%			\centering
	%			\includegraphics[width=0.9\textwidth]{Figure3a.pdf}
	%			\subcaption{When $e$ is closer to the origin.}
	%			\label{fig:3a}
	%		\end{minipage}\hfill
	%		\begin{minipage}{0.45\textwidth}
	%			\centering
	%			\includegraphics[width=0.9\textwidth]{Figure3b.pdf}
	%			\subcaption{When $e$ is closer to the boundary.}
	%			\label{fig:3b}
	%		\end{minipage}
	%		\caption{On the event $C_0$.}
	%	\end{figure}
	
By \eqref{eqn: split-t}, our task is to estimate
	\begin{equation}\label{eqn: camila}
	\P(E(e,k) \mid e\in \gamma, C_0^c) = \frac{\P(E(e,k), e\in \gamma, C_0^c)}{\P(e\in \gamma, C_0^c)}.
    \end{equation}

We find an upper bound for the numerator in \eqref{eqn: camila} and, by a gluing construction, a lower bound for the denominator. We distinguish two cases, depending on the location of $e$. 
	
\subsubsection{Case A} \emph{$e$ is in $B(n/2)$. } In this case, $M=\mathrm{dist}(e,0)$.
The event in the probability in the numerator in \eqref{eqn: camila}, $E(e,k)\cap C_0^c\cap \{ e\in \gamma\}$, implies the occurrence of
\begin{itemize}
\item a two-arm event in $B(M/4)$,
\item a three-arm event and $E(e,k)$ in $B(e,M/4)$,
\item a two-arm event in the annulus $B(2M, n)$.
\end{itemize}
    By independence of disjoint regions, this gives the estimate
		\begin{align} 
			\P(E(e,k), e\in \gamma, C_0^c)
			&\leq \P(A_{2,OC}(M/4))\P(E(e,k), A_{3,OOC}(e,M/4))\P(A_{2,OC}(2M,n)). \label{eqn:num1}
		\end{align}
    The estimate for the denominator in \eqref{eqn: camila} is somewhat more delicate, because we need to construct an event with probability of order
    \[\P(A_{2,OC}(M/4))\P(A_{3,OOC}(e,M/4))\P(A_{2,OC}(2M,n))\]
    which ensures the occurrence of $\{e\in \gamma\}$. The relevant construction is illustrated in Figure \ref{fig:a}.
    
    %{\color{red} Problem fixed to make all the scales half what they are in the figures.}
	\begin{figure}[hpt]
		\centering
		\includegraphics[width=0.40\textwidth]{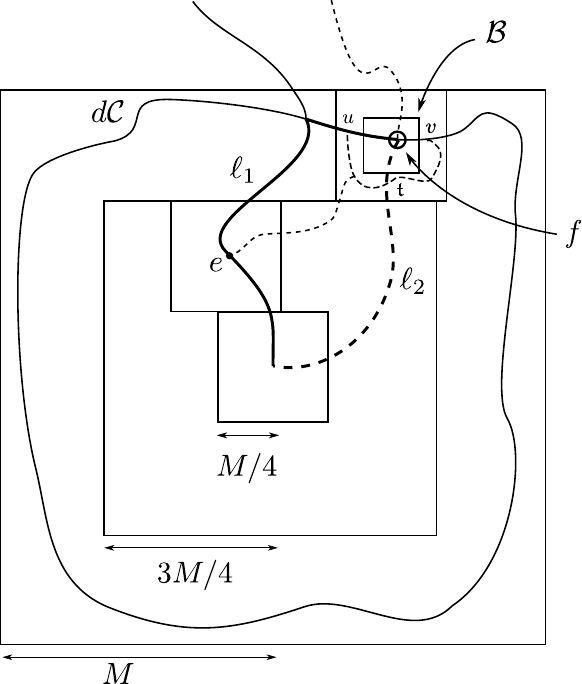}
		\caption{The construction on scale $M$ around the origin $0$ on the event $C_0^c$, Case A. ($M=\mathrm{dist}(e,0)$). Here $\ell_1$ is an open path from the origin to $\partial B_n$ and $\ell_2$ is a closed dual path from $0^*$ to $\partial B_n^*$. Such paths necessarily exist on the event $C_0^c$. The circuit in black represents $d\mathcal{C}$, a circuit with defect. The area bounded by the bold black and dotted curves is the domain $J$ in the definition of the event $D$. The construction in the figure forces the edge $e$ to belong to the open path $\gamma$. The figures are not to scale.}
		\label{fig:a}
	\end{figure}

    \begin{Definition}
        Let $D$ be the event that
        \begin{enumerate}
            \item The edge $e$ lies on an open arm $\ell_1$ from the origin to $\partial B_n$;
                                  
            \item There is an open circuit $d\mathcal{C}$ with a defect in the annulus $B(3M/4,M)$. The defect edge $f$ is contained in a box $\mathcal{B}\subset B(13M/16,15M/16)$ of side length $M/8$;

            \item There is a closed dual arc $\mathfrak{k}$ inside $d\mathcal{C}$, between the box $\mathcal{B}$ and a box $\mathcal{B}'$ with the same center as $\mathcal{B}$ and twice the side length of $\mathcal{B}$. This arc connects two edges that are dual to two open edges $u$ and $v$ on the circuit with defect $d\mathcal{C}$. The edge $u$ lies on the arc of $d\mathcal{C}$ between $f$ and the endpoint of $\ell_1$ on the circuit when the circuit is traversed in the counterclockwise direction. The edge $v$ lies between $f$ and the endpoint of $\ell_1$ when the circuit is traversed in the clockwise direction. $u$ and $v$ are connected by an arc that consists only of open edges of $d\mathcal{C}$ inside $\mathcal{B}$ as well as the (closed) defect edge $f$;
                         
            \item A dual neighbor $0^*$ of the origin is connected to $\partial B_n^*$ by a closed dual path $\ell_2$, which necessarily contains $f^*$, the dual of the defect edge in the second item. Consider the closed curve\footnote{``Closed'' here means that the curve is a continuous image of a circle.} obtained by joining the origin to $0^*$, followed by $\ell_2$, then following the circuit  with defect $d\mathcal{C}$ in the \emph{counterclockwise} direction from $f$ to the endpoint of $\ell_1$ on $d\mathcal{C}$, and finally following $\ell_1$ back to the origin. Denote the region bounded by this curve by $J$;  
            
            \item The dual edge $e^*$ is connected to the arc $\mathfrak{k}$ by a closed dual path lying inside $J$. In other words, $\mathfrak{k}$ is connected to $e^*$ to the \emph{clockwise} side of $\ell_1$. 
      
        \end{enumerate}
    \end{Definition}
    The significance of the event $D$ is in the following:
    \begin{Lemma}
        The event $D$ implies $C_0^c\cap \{e\in \gamma\}$.
    \begin{proof}
        The closed dual connections from the origin to $\partial B_n^*$ imply the occurrence of $C_0^c$. Any closed dual path from the origin to $\partial B_n$ must contain the edge $f^*$, and thus cross the dual arc $\mathfrak{k}$. This includes the arc $\mathfrak{c}$ in the definition of $\gamma$ in Section \ref{sec: 3-arm}. Thus $e^*$ is connected to $\mathfrak{c}$ by a closed dual path starting at $\mathfrak{c}$ such that the other endpoint is connected to the clockwise side of $\ell_1$. The edge $e$ must thus be part of the counterclockwise closest open path to $\mathfrak{c}$ in $B_n$, so $e\in \gamma$.
    \end{proof}
    \end{Lemma}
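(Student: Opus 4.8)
\emph{Proof strategy.} The plan is to establish the two inclusions $D\subseteq C_0^c$ and $D\subseteq\{e\in\gamma\}$ separately, in each case using only planar duality together with the single structural fact that an open primal edge and a closed dual edge cannot share a midpoint, so that an open path and a closed dual path can never cross.

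\emph{The inclusion $D\subseteq C_0^c$.} By item~(4), the event $D$ provides a closed dual path $\ell_2$ from a dual neighbor $0^*$ of the origin to $\partial B_n^*$. An open circuit around the origin inside $B_n$ would separate the origin from $\partial B_n$, and would therefore have to be crossed by $\ell_2$, which is impossible. Hence $C_0^c$ holds on $D$; in particular the deterministically first closed dual crossing $\mathfrak c$ of Section~\ref{sec: C0c}, and thus the path $\gamma$, are well defined on $D$.

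\emph{The inclusion $D\subseteq\{e\in\gamma\}$.} The first step is to show that every closed dual path from $0$ to $\partial B_n$ meets $\mathfrak k$. By items~(2)--(3), $d\mathcal C$ is an open circuit around the origin except for the single closed edge $f$, so the only edge of $d\mathcal C$ whose dual can lie on a closed dual path is $f$; since any closed dual path from $0$ (inside $d\mathcal C$) to $\partial B_n$ (outside) must cross $d\mathcal C$, it must use $f^*$. Moreover $\mathfrak k$, together with the arc of $d\mathcal C$ from $u$ through $f$ to $v$ inside $\mathcal B$, bounds a region $Q$ of the interior of $d\mathcal C$ that contains the inner endpoint of $f^*$ but lies far from the origin, since $\mathfrak k\subset\mathcal B'\setminus\mathcal B$. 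A closed dual path leaving $f^*$ toward the origin must leave $Q$, and the only portion of $\partial Q$ it may cross is $\mathfrak k$, the rest consisting of open edges of $d\mathcal C$. Applying this to $\mathfrak c$ yields a point of $\mathfrak c\cap\mathfrak k$. For the second step, item~(5) gives a closed dual path from $e^*$ to $\mathfrak k$ contained in $J$, hence on the clockwise side of $\ell_1$; concatenating it with a sub-arc of $\mathfrak k$ up to a point of $\mathfrak c$ produces a closed dual path $\pi$ from $e^*$ to $\mathfrak c$ that lies on the clockwise side of $\ell_1$. Since the open path $\ell_1$ (item~(1)) and the closed dual path $\mathfrak c$ cannot cross, $\mathfrak c$ lies entirely on one side of $\ell_1$, and the existence of $\pi$ forces this to be the clockwise side; thus $\ell_1$ lies on the counterclockwise side of $\mathfrak c$ and is an admissible competitor in the definition of $\gamma$. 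By the extremal property of $\gamma$, it lies in the closed strip bounded by $\mathfrak c$ and $\ell_1$. Finally, $\pi$ runs from $e^*$ (next to $e\in\ell_1$) to $\mathfrak c$, so its endpoints and those of any open arc from $0$ to $\partial B_n$ inside this strip that avoids $e$ alternate around the boundary of the strip (traversed as $0$, then $\ell_1$ through $e$, then an arc of $\partial B_n$, then $\mathfrak c$), forcing a crossing of $\pi$, which is impossible. Hence $\gamma$ passes through $e$, that is, $e\in\gamma$.

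\emph{Main obstacle.} No single step is difficult; the work is in the planarity bookkeeping. One must verify rigorously, through Jordan-curve arguments, that $\mathfrak k$ and the arc of $d\mathcal C$ through $f$ really do enclose $f^*$ on the side away from the origin, and that the orientation conventions built into items~(2)--(5) — the positions of $u,v$ relative to $f$ and to the endpoint of $\ell_1$ on $d\mathcal C$, the region $J$, and ``counterclockwise side of $\mathfrak c$'' versus ``clockwise side of $\ell_1$'' — are mutually consistent, so that $\ell_1$ is genuinely admissible for $\gamma$ and $\pi$ is genuinely transverse to every competing open arc. This is the ``intricate but standard'' bookkeeping flagged in the paper, and Figure~\ref{fig:a} is essentially the proof.
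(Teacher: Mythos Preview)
Your proposal is correct and follows essentially the same approach as the paper's proof: use $\ell_2$ to get $C_0^c$, argue that any closed dual crossing (in particular $\mathfrak c$) must pass through $f^*$ and hence meet $\mathfrak k$, concatenate to obtain a closed dual connection from $e^*$ to $\mathfrak c$ on the clockwise side of $\ell_1$, and conclude $e\in\gamma$. You have simply written out the planar/Jordan-curve bookkeeping (the region $Q$, the strip between $\mathfrak c$ and $\ell_1$, the alternation argument) that the paper compresses into a single sentence.
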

    
    Standard gluing constructions using generalized FKG as in Section \ref{subsection:gluing} and \cite[Section 5]{DHS17} give the following.
    
    \begin{Lemma}\label{lem: C0c-case-A-lwrbd}
        There is a constant $c>0$ such that, uniformly in the location of $e\in B_n$, we have
        \begin{equation*}
        \mathbb{P}(D)\ge c\P(A_{2,OC}(M/4))\P(A_{3,OOC}(e,M/4))\P(A_{2,OC}(2M,n)).
        \end{equation*}
    \begin{proof}
    The proof involves a repeated application of the generalized FKG \eqref{eqn: FKG} and RSW \eqref{eqn: RSW} estimates, as well as Proposition \ref{prop: smoothness}, to construct the connections indicated in Figure \ref{fig: drawing3} and force the occurrence of the event $D$, following two general principles:
    \begin{itemize}
    \item connections across boxes or annuli with aspect ratio on a fixed scale (either $n$ or $M$) have probabilities lower bounded by constants independent of $n$, $M$, and
    \item open (resp. dual closed) connections between different scales $n_1\ll n_2$ have probability costs comparable to arm events across the annulus $B(n_1,n_2)$.
    \end{itemize}
    
    For the construction inside the box $\mathcal{B}$, which contains the defect edge $f$, we use the second moment method. See Figure \ref{fig: drawing14}. Denoting by $\mathcal{N}$ the number of closed dual edges $e'$ in the box of side length $M/16$ with the same center as $\mathcal{B}$, such that $e'$ is connected to intervals $I_1$ and $I_2$ on the top, resp. bottom side of $\mathcal{B}'$ by two disjoint closed dual arms, and connected to $I_3$ and $I_4$ on the left, resp. right side of $\mathcal{B}'$ by two disjoint open arms, we have
    \[cM^2\pi_{4,OCOC}(M) \le \mathbb{E}[\mathcal{N}]\le CM^2\pi_{4,OCOC}(M).\]
    A calculation analogous to that in \cite[Proposition 5.9]{DHS17} shows that
    \[\mathbb{E}[\mathcal{N}^2]\le C(\mathbb{E}[\mathcal{N}])^2.\]
    By the second moment method, it follows that $\mathcal{N}>0$ with positive probability, in which case there is at least one such edge $e'$. Using generalized FKG, the closed dual arms emanating from $e'$ are extended into a closed dual arm from the origin to $\partial B_n$, while the open arms are extended into the circuit with defect $d\mathcal{C}$, as shown in Figure \ref{fig:a}.
    \end{proof}                                 
    \end{Lemma}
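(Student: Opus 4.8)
The plan is a multi-scale gluing construction, combining the generalized FKG inequality \eqref{eqn: FKG}, RSW crossing bounds \eqref{eqn: RSW}, the smoothness estimates (Proposition \ref{prop: smoothness}), and one second-moment argument to create the defect edge. First I would lay out a bounded number of landmark regions inside $B_n$, all of fixed aspect ratio on either scale $M$ or scale $n$ (see Figure \ref{fig:a}): the box $B(0,M/2)$ around the origin; the box $B(e,M/2)$ around $e$; the annulus $B(3M/2,5M/2)$ that will carry the circuit with defect $d\mathcal{C}$; the box $\mathcal{B}\subset B(7M/4,9M/4)$ of side length $M/2$, placed on the clockwise side of $\ell_1$, where the defect edge $f$ will sit; a ``corridor'' joining $B(e,M/2)$ to $\mathcal{B}$ which will lie inside the eventual region $J$; and the outer annulus $B(5M/2,n)$. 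In each region I prescribe the event I want: near the origin the two-arm event $A_{2,OC}(M/2)$, producing the first pieces of the open arm $\ell_1$ and of the closed dual arm $\ell_2$; near $e$ the three-arm event $A_{3,OOC}(e,M/2)$, giving the two open pieces of $\ell_1$ through $e$ and a closed dual arm out of $e^*$; in the outer annulus $A_{2,OC}(5M/2,n)$, continuing $\ell_1$ and $\ell_2$ to $\partial B_n$ (when $5M/2\ge n$ this last factor is an absolute constant and the outer annulus is omitted); and in all the remaining fixed-aspect-ratio boxes and annuli, RSW-type open or closed dual crossings, each of probability bounded below by a constant independent of $n$. The arcs of the circuit $d\mathcal{C}$ away from its defect are obtained this way; the defect itself is produced by the second-moment argument below.

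The one genuinely non-routine step is producing the defect edge $f$ with the right local geometry: an edge that is closed, lies on the open circuit $d\mathcal{C}$ except at itself, and whose dual sits on both the closed dual arc $\mathfrak{k}$ and the closed dual crossing $\ell_2$. I would do this by a second-moment computation inside $\mathcal{B}$, exactly as in \cite[Proposition 5.9]{DHS17}: let $\mathcal{N}$ be the number of closed dual edges $e'$ in a central sub-box of $\mathcal{B}$ having two disjoint open arms to one opposite pair of sides of $\mathcal{B}$ and two disjoint closed dual arms to the other pair (the alternating four-arm event). Then $\mathbb{E}[\mathcal{N}]\asymp M^2\pi_{4,OCOC}(M)$, which tends to infinity since $\pi_{4,OCOC}$ decays strictly more slowly than $M^{-2}$, and a standard pairing/quasi-multiplicativity estimate gives $\mathbb{E}[\mathcal{N}^2]\le C(\mathbb{E}[\mathcal{N}])^2$; by the second moment method $\mathbb{P}(\mathcal{N}>0)\ge c>0$. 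On $\{\mathcal{N}>0\}$ we fix such an $e'$, set $f=e'$, and use generalized FKG to extend its two open arms into the full circuit $d\mathcal{C}$ and its two closed dual arms into $\mathfrak{k}$ and into the dual crossing $\ell_2$ through $f^*$.

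With all these pieces available, the lower bound follows from one application of generalized FKG. I split the sites of $B_n$ into the disjoint classes required by \eqref{eqn: FKG}: the sites supporting the parts that must be open ($\ell_1$, $d\mathcal{C}$, the RSW open crossings) form $\mathcal{A}^+$; the sites supporting the parts that must be closed dual ($\ell_2$, the closed dual arm from $e^*$, $\mathfrak{k}$, the RSW dual crossings) form $\mathcal{A}^-$; and the already-conditioned two-, three-, and four-arm events around the origin, $e$, $e'$, and out to $\partial B_n$ are carried by the remaining sites $\mathcal{A}$. One checks the supports can be taken disjoint because the landmark regions overlap only along boundary corridors, which can be allocated consistently. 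Applying \eqref{eqn: FKG}, together with Proposition \ref{prop: smoothness} to absorb the factors of $2$ between the radii $M/2,M,2M$ and between $M$ and $5M/2$, yields
\[\mathbb{P}(D)\ \ge\ c\,\P(A_{2,OC}(M/2))\,\P(A_{3,OOC}(e,M/2))\,\P(A_{2,OC}(5M/2,n)),\]
with $c$ independent of $n$ and of the position of $e$.

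The hard part will not be any single probability bound but the topological bookkeeping that makes the construction actually force $\{e\in\gamma\}$. One must route the closed dual arm out of $e^*$ so that it reaches $\mathfrak{k}$ strictly on the \emph{clockwise} side of $\ell_1$ and inside the region $J$ bounded by $\ell_1$, $\ell_2$, and the counterclockwise arc of $d\mathcal{C}$; only then is the canonical dual path $\mathfrak{c}$ in the definition of $\gamma$ forced to cross $\mathfrak{k}$ on the clockwise side of $\ell_1$, which puts $e$ on the counterclockwise-closest open connection to $\mathfrak{c}$. Arranging this side-constraint simultaneously with the disjoint-support requirement of generalized FKG — in particular carving out the corridor and the box $\mathcal{B}$ without letting the ``open'' and ``closed dual'' supports collide — is the delicate point, and it is precisely where the radial construction is more involved than the lowest-crossing construction of \cite{DHS17,DHS19}.
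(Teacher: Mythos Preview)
Your proposal is correct and follows essentially the same approach as the paper: a generalized FKG/RSW gluing at fixed aspect ratio on scales $M$ and $n$, with arm events bridging the scales, and a second-moment argument on alternating four-arm edges inside $\mathcal{B}$ to produce the defect edge $f$. Your write-up is in fact more explicit than the paper's on the landmark regions and on the side-constraint for the dual arm from $e^*$; note only that the topological bookkeeping you flag in your last paragraph is really the content of the preceding lemma (that $D\subset C_0^c\cap\{e\in\gamma\}$) rather than of the probability lower bound itself.
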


    \begin{figure} [hpt]
		\centering
		\includegraphics[width=0.750\textwidth]{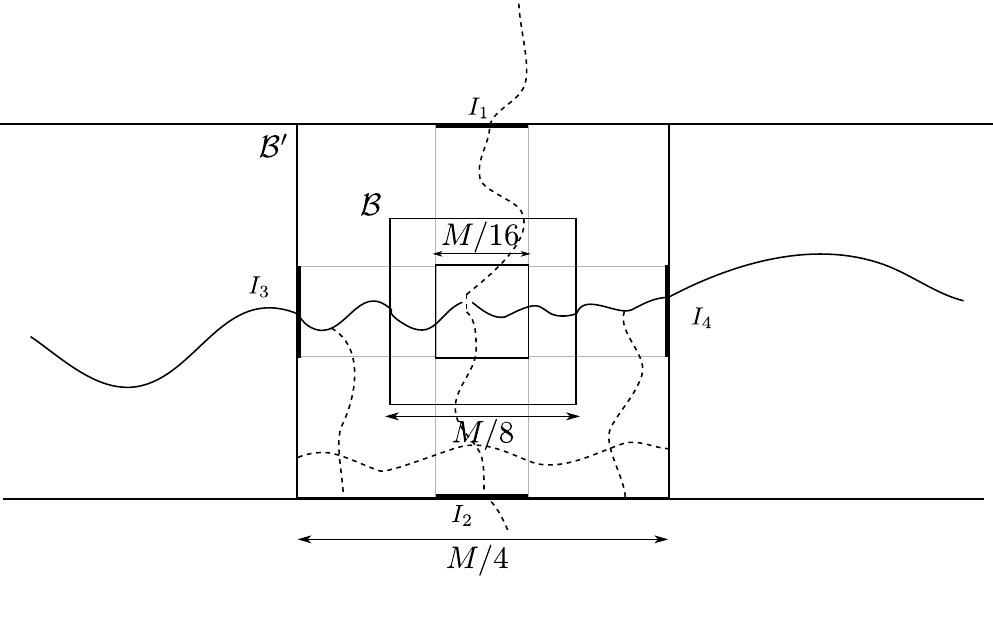}
		\caption{The construction inside the box $\mathcal{B}'$ to obtain a defect edge: we use the second moment method to show that, with probability bounded below uniformly in $n$, there is an edge inside a box of side length $M/16$ with four alternating arms (open, dual closed, open, dual closed) connected to prescribed sides of the box $\mathcal{B}'$. The open connections are then extended into an open circuit with defect. The closed dual connections are extended into an arm from the origin to $\partial B_n$. The overall cost of all connections indicated in this figure is bounded below by a constant.}
		\label{fig: drawing14}
	\end{figure}
    Combining the previous two lemmas and using \eqref{eqn: smoothness}, we obtain that the denominator in \eqref{eqn: camila} is bounded below:
    \[\P(e\in \gamma,C_0^c)\ge c\P(A_{2,OC}(M/4))\P(A_{3,OOC}(e,M/4))\P(A_{2,OC}(2M,n)), \]
    where $c>0$ is a positive constant.
    Together with the upper bound \eqref{eqn:num1}, the above implies the estimate
    \[\P(E(e,k) \mid C_0^c, e\in \gamma)\le C\P(E(e,k)\mid A_{3,OOC}(e,M/4)),\]
    in Case A, $M=\mathrm{dist}(e,0)$.

\subsubsection{Case B: $e$ is closer to $\partial B_n$.} In this case, $M = \mathrm{dist}(e,\partial B_n)$. 

To estimate the numerator in \eqref{eqn: camila}, we separate $B_n$ into three regions: the inner box $B(n/4)$, the small box $B(e,M/4)$ around $e$, and the region $B(n/4,n)\setminus B(e,M/4)$.

	\begin{figure} [hpt]
		\centering
		\includegraphics[width=0.50\textwidth]{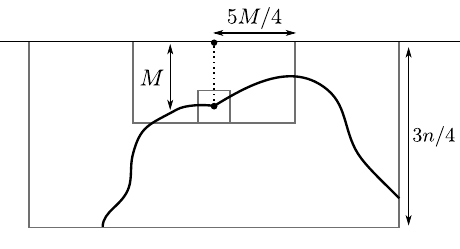}
		\caption{The existence of two open arms to distance $n$ from an edge $e$ at distance $M$ from $\partial B_n$ implies the two-arm event $A_{2,OO}(M/4)$ and the half-plane event $A_{2,OO}^{hp}(p(e),5M/4,3n/4)$.}
		\label{fig: half-plane}
	\end{figure}
	
Denote by $p_x(e)$ the projection onto $\partial B_n$ of the lower left endpoint of $e$ (the first of the two endpoints in the lexicographic order on $\mathbb{Z}^2$) along the $x$-axis, and by $p_y(e)$ the projection of the lower left endpoint onto $\partial B_n$ along the $y$-axis. Finally, let $p(e)$ be the $\ell^2$ projection of the lower left endpoint onto $\partial B_n$.

We make extensive use of the \emph{half-plane} events $A_{k,\sigma}^{hp}$. Their relevance here is illustrated in Figure \ref{fig: half-plane}. For example, if $e\in B_n$ is at distance $M\ll n$ from the boundary and has two open arms to distance of order $n$, then this implies the simultaneous occurrence of the two-arm event $A_{2,OO}(e,M/4)$ \emph{and} the occurrence inside $B_n\cap B(p(e),5M/4,3n/4)$ of two open arms. This last event is the half-plane arm event $A_{2,OO}^{hp}(e,5M/4,3n/4)$.

To simplify the proofs of the estimates below, we will assume that
\[M=\mathrm{dist}(e,\partial B_n)=\mathrm{dist}(e,p(e))\ll \max\{\mathrm{dist}(e,p_x(e)),\mathrm{dist}(e,p_y(e))\}\ge cn.\]
This amounts to assuming that the edge $e$ is not close to a corner of the box $\partial B_n$. The case when $e$ is close to a corner is dealt with by similar constructions to those in this section, involving the use of quarter-plane arm events instead of half-plane events.

The event $\{E(e,k), e\in \gamma, C_0^c\}$ implies
\begin{itemize}
\item the existence of two arms, one open and one closed dual arm in $B(n/4)$; the open arm connects the origin to $\partial B(n/4)$, the closed dual arm connects a dual neighbor of the origin to $\partial B(n/4)^*$,

\item the joint occurrence, inside $B(e,M/4)$, of $E(e,k)$ and a three-arm event (two open arms, one closed dual) from $e$ to distance $M/4$, and

\item the existence, inside the region $B_n\cap (B(p(e),3n/4))\setminus B(e,5M/4)$ of two arms, one open and one closed dual, from $\partial B(e,5M/4)$ to $\partial B(p(e),3n/4)$. Here $p(e)$ is the orthogonal projection of the lower left endpoint of the edge $e$ onto the boundary $\partial B_n$.
\end{itemize}
By independence, this gives the upper bound
	\begin{align} \label{eqn: num2}
	\P(E(e,k), e\in \gamma, C_0^c)
	&\leq \P(E(e,k),A_{3,OOC}(e,M/4), A_{2,OC}^{hp}(p(e), 5M/4, 3n/4), A_{2,OC}(n/4)) \nonumber \\
	&= \P(E(e,k), A_{3,OOC}(e,M/4))\P(A_{2,OC}^{hp}(p(e),5M/4, 3n/4))\P(A_{2,OC}(n/4)).
	\end{align}

For the lower bound, we introduce an event $F$ that implies the event $\{e\in \gamma\} \cap C_0^c$ when $M = \mathrm{dist}(e,\partial B_n)$. See Figure \ref{fig: drawing3} for an illustration.
                
\begin{Definition}
    The event $F$ is defined by the simultaneous occurrence of the following:
    \begin{enumerate}
        \item the edge $e$ lies on an open arm $\ell_1$ from the origin to $\partial B_n$;
                                  
        \item there is an open circuit $d\mathcal{C}$ with a defect in the annulus $B(n/8,n/4)$. The defect edge $f$ is contained in a box $\mathcal{B}\subset B(5n/32,7n/32)$ of side length $n/16$;

        \item there is a closed dual arc $\mathfrak{k}$ outside $d\mathcal{C}$ in the annulus between the box $\mathcal{B}$ and a box $\mathcal{B}'$ with the same center as $\mathcal{B}$ and twice the side length of $\mathcal{B}$. This arc connects two edges that are dual to two open edges $u$ and $v$ on the circuit with defect $d\mathcal{C}$. The edge $u$ lies on the arc of $d\mathcal{C}$ between $f$ and the endpoint of $\ell_1$ on the circuit when the circuit is traversed in the counterclockwise direction. The edge $v$ lies between $f$ and the endpoint of $\ell_1$ when the circuit is traversed in the clockwise direction. $u$ and $v$ are connected by an arc that consists only of open edges of $d\mathcal{C}$ inside $\mathcal{B}$ as well as the (closed) defect edge $f$;
                         
        \item a dual neighbor $0^*$ of the origin is connected to $\partial B_n^*$ by a closed dual path $\ell_2$, which necessarily contains the dual edge $f^*$, the dual to the defect edge in the first item;
        Consider the closed curve formed by concatenating $f^*$ and the portion of $\ell_2$ from $f^*$ to $\partial B_n$, then following $\partial B_n$ in the \emph{counterclockwise} direction from the endpoint of $\ell_2$ on $\partial B_n$ to the endpoint of $\ell_1$ on $d\mathcal{C}$, and finally following $d\mathcal{C}$ in the clockwise direction until one reaches $f$. Denote by $J$ the region bounded by this curve;

        \item The dual edge $e^*$ is connected to the arc $\mathfrak{k}$ by a closed dual path lying inside $J$. In other words, $\mathfrak{k}$ is connected at $e^*$ to the \emph{clockwise} side of $\ell_1$. 
    \end{enumerate}
\end{Definition}
By the same argument as for Lemma \ref{lem: C0c-case-A-lwrbd}, we obtain the following.
\begin{Lemma}\label{lem: C0c-case-B-lwrbd}
Let $e\in B_n$ and suppose that $M=\mathrm{dist}(e,\partial B_n)$. The event $F$ implies $\{ e\in \gamma \}\cap C_0^c$.
\end{Lemma}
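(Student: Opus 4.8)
The statement to prove is Lemma~\ref{lem: C0c-case-B-lwrbd}: the event $F$ implies $\{e\in\gamma\}\cap C_0^c$ when $M=\mathrm{dist}(e,\partial B_n)$. This is the exact analogue of the earlier Lemma (following the definition of $D$) proved in Case~A, and the proof strategy should parallel that one almost verbatim, with the roles of ``inside $d\mathcal{C}$'' and ``outside $d\mathcal{C}$'' interchanged because now the defect circuit $d\mathcal{C}$ lives in the annulus $B(n/2,3n/2)$ rather than at scale $M$ around the origin.

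The plan is as follows. First, observe that items (1) and (4) of the definition of $F$ directly give a closed dual connection from a dual neighbor $0^*$ of the origin all the way to $\partial B_n^*$ (the path $\ell_2$), so no open circuit can surround the origin inside $B_n$; this establishes $C_0^c$. Next I would argue that the edge $e$ must lie on the counterclockwise-closest open arm $\gamma$ to the canonical closed dual path $\mathfrak{c}$. The key topological point is that \emph{every} closed dual path from (a dual neighbor of) the origin to $\partial B_n$ — in particular the deterministically-chosen path $\mathfrak{c}$ — is forced to pass through the dual edge $f^*$: indeed $f$ is the unique defect on the open circuit $d\mathcal{C}$, so the only way a dual path can get from the inside of $d\mathcal{C}$ to the outside is through $f^*$. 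Having entered the region outside $d\mathcal{C}$ through $f^*$, and being forced eventually to reach $\partial B_n$, the path $\mathfrak{c}$ must cross the closed dual arc $\mathfrak{k}$ situated just outside $d\mathcal{C}$ between $\mathcal{B}$ and $\mathcal{B}'$ (items (2)--(3)): here one uses that $\mathfrak{k}$ together with a sub-arc of $d\mathcal{C}$ separates the ``$f$-side'' portion of the region near $\mathcal{B}$ so that $\mathfrak{c}$, which must run from $f^*$ toward $\partial B_n$, crosses $\mathfrak{k}$.

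Then item (5) provides a closed dual path from $e^*$ to $\mathfrak{k}$ lying inside the region $J$, which by construction sits on the \emph{clockwise} side of $\ell_1$. Concatenating this with the portion of $\mathfrak{c}$ reaching $\mathfrak{k}$, we conclude that $e^*$ is joined to $\mathfrak{c}$ by a closed dual path whose far endpoint approaches $\ell_1$ from the clockwise side. Since $\gamma$ is by definition the open arm from the origin to $\partial B_n$ that is closest to $\mathfrak{c}$ on its counterclockwise side, and since $e$ lies on the open arm $\ell_1$ which is hit by a closed dual connection from $\mathfrak{c}$ from the clockwise direction, no open path strictly between $\mathfrak{c}$ and $e$ (on the counterclockwise side) can exist — any such path would be cut by the closed dual connection. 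Hence $e$ lies on $\gamma$ itself, i.e.\ $e\in\gamma$. Combined with $C_0^c$ this gives $F\subset\{e\in\gamma\}\cap C_0^c$, as claimed.

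The main obstacle is the planar-topology bookkeeping: one must be careful that the curve defining $J$ in item (4) is genuinely a simple closed curve and that $e^*$, $\mathfrak{k}$, and the relevant portion of $d\mathcal{C}$ all lie on the correct side of $\ell_1$ so that ``clockwise'' and ``counterclockwise'' are consistently oriented; the degenerate configurations (e.g.\ when $\ell_1$ meets $d\mathcal{C}$ very close to $f$, or when $e$ is near $\partial B_n$ and the truncation interacts with the annulus $B(n/2,3n/2)$) need to be checked, but each reduces to the same Jordan-curve argument already used in Case~A. Since the combinatorial structure of $F$ was designed precisely to mirror that of $D$, I would simply point to the proof of the Case~A lemma and indicate the two changes: $d\mathcal{C}$ now lives at scale $n$ rather than scale $M$, and $\mathfrak{k}$ lies outside rather than inside $d\mathcal{C}$; the separation argument is otherwise identical.
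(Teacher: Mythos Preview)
Your proposal is correct and follows essentially the same approach as the paper: the paper itself gives no separate proof of this lemma, simply stating that it follows by the same argument as in Case~A, and you have spelled out that argument with the appropriate modifications (the defect circuit $d\mathcal{C}$ at scale $n$ rather than $M$, and $\mathfrak{k}$ lying outside rather than inside $d\mathcal{C}$). Your identification of the key topological point---that any closed dual path from the origin to $\partial B_n$ must pass through $f^*$ and hence meet $\mathfrak{k}$---matches the paper's reasoning for the event $D$ exactly.
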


\begin{figure} [hpt]
	\centering
	\includegraphics[width=0.55\textwidth]{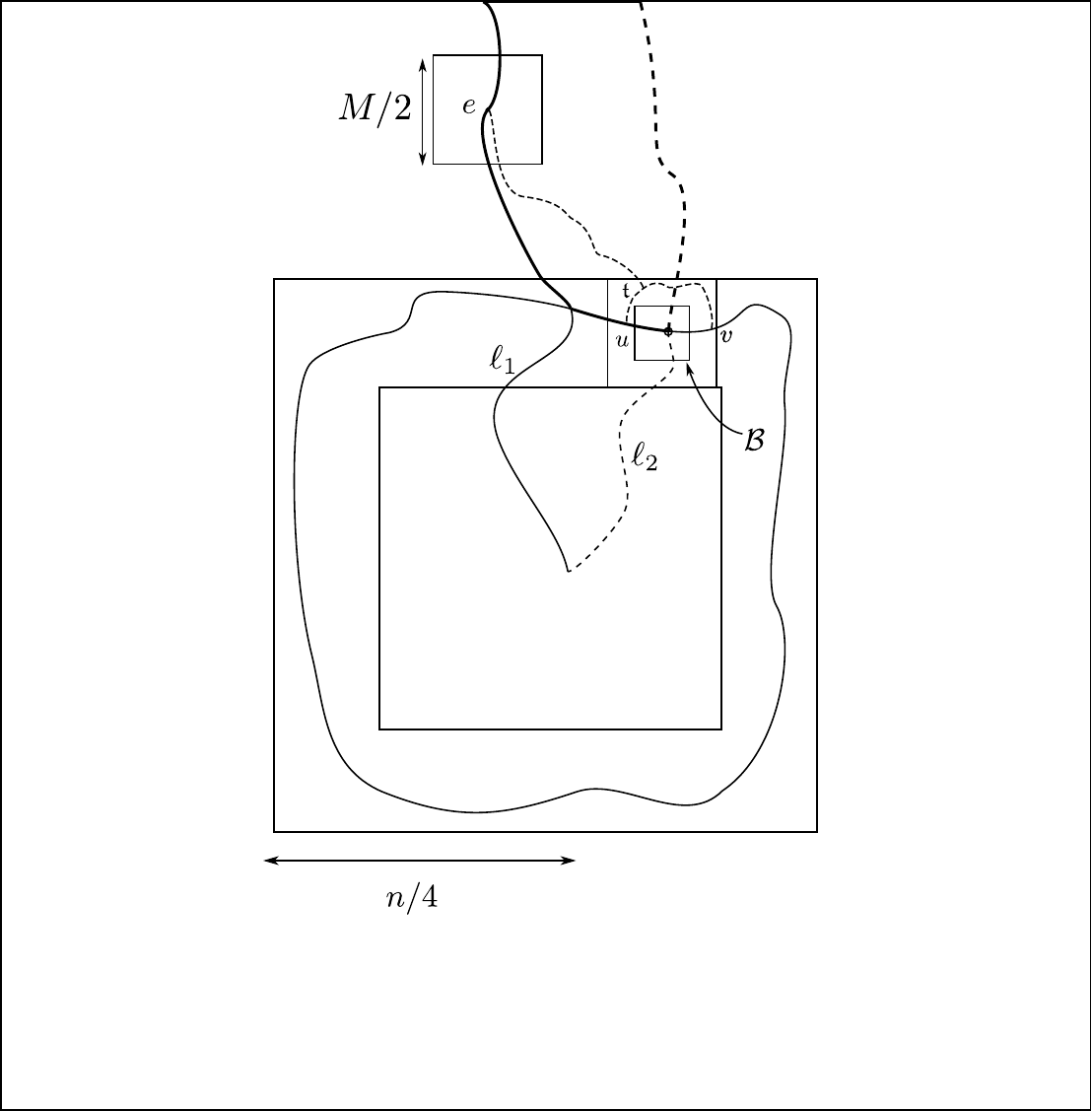}
	\caption{The construction in $B_n$ on the event $C_0^c$ in Case B ($M=\mathrm{dist}(e,\partial B_n)$). As in Figure \ref{fig:a}, $\ell_1$ and $\ell_2$ are open, respectively closed dual paths from the origin to distance $n$ and the circuit in black represents a closed dual circuit with defect. The construction forces $e\in\gamma$.}
    \label{fig: drawing3}
\end{figure}

By standard gluing constructions illustrated in Figure \ref{fig: drawing3}, one has the following:
\begin{Lemma}
  There  is a constant $c>0$ such that, if $M=\mathrm{dist}(e,\partial B_n)$ then, uniformly in the location of $e$, we have
\begin{equation*}
\mathbb{P}(F)\ge c\mathbb{P}(A_{3,OOC}(e,M/4))\mathbb{P}(A_{2,OC}^{hp}(p(e),5M/4,3n/4))\mathbb{P}(A_{2,OC}(n/4)).
\end{equation*}
\end{Lemma}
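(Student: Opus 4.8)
The plan is to prove the lower bound $\mathbb{P}(F)\ge c\,\mathbb{P}(A_{2,OC}(n/2))\mathbb{P}(A_{2,OC}^{hp}(p(e),2M,n))\mathbb{P}(E(e,k),A_{3,OOC}(e,M))$ by the same scheme as in Lemma \ref{lem: C0c-case-A-lwrbd}, adapted to the geometry of Case B where the relevant scale near $e$ is $M=\mathrm{dist}(e,\partial B_n)$ and the circuit with defect lives at scale $n$ rather than at scale $M$. First I would fix the three ``source'' events whose probabilities appear on the right-hand side: the two-arm event $A_{3,OOC}(e,M)$ together with $E(e,k)$ inside $B(e,M/2)$ (this is legitimate since $100k<M$, so $E(e,k)$ and the arms are measurable with respect to disjoint edge sets after the arms have reached scale $M/2$); a half-plane two-arm event in the region $B(p(e),n/2)\setminus B(e,l)$, which will carry the arms from $\partial B(e,M)$ out to scale $n/2$; and the bulk two-arm event $A_{2,OC}(n/2)$ near the origin, producing $\ell_1,\ell_2$ from $0$ to $\partial B_{n/2}$. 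The three events are supported on disjoint sets of edges, so their joint probability factorizes, matching the right-hand side up to the quasi-multiplicativity losses handled by Proposition \ref{prop: smoothness} and \eqref{eqn: smoothness}.

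Next I would carry out the gluing. Using generalized FKG \eqref{eqn: FKG} together with RSW \eqref{eqn: RSW}, I connect the two open arms leaving $B(e,M)$ through the annulus $B(e,M,n/2)$ (respectively the half-plane region attached to $p(e)$) into a single open arm $\ell_1$ that continues all the way to $\partial B_n$ by piecing together with the open arm from the origin through $B_{n/2}$; likewise the closed dual arm from $e^*$ is routed out and spliced with $\ell_2$. The only genuinely new structural ingredient relative to Case A is the placement of the circuit with defect $d\mathcal{C}$ in the annulus $B(n/2,3n/2)$, which straddles $\partial B_n$, and the closed dual arc $\mathfrak{k}$ lying \emph{outside} $d\mathcal{C}$. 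For the local construction inside $\mathcal{B}\subset B(7n/16,9n/16)$ I would reuse verbatim the second-moment argument of Lemma \ref{lem: C0c-case-A-lwrbd} (the one depicted in Figure \ref{fig: drawing14}): denoting by $\mathcal{N}$ the number of closed dual edges $e'$ in a box of side length $n/8$ concentric with $\mathcal{B}$ that have two disjoint closed dual arms to two prescribed sides and two disjoint open arms to the other two, the four-arm computation of \cite[Proposition 5.9]{DHS17} gives $c\,n^2\pi_{4,OCOC}(n)\le \mathbb{E}[\mathcal{N}]\le C\,n^2\pi_{4,OCOC}(n)$ and $\mathbb{E}[\mathcal{N}^2]\le C(\mathbb{E}[\mathcal{N}])^2$, whence $\mathbb{P}(\mathcal{N}>0)\ge c>0$; one then extends the open arms of such an $e'$ into $d\mathcal{C}$ and the dual arms into the portion of $\ell_2$ reaching $\partial B_n^*$ and into $f^*$, all by further FKG/RSW gluings whose cost is bounded below by a constant. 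Finally, one forces item (5), the closed dual connection from $e^*$ to $\mathfrak{k}$ on the clockwise side of $\ell_1$ inside the region $J$; this is again a bounded-cost RSW connection across an annulus of fixed aspect ratio, using that $J$ contains a region of the required shape and that the relevant closed dual arm from $e$ has already been built out to scale $M$.

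Assembling all of these independent (or FKG-correlated) pieces, and absorbing the finitely many quasi-multiplicativity factors using \eqref{eqn: smoothness} and Proposition \ref{prop: smoothness}, yields
\[
\mathbb{P}(F)\ge c\,\mathbb{P}(A_{2,OC}(n/2))\,\mathbb{P}(A_{2,OC}^{hp}(p(e),2M,n))\,\mathbb{P}(E(e,k),A_{3,OOC}(e,M)),
\]
uniformly in the location of $e\in B_n$ with $M=\mathrm{dist}(e,\partial B_n)$, as claimed. The main obstacle I anticipate is bookkeeping rather than conceptual: one must check that the specific regions named in the definition of $F$ (the annulus $B(n/2,3n/2)$ for $d\mathcal{C}$, the half-plane region near $p(e)$, and the domain $J$ whose boundary is the concatenation of $f^*$, a piece of $\ell_2$, an arc of $\partial B_n$, and an arc of $d\mathcal{C}$) genuinely leave enough disjoint ``room'' of bounded aspect ratio for every RSW box used in the gluing, and in particular that routing $e^*$ to $\mathfrak{k}$ strictly on the clockwise side of $\ell_1$ (so that the implication $F\subset\{e\in\gamma\}\cap C_0^c$ of Lemma \ref{lem: C0c-case-B-lwrbd} holds) does not conflict with the arms already committed near $e$; this is exactly the delicacy flagged in the overview as ``forcing the occurrence of $e\in\gamma$'', but it is resolved by the same figure-guided constructions as in Case A, with the roles of the scales $M$ and $n$ interchanged. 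In the interest of brevity I would, as elsewhere in this section, present the construction via Figure \ref{fig: drawing3} and indicate the relevant connections rather than spelling out each FKG/RSW application.
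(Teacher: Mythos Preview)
Your proposal is correct and follows essentially the same approach as the paper: the paper gives no explicit proof of this lemma, stating only that it follows ``by standard gluing constructions illustrated in Figure \ref{fig: drawing3}'', and your write-up simply fills in those standard constructions (the three disjoint source events, the FKG/RSW gluing, and the second-moment argument for the defect edge borrowed verbatim from Lemma \ref{lem: C0c-case-A-lwrbd}), exactly as the paper intends. Your closing remark that you would ultimately present the construction via the figure rather than spell out each FKG/RSW step matches the paper's level of detail precisely.
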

Taken together, the last two lemmas give a lower bound for the denominator in \eqref{eqn: camila} in case $M=\mathrm{dist}(e,\partial B_n)$:
\begin{equation}\label{eqn: denom-C0-c-caseB}
\P(e\in \gamma,C_0^c)\ge c\mathbb{P}( A_{3,OOC}(e,M/4))\mathbb{P}(A_{2,OC}^{hp}(p(e),5M/4,3n/4))\mathbb{P}(A_{2,OC}(n/4)).
\end{equation}

Combining the upper bound \eqref{eqn: num2} with the lower bound \eqref{eqn: denom-C0-c-caseB}, we find
\[\P(E(e,k)\mid e\in \gamma, C_0^c)\le C\mathbb{P}(E(e,k)\mid A_3(e,M/4)).\]

\subsection{Estimate on $C_0$} \label{subsection:C_0} When there exists at least one open circuit around the origin ($C_0$ occurs), the path $\gamma$, defined in Section \ref{sec: C0}, consists of portions of the successive innermost open circuits $\mathcal{C}_1,\ldots,\mathcal{C}_{\mathcal{K}}$ around the origin, as well as open paths $\sigma_1,\ldots, \sigma_{\mathcal{K}+1}$ joining the origin to $\mathcal{C}_1$, the successive circuits to each other, and finally $\mathcal{C}_{\mathcal{K}}$ to $\partial B_n$.

        We write
\begin{equation*}
\mathbb{P}(E(e,k)\mid e\in \gamma,C_0)=\frac{\P(E(e,k), e\in\gamma, C_0)}{\P(e\in \gamma, C_0)}.
\end{equation*}
The upper bound for the numerator will be obtained by a union bound along the decomposition
\[\gamma\subset \big(\cup_{m=1}^{\mathcal{K}}\mathcal{C}_m\big)\cup \big(\cup_{m=1}^{\mathcal{K}+1}\sigma_m\big),\]
using estimates close to those obtained in Section \ref{sec: 3-arm} for the volume $\# \gamma$. For the denominator, we use
\begin{equation}\label{eqn: rosalina}
\P(e\in \gamma, C_0) \geq \P(e\in \sigma_1, C_0)+ \P(e\in \cup_{m=2}^{\mathcal{K}}\sigma_m, C_0)+\P(e \in \sigma_{\mathcal{K}+1}, C_0).
\end{equation}
We then obtain lower bounds on the terms on the right side of \eqref{eqn: rosalina} by RSW/FKG constructions which force a given edge to belong to one of the portions of $\gamma$. As in the previous case, the constructions used depend on the location of the edge $e$ in $B_n$.

\subsubsection{Case A: the edge $e$ is closer to the origin than to $\partial B_n$}
In this case, we have
\[M=\mathrm{dist}(e,0).\]
We write
\begin{equation}\label{eqn: conditional-union-bd}
  \begin{split}
  \frac{\P(E(e,k), e\in\gamma, C_0)}{\P(e\in \gamma, C_0)}&\le \P(E(e,k)\mid e\in \sigma_1, C_0)+\P(E(e,k)\mid e\in \sigma_{\mathcal{K}+1}, C_0)\\
  &\quad +\P\big(E(e,k)\mid e\in \cup_{m=2}^{\mathcal{K}}\sigma_m,  C_0\big)+\frac{\P\big(E(e,k),e\in \cup_{m=1}^{\mathcal{K}}\mathcal{C}_m,C_0)}{\P(e\in \gamma,C_0)}.
  \end{split}
 \end{equation}

\emph{Estimate for $e\in \sigma_1$}.  We estimate the conditional probability
\begin{equation}\label{eqn: sigma-1-ratio}
\P(E(e,k)\mid e\in\sigma_1, C_0).
  %\frac{\P(E(e,k),e\in \sigma_1, C_0)}{\P(e\in \gamma, C_0)}.
\end{equation}
When $M=\mathrm{dist}(e,0)$ ($e$ is closer to the origin), the event $\{E(e,k), e\in \sigma_1, C_0\}$ implies
\begin{itemize}
\item a two-arm event in $B(M/4)$: an open arm from the origin to $\partial B(M/4)$, and a closed dual arm from a dual neighbor of the origin to $\partial B(M/4)^*$;
\item the occurrence of $E(e,k)$ and a three-arm event (two open, one closed dual)  in $B(e,M/4)$ and
\item an open arm in the annulus $B(2M, n)$ from $\partial B(2M)$ to $\partial B_n$.
\end{itemize}
By independence of the regions involved, we obtain the upper bound
        \begin{equation}\label{eqn: sigma1-uppr}
          \P(e\in \sigma_1, C_0, E(e,k))\le \P(A_{2,OC}(M/4))\P(E(e,k), A_{3,OOC}(e,M/4))\P(A_{1,O}(2M,n)).
          \end{equation}
 We bound the denominator in \eqref{eqn: sigma-1-ratio}, $\P(e\in \sigma_1, C_0)$, below using a construction analogous to that in Section \ref{subsection:C_0^c}, Case A, but inside an open circuit on scale $M$. See Figure \ref{fig: brian}. This forces $e\in \sigma_1$. Thus:
\begin{equation}\label{eqn: sigma1-lwr}
\P(e\in\sigma_1, C_0)\ge c\P(A_{2,OC}(M/4))\P(A_{3,OOC}(e,M/4))\,\P(A_{1,O}(2M,n)).
\end{equation}
Combining \eqref{eqn: sigma1-uppr} and \eqref{eqn: sigma1-lwr}, we have
\[\P(E(e,k)\mid e \in \sigma_1,C_0)\le C\P(E(e,k)\mid A_3(e,M/4)).\]

\begin{figure}
    \centering
    \includegraphics[width=0.5\textwidth]{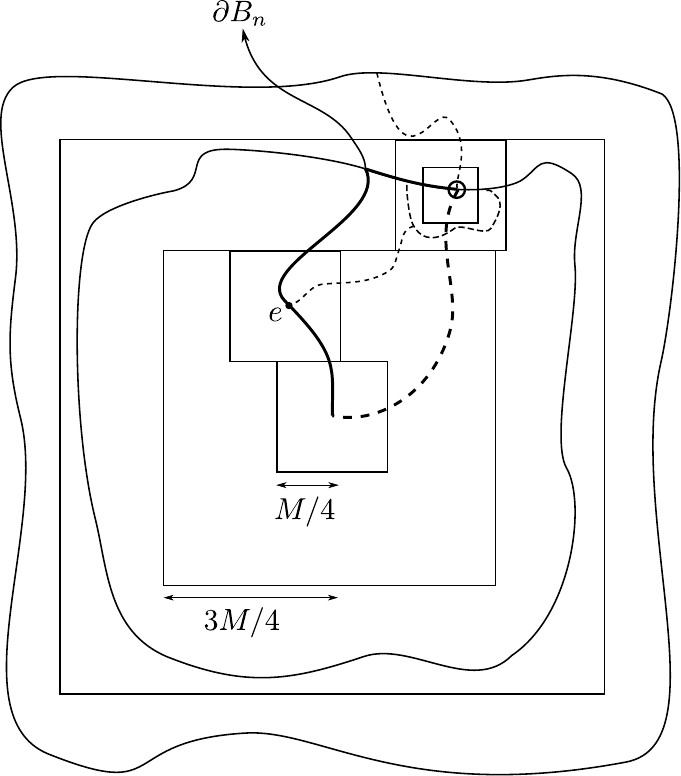}
    \caption{The construction leading to the lower bound \eqref{eqn: sigma1-lwr}.}
    \label{fig: brian}
\end{figure}

\emph{Estimate for $\sigma_{\mathcal{K}+1}$.} We now estimate the conditional probability
    \[\P(E(e,k)\mid e\in \sigma_{\mathcal{K}+1}, C_0).\]
        %\label{subsubsection:outermost}
	
	%	\begin{Lemma}[\cite{DHS16}, Lemma 10]
	%		Let $e \in \tilde{\sigma}^{\mathcal{K}+1}$ and $M = \min(\mathrm{dist}(0, e), \mathrm{dist}(e, \partial B_n))$. The edge $e$ has two disjoint open arms and one closed dual arm to distance $M$.
	%	\end{Lemma}

	%	\begin{figure}
	%		\centering
	%		\begin{minipage}{0.45\textwidth}
	%			\centering
	%			\includegraphics[width=0.9\textwidth]{Figure5a.pdf}
	%			\subcaption{When $e$ is closer to the origin.}
	%			\label{fig:5a}
	%		\end{minipage}\hfill
	%		\begin{minipage}{0.45\textwidth}
	%			\centering
	%			\includegraphics[width=0.9\textwidth]{Figure5b.pdf}
	%			\subcaption{When $e$ is closer to the boundary.}
	%			\label{fig:5b}
	%		\end{minipage}
	%		\caption{On the event $C_0$ and $e \in \tilde{\sigma}^{\mathcal{K}+1}$.}
	%	\end{figure}
	
When $e$ is closer to the origin, the event $\{E(e,k), e\in \sigma_{\mathcal{K}+1}, C_0\}$ implies
    \begin{itemize}
    \item the joint occurrence, inside $B(e,M/4)$, of the event $E(e,k)$, and a three-arm event $A_{3,OOC}(e,M/4)$;
    \item a two-arm event (one open arm and one closed dual arm) in the annular region $B(2M, n)$; and
    \item the existence of an open arm connection in $B(M/4)$.
    \end{itemize}
    This gives the upper bound:
        \begin{equation*}
          \P(E(e,k),e\in \sigma_{\mathcal{K}+1},C_0)\le \P(A_{1,O}(M/4))\P(E(e,k),A_{3,OOC}(e,M/4))\P(A_{2,OC}(2M,n)).  \end{equation*}
	By the construction illustrated in Figure \ref{fig: drawing-sun}, we also have the lower bound
    \begin{equation}\label{eqn: new-lower}
        \P(e \in \sigma_{\mathcal{K}+1},C_0)\ge c\P(A_{1,O}(M/4))\P(A_{3,OOC}(e,M/4))\P(A_{2,OC}(2M,n)).
    \end{equation}
    Combining the previous two estimates, we obtain
    \[\P(E(e,k)\mid e\in \sigma_{\mathcal{K}+1}, C_0)\le C\P(E(e,k)\mid A_3(e,M/4)).\]
    \begin{figure} [hpt]
		\centering
		\includegraphics[width=0.5\textwidth]{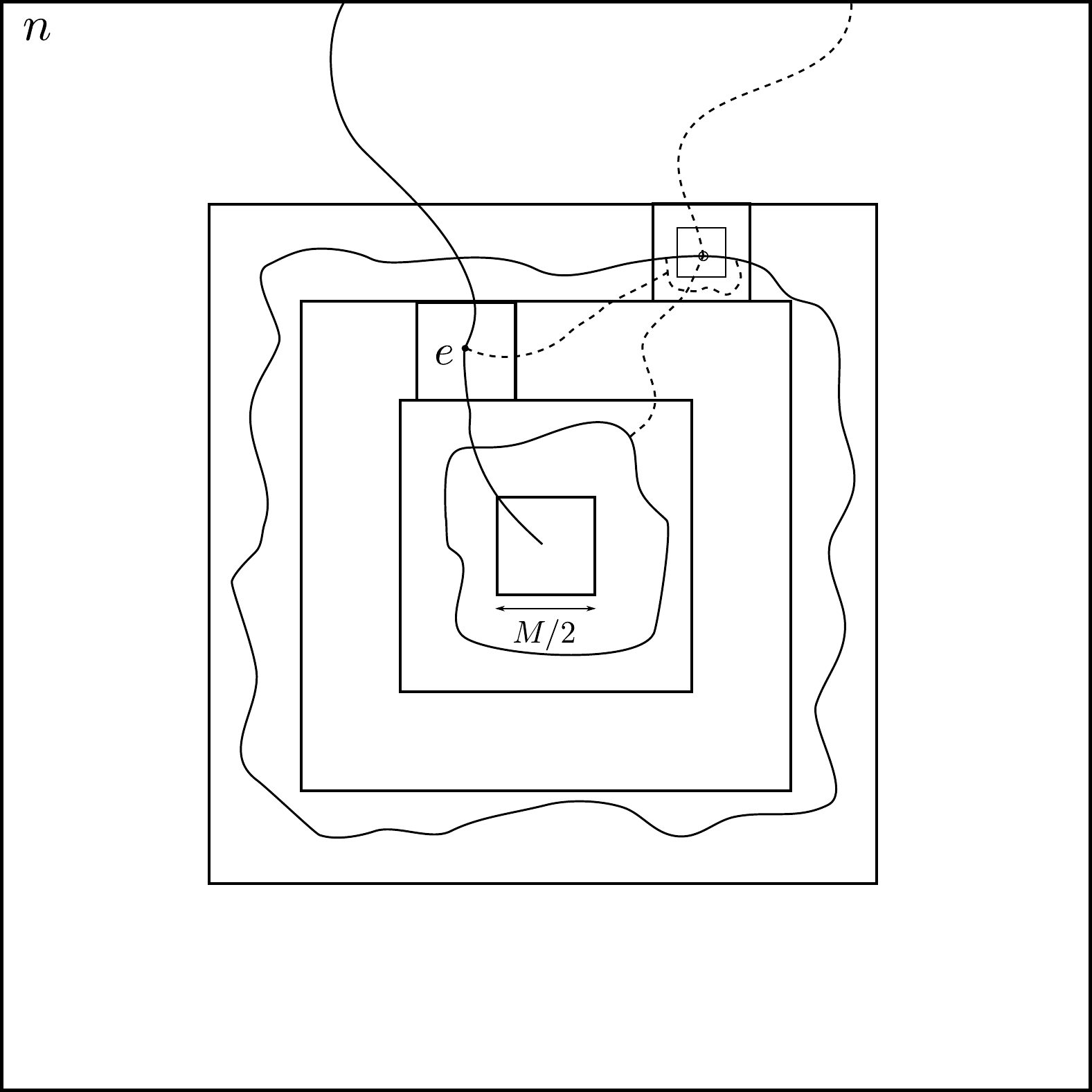}	
		\caption{The construction for the lower bound \eqref{eqn: new-lower}.}
        \label{fig: drawing-sun}
    \end{figure}	
                      
\emph{The intermediate paths $\sigma_m$ for $m=2,\dots,\mathcal{K}$.} \label{subsubsection:middlelayers}
We estimate the conditional probability
\begin{equation}\label{eqn: intermediate}
\P( E(e,k)\mid e\in\cup_{m=2}^{\mathcal{K}}\sigma_m, C_0).
\end{equation}
Here we use a modification of Lemma \ref{lemma:A_3} from Section \ref{sec: 3-arm} to obtain an upper bound. Let $k'=k^{12/11}$, so that $k'\le (M/4)^{54/55}\ll M$.
  
\begin{Lemma}\label{lem: catarina} Suppose $e\in \sigma_m$ for some $m=2,\ldots, \mathcal{K}$, and both $A_{3,OOC}(e,k)$ and $E(e,k)$ occur. Then either (i) there exists $\log (k') < \ell_1 \le \cdots \le \ell_R \le \lfloor\log (M/4)\rfloor$ such that
\begin{enumerate}
    \item $E(e,k)$ and $A_{3,OOC}(e,k')$ occur;
    \item there are two open arms and one closed dual arm, inside $B(e,k',2^{\ell_1 - 1})$, from $\partial B(e,k')$ to $\partial B(e,2^{\ell_1 - 1})$;
    \item for $i\ge 2$, if $\ell_{i-1} < \lfloor \log(M/4)\rfloor$, there are $2i$ open arms and one closed dual arm from $\partial B(e,2^{\ell_{i-1}})$ to $\partial B(e,2^{\ell_i - 1})$; 
    \item if $\ell_R < \lfloor \log (M/4)\rfloor$, there are $2R+2$ disjoint open arms from $\partial B(e,2^{\ell_R})$ to $\partial B(e,M(e))$; and
    \item there is an open arm from the origin to distance $M/4$ and one open arm from $\partial B(2M)$ to $\partial B_n$.
\end{enumerate}
or (ii) there are $0 = \ell_0 \le \ell_1 \le \cdots \le \ell_{R} \le \lfloor\log (M/4)\rfloor$ and $\ell_1\le \log(k')$ such that
\begin{enumerate}
    \item $A_{3,OOC}(e, 2^{\ell_1-1})$ occurs;
    \item for $i\ge 2$, if $\ell_{i-1}<\lfloor\log (M/4)\rfloor$, there are $2i$ disjoint open arms and one closed dual arm from $\partial B(e, 2^{\ell_{i-1}})$ to $\partial B(e,2^{\ell_i-1})$;
    \item if $\ell_R < \lfloor\log (M/4)\rfloor$, there are $2R+2$ disjoint open arms from $\partial B(e,2^{\ell_{R}})$ to $\partial B(e,M/4)$; and
    \item there is an open arm from the origin to distance $M/4$ and one open arm from $\partial B(2M)$ to $\partial B_n$.
\end{enumerate}
\end{Lemma}
The second case occurs when there is no space between $\mathcal C_m$ and $\mathcal C_{m+1}$ to fit the box $B(e, k')$.

The estimate in case $(ii)$ uses an adaptation of the idea in Lemma \ref{lemma:A_3}:
\begin{align*}
    \P(E(e,k), & \, e\in \cup_{m=2}^{\mathcal{K}}\sigma_m, C_0, \text{case } (ii))\\
    \le & C \P(A_{1,O}(M/4)) \P(A_{1,O}(2M, n)) \\
    &\times \sum_{0=l_0 \le \ell_1 \le \log(k')}\pi_3(2^{\ell_1})\sum_{\ell_1\le \ell_2\le \ldots \le \ell_{R}} \prod_{i=2}^R \pi_{2i+1}(2^{\ell_{i-1}}, 2^{\ell_i-1}) \pi_{2R+2}'(2^{\ell_R},M/4).
\end{align*}
For $i=3,\dots, R$, we use Reimer's inequality in the form
\begin{equation*}
    \pi_{2i+1}(2^{\ell_{i-1}}, 2^{\ell_i-1}) \le \pi_5(2^{\ell_{i-1}}, 2^{\ell_i-1}) \pi_{2i-4}'(2^{\ell_{i-1}}, 2^{\ell_i-1}).
\end{equation*}
By  similar computations to those in Section \ref{sec: intermediate}, we obtain a bound for the inner sum over $\ell_1,\ldots,\ell_R$ of $c^{R-2}\pi_5(2^{\ell_1},2^{\ell_R})$.
Using Reimer's inequality, quasi-multiplicativity, and \eqref{eqn:stopcondition}, we are left with 
\begin{align*}
    &\P(E(e,k), e\in \cup_{m=2}^{\mathcal{K}}\sigma_m, C_0, \text{case } (ii)) \\
    \quad \le~ & C \P(A_{1,O}(M/4)) \P(A_{1,O}(2M, n)) \sum_{0\le \ell_1 \le \log(k')} \pi_3(2^{\ell_1}) \sum_{\ell_R \le \log(M/4)} \pi_5(2^{\ell_1},2^{\ell_R}) \pi_3(2^{\ell_R}, M/4) \left(\frac{2^{\ell_R}}{M/4}\right)^\epsilon \\
    \quad \le~ & C  \P(A_{1,O}(M/4)) \P(A_{1,O}(2M, n)) \pi_3(M/4) \sum_{\ell_R\le \log(M/4)} \left(\frac{2^{\ell_R}}{M/4}\right)^\epsilon \sum_{\ell_1 \leq \log(k')} 2^{-2\alpha(\ell_R-\ell_1)} \\
    \quad \le~ & C  \P(A_{1,O}(M/4)) \P(A_{1,O}(2M, n)) \pi_3(M/4) \left(\frac{4k'}{M}\right)^{2\alpha}.
\end{align*}
Since $k$ is chosen to satisfy $k\le (M/4)^{9/10}$, the right side of the final quantity above is then bounded by
\[C  \P(A_{1,O}(M/4)) \P(A_{1,O}(2M, n))\pi_3(M/4) (M/4)^{-r}\]
for some $r>0$.

For the first case $(i)$ of Lemma \ref{lem: catarina}, similarly to Lemma \ref{lemma:A_3} and in the previous case, the probability of items (2) -- (4) is bounded by
\begin{equation*}
    \sum_{\lceil\log(k')\rceil < \ell_1 \le \dots \le \ell_R}^{\lfloor \log (M/4)\rfloor} \P(A_{3, OOC}(e, k', 2^{\ell_1-1})) \prod_{i=1}^{R-1} \pi_{2i+1}(2^{\ell_i}, 2^{\ell_{i+1}-1})) \pi'_{2R+2}(2^{\ell_R}, M/4) \le C \pi_3(k', M/4).
\end{equation*}

 We now claim:
\begin{equation}\label{eqn: arm-sep}
\begin{split}
    &\P(E(e,k), A_{3, OOC}(e, k')) \P(A_{3, OOC}(k', M/4))\\
    \le~&C \P(E(e,k), A_{3, OOC}(e, M/4)) + C(k/k')^\delta \pi_3(M/4),
\end{split}
\end{equation}
for some $\delta>0$. Note that if we remove the event $E(e,k)$ from the probability on both sides, \eqref{eqn: arm-sep} follows immediately from arm separation and gluing, as summarized in Section \ref{subsection:gluing}. However, the presence of the event $E(e,k)$ could bias the probability of $A_{3,OOC}(e,k')$ and thus prevent arm separation on the boundary of $B(e,k')$. This can be circumvented by another classical technique: decoupling at interfaces. The interfaces we consider here are closed dual circuits around the origin with two open edge-defects. We will be brief in our presentation. We refer the reader to \cite[Section 6]{DS11}, \cite[Proposition 6.4]{DHS16}, and \cite[Lemma 4.4]{DHS21} for related estimates proved using this technique.

We introduce the event $S_{k,k'}$ that there is a closed dual circuit with two open defects in some annulus $B(e,2^j,2^{j+1})$, $j=\lfloor \log(2k)\rfloor, \ldots, \lfloor \log(k')\rfloor$. We begin by excluding the event $S_{k,k'}$. Using the argument in \cite[Theorem 4.1]{DHS21}, there is a $\delta>0$ such that
\[\mathbb{P}(S_{k,k'}^c\mid A_{3,OOC}(e,k'))\le C(k/k')^\delta.\]
Thus,
\begin{equation}\label{eqn: inter-2}
\mathbb{P}(E(e,k), A_{3,OOC}(e,k'))\le \mathbb{P}(E(e,k), A_{3,OOC}(e,k'), S_{k,k'}) + C(k/k')^\delta \pi_3(k').
\end{equation}
For any closed dual circuit $D$ with two open defects, the two defects partition the circuit into two arcs. For the deterministic ordering fixed at the beginning of Section \ref{sec: gamma-def}, let the arc with the smallest edge be denoted $\mathrm{Arc}_1(D)$, and the other $\mathrm{Arc}_2(D)$. We define $X_-(D,i)$ be the event that $e$ is connected to $D$ by three disjoint arms: two open arms from $e$ to the two defects, and a closed dual arm to $\mathrm{Arc}_i(D)$. Similarly, we let $X_+(D,k',i)$ be the event that $D$ is connected to the boundary of the box $B(e,k')$ by three disjoint arms: two open arms emanating from the two defects, and a closed dual arm from $\mathrm{Arc}_i(D)$. 

We then partition the event $\{E(e,k), A_{3,OOC}(e,k'), S_{k,k'}\}$ at the innermost circuit with defects $\mathcal{D}$ in the region $B(e,2k,k')$:
\begin{equation} \label{eqn: orange}
\mathbb{P}(E(e,k), A_{3,OOC}(e,k'), S_{k,k'}) =\sum_D \sum_{i=1,2}\mathbb{P}\big(E(e,k), \mathcal{D}=D, X_-(D,i)\big)\mathbb{P}(X_+(D,k',i)). 
\end{equation}
By the $3$-arm analogue of external arm separation as in \cite[Lemma 6.2]{DS11}, we have
\begin{equation}\label{eqn: india}
\mathbb{P}(X_+(D,k',i))\le C\mathbb{P}(\tilde{X}_+(D,k',i)),
\end{equation}
where in the $\tilde{X}_+(D,k',i)$ the arms from the circuit with defects to $\partial B(e,k')$ are well-separated and have given landing sites. See \cite[Definitions 7 \& 9]{Nolin08} for the relevant definitions.
Inserting \eqref{eqn: india} into \eqref{eqn: orange}, we have now showed that
\[\mathbb{P}(E(e,k),A_{3,OOC}(e,k'), S_{k,k'})\le C\mathbb{P}(E(e,k),\tilde{A}_{3,OOC}(e,k')),\]
where the arms on $\partial B(e,k')$ in the event $\tilde{A}_{3,OOC}(e,k')$ are well-separated and have their endpoints in specified intervals on the boundary. We can now use standard gluing to obtain that
\[\mathbb{P}(E(e,k),\tilde{A}_{3,+,OOC}(e,k'))\pi_3(k',M)\le C\mathbb{P}(E(e,k), A_{3,OOC}(e,M)).\]
The claim \eqref{eqn: arm-sep} now follows from this and \eqref{eqn: inter-2}, using $\pi_3(k')\pi_3(k',M)\le C\pi_3(e,M)$.

%By the $3$-arm analogue of external arm separation \cite[Lemma 6.2]{DS11} and some standard gluing arguments between scales $k$ and $2k$, we have 
%This is an analogue to quasi-multiplicativity for $3$-arm events. Note that the additional $\pi_3$ probability accounts for when external arm separation fails. }

Together with items (1) and (5), we have the estimate
\begin{equation}\label{eqn: decompose}
\begin{split}
  & \P(E(e,k), e\in \cup_{m=2}^{\mathcal{K}}\sigma_m, C_0, \text{case }(i))\\
  & \quad \le \P(A_{1,O}(M/4)) \P(A_{1,O}(2M, n))\P(E(e,k), A_{3,OOC}(e,k')) C \pi_3(k', M/4) \\
  & \quad \le  C'\P(A_{1,O}(M/4))\P(A_{1,O}(2M, n))\left(\P(E(e,k),A_{3,OOC}(e,M/4)) + (k/k')^\delta\pi_{3}(M/4)\right).
\end{split}
\end{equation}

Thus, we obtain the upper bound
\begin{equation}\label{eqn: upper-bd-sigmam}
\begin{split}
    &\mathbb{P}(E(e,k), e\in \cup_{m=2}^{\mathcal{K}} \sigma_m, C_0) \\
    \le &~ C\left(\P(E(e,k), A_{3,OOC}(M/4))+ M^{-r}\pi_3(M/4)\right) \P(A_{1,O}(M/4))\P(A_{1,O}(2M, n)).
\end{split}
\end{equation}

For the denominator in \eqref{eqn: intermediate}, we have the following lower bound, a consequence of the construction illustrated in Figure \ref{fig: drawing2}:
\begin{equation}\label{eqn: lwr-bd-sigmam}
    \mathbb{P}( e\in \cup_{m=2}^{\mathcal{K}}\sigma_m, C_0)\ge c\P(A_{3,OOC}(M/4))\P(A_{1,O}(M/4))\P(A_{1,O}(2M, n)).
\end{equation}

Finally, by \eqref{eqn: upper-bd-sigmam} and \eqref{eqn: lwr-bd-sigmam}, we have
\[\frac{\P(E(e,k), e\in \cup_{m=2}^{\mathcal K}\sigma_m, C_0)}{\P(e\in \cup_{m=2}^{\mathcal K}\sigma_m, C_0)}\le C(\P(E(e,k)\mid A_3(e,M/4)) + M^{-r}).\]

\begin{Definition}\label{def: arem} For later use, we denote the event defined in Lemma \ref{lem: catarina}, case (ii) by $\mathcal{A}_R(e,M/4)$, where it is always understood that $R$ is chosen such that \eqref{eqn:stopcondition} holds. The previous computations show that
    \[\mathbb{P}(\mathcal{A}_R(e,M/4))\le c M^{-r} \pi_3(M/4),\]
for some $r>0$.    
\end{Definition}

\begin{figure}[hpt]
	\centering
	\includegraphics[width=0.55\textwidth]{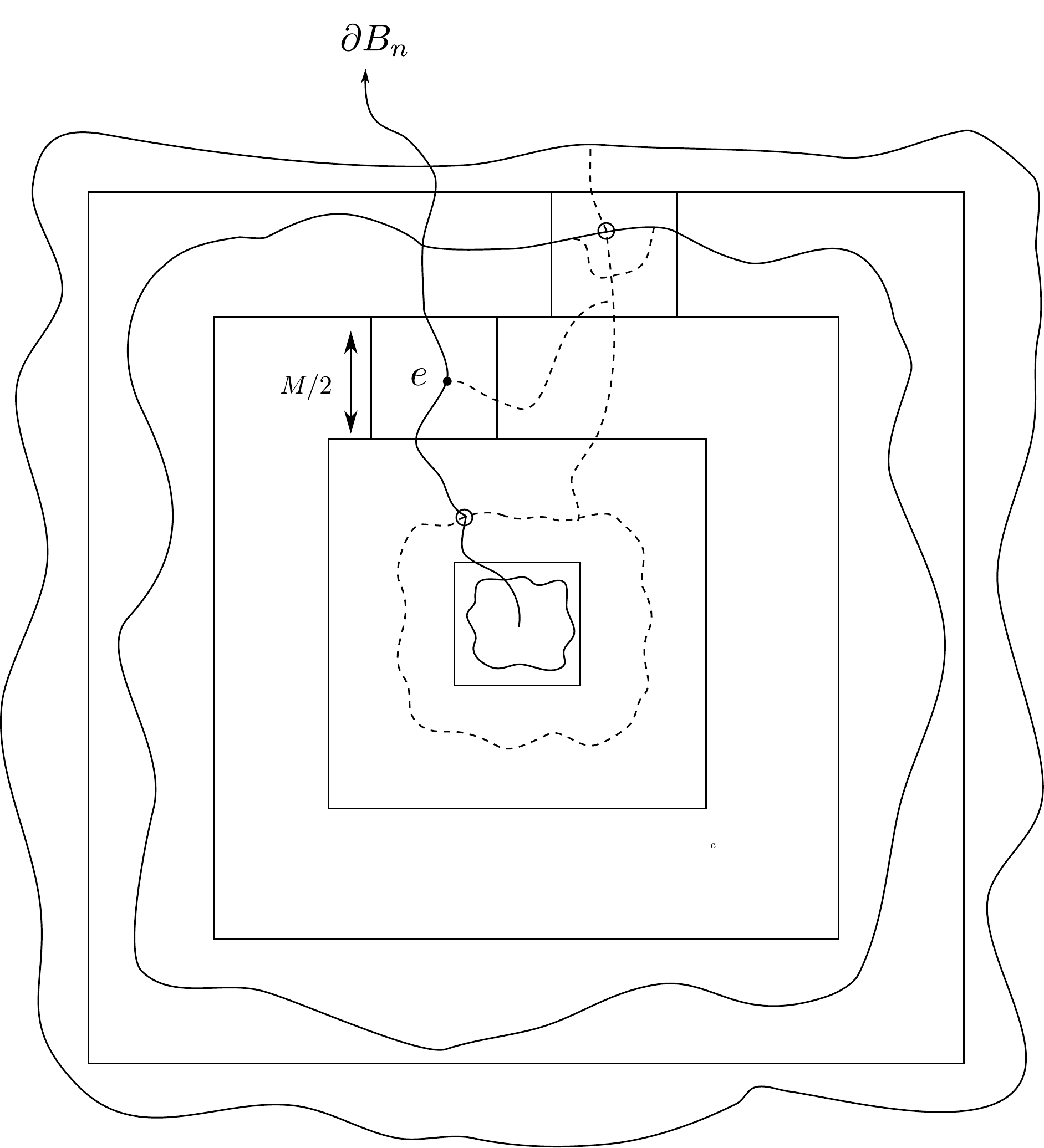}
	\caption{The construction used to prove the lower bound \eqref{eqn: lwr-bd-sigmam}.}
	\label{fig: drawing2}
\end{figure}

\emph{The circuits $\mathcal{C}_m$, $m=1,\ldots,\mathcal{K}$.} This case is dealt with similarly to the probability of the event $\{e\in \cup_{m=2}^{\mathcal{K}}\sigma_m, C_0\}$. The upper bound in this case is of the same form as \eqref{eqn: upper-bd-sigmam}:
\begin{align*}
  &\mathbb{P}(E(e,k),e\in \cup_{m=1}^{\mathcal{K}}\mathcal{C}_m, C_0)\\
  \quad &\le C(\P(E(e,k),A_{3,OOC}(e,M/4))+
 M^{-r}\pi_3(e,M/4))\P(A_{1,O}(M/4))\P(A_{1,O}(2M,n)).
\end{align*}
To control the denominator in the last term of \eqref{eqn: conditional-union-bd}, we use the lower bound \eqref{eqn: lwr-bd-sigmam}
\begin{equation*}
\P(e\in \gamma, C_0)\ge \P(e\in \cup_{m=2}^\mathcal{K} \sigma_m,C_0)\ge c\P(A_{3,OOC}(e,M/4))\P(A_{1,O}(M/4))\P(A_{1,O}(2M,n)).
\end{equation*}

\subsubsection{Case B: $e$ is closer to $\partial B_n$ than the origin.}\label{lem: mercedes}
In this case, we estimate the ratio
\begin{equation*}
\frac{\P(E(e,k), e\in\gamma, C_0)}{\P(e\in \gamma, C_0)}
\end{equation*}
using the lower bound \eqref{eqn: rosalina} instead of \eqref{eqn: conditional-union-bd}. %In particular, we treat the case where the edge $e$ is close to the boundary of the box $B_n$, i.e. $M\le n/10$, since it requires constructions involving half-plane events. When $e$ is towards the center of $B_n$, regular full-plane events are sufficient.

\emph{Estimate for $e\in \sigma_1$}. We estimate
\[\frac{\P(E(e,k), e\in\sigma_1, C_0)}{\P(e\in \gamma, C_0)}.\]
When the edge $e$ is closer to the boundary ($M=\mathrm{dist}(e,\partial B_n)$), the following occur:
\begin{enumerate}
    \item three arms, two open and one closed dual, inside $B(e,M/4)$ from $e$ to $\partial B(e,M/4)$;
    \item four arms, three open and one closed dual, in the semi-annular region $B_n\cap B(p(e),5M/4,3n/4)$ and
    \item two arms, one open and one closed dual, from the origin to distance $n/4$.
\end{enumerate}

  By independence, this results in the bound
  \begin{equation}\label{eqn: sigmaK-upper}
    \P(e\in \sigma_1, C_0)\le \P(A_{3,OOC}(e,M/4),E(e,k))\P(A_{2,OC}(n/4))\P(A_{2,OC}^{hp}(5M/4,3n/4)).
  \end{equation}
  For the lower bound, we use the construction illustrated in Figure \ref{fig: drawing5} to obtain the estimate
    \begin{equation}\label{eqn: sigmaK-lwr}
    \begin{split}
        \P(e\in \gamma,  C_0)&\ge \P(e\in \sigma_{\mathcal{K}+1}) \\
        &\ge c\P(A_{3,OOC}(e,M/4))\P(A_{1,O}(n/4))\P(A_{2,OC}^{hp}(5M/4,3n/4))\\
        &\ge c\P(A_{3,OOC}(e,M/4))\P(A_{2,OC}(n/4))\P(A_{2,OC}^{hp}(5M/4,3n/4)).
    \end{split}
    \end{equation}
    Using \eqref{eqn: sigmaK-upper} and \eqref{eqn: sigmaK-lwr}, we have
    \[\frac{\P(E(e,k), e\in \sigma_1, C_0)}{\P(e\in \gamma, C_0)}\le C\P(E(e,k)\mid A_3(e,M/4)).\]

\emph{Estimate for $e\in \sigma_{\mathcal{K}+1}$.} When $e$ is closer to the boundary ($M=\mathrm{dist}(e,\partial B_n)$), we use the following lemma to obtain an upper bound for the probability $\P(E(e,k),e\in \sigma_{\mathcal{K}+1}, C_0)$.
	\begin{Lemma}\label{lem: star}
    	Suppose $e\in \sigma_{\mathcal{K}+1}$ and $M=\mathrm{dist}(e,\partial B_n)$. If $E(e,k)\cap C_0$ occurs, then there exists $\log(2k)< j\le l \le \log(3n/4)$ such that
    	\begin{enumerate}
        \item $E(e,k)$ and the three-arm event $A_{3,OOC}(e,M/4)$ occur;
        \item there are two arms in $B_n\cap B(p(e),5M/4,2^{j-1})$, one open and one closed dual, from $\partial B(p(e),5M/4)$ to $\partial B(p(e),2^{j-1})$;
        \item there are three arms, two open and one closed dual, in $B_n\cap B(p(e),2^j,2^{l-1})$ from $\partial B(p(e),2^j)$ to $\partial B(p(e),2^{l-1})$;
        \item there are four open arms inside $B_n\cap B(p(e),2^l,3n/4)$ and
        \item there is an open arm in $B(n/4)$ from the origin to $\partial B(n/4)$.
        \end{enumerate}
        \begin{proof}
        	On $C_0$, enumerate the successive innermost circuits as $\mathcal{C}_1,\dots,\mathcal{C}_{\mathcal{K}}$. If $e\in \sigma_{\mathcal{K}+1}$, then $e$ and $p(e)$ lie outside $\mathcal{C}_{\mathcal{K}}$. We let $j$ be the least integer such that $B(p(e),2^j)\cap \mathcal{C}_{\mathcal{K}}\neq \emptyset$, and $l\ge j$ be the least $l$ such that $B(p(e),2^l)\cap \mathcal{C}_{\mathcal{K}-1}\neq \emptyset$ if $\mathcal{K}\ge 2$. Otherwise, we set $l=\lceil \log(3n/4)\rceil$. It is now easy to check that the claims regarding the arm events hold.
        \end{proof}
    \end{Lemma}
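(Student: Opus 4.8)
The plan is to proceed exactly as for Lemma \ref{lemma:A_3}, but for the outermost path $\sigma_{\mathcal{K}+1}$ and with all scales beyond $M$ measured from the boundary projection $p(e)$, so that the arm events past scale $M$ are half-plane events. I would start from the deterministic arm structure at $e$: since $\sigma_{\mathcal{K}+1}$ is the subpath of $\gamma$ joining $\mathcal{C}_{\mathcal{K}}$ to $\partial B_n$ chosen closest to the counterclockwise side of the closed dual path $\mathfrak{c}$ from $\mathcal{C}_{\mathcal{K}}$ to $\partial B_n$, and $e\in\sigma_{\mathcal{K}+1}$, the two subarcs of $\sigma_{\mathcal{K}+1}$ emanating from $e$ are two disjoint open arms and $e^*$ is joined, by a closed dual path, to $\mathfrak{c}$, which produces a disjoint closed dual arm; since $\mathrm{dist}(e,\partial B_n)=M$ these all cross $B(e,n_0,M)$, giving the first bullet. (That $\sigma_{\mathcal{K}+1}$ is well-defined already uses $0\leftrightarrow\partial B_n$.)

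Next I would define the two stopping scales as in the sketch: $j$ the least integer with $B(p(e),2^j)\cap\mathcal{C}_{\mathcal{K}}\ne\emptyset$, and (if $\mathcal{K}\ge 2$) $l\ge j$ the least integer with $B(p(e),2^l)\cap\mathcal{C}_{\mathcal{K}-1}\ne\emptyset$, both capped at $\lceil\log(n/4)\rceil$ (with $l=\lceil\log(n/4)\rceil$ when $\mathcal{K}=1$). The mechanism is standard: between scales $2M$ and $2^j$ only the continuation of $\sigma_{\mathcal{K}+1}$ toward $\mathcal{C}_{\mathcal{K}}$ and the closed dual arm are present, giving one open and one closed dual arm in $B_n\cap B(p(e),2M,2^j)$; once $B(p(e),\cdot)$ meets $\mathcal{C}_{\mathcal{K}}$, its two halves supply two disjoint open arms that must leave any box around $p(e)$ not containing the origin, and there is still a closed dual arm available (carried by the dual connection separating $\mathcal{C}_{\mathcal{K}-1}$ from $\mathcal{C}_{\mathcal{K}}$) as long as $B(p(e),\cdot)$ has not yet reached $\mathcal{C}_{\mathcal{K}-1}$, giving two open and one closed dual arm in $B_n\cap B(p(e),2^j,2^l)$; once $\mathcal{C}_{\mathcal{K}-1}$ is also reached, its two halves add two more open arms, giving four open arms in $B_n\cap B(p(e),2^l,n/4)$. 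Finally, $0\leftrightarrow\partial B_n$ and $\mathrm{dist}(e,0)\ge n/2$ (Case B) give an open arm from $0$ to $\partial B(0,n/4)$ along the initial part of $\gamma$, and since $\mathrm{dist}(0,p(e))=n$ the box $B(0,n/4)$ is disjoint from $B(p(e),n/4)$, so this arm is disjoint from all the rest: the last bullet.

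The argument is deterministic throughout — no RSW or FKG — so the main obstacle is purely combinatorial bookkeeping: (i) confirming that in each annulus the listed arms are genuinely realized and pairwise disjoint, which requires care about which closed dual path carries the arm in the middle annulus $B_n\cap B(p(e),2^j,2^l)$ and about the portions of $\mathcal{C}_{\mathcal{K}}$ and $\mathcal{C}_{\mathcal{K}-1}$ that do the work; (ii) handling the degenerate cases $\mathcal{K}=1$ (no $\mathcal{C}_{\mathcal{K}-1}$, third bullet absorbed into the fourth) and $\mathcal{C}_{\mathcal{K}}$ within distance $2M$ of $p(e)$ (so $j<\log(2M)$ and the second bullet is vacuous); and (iii) verifying $\log(2k)<j\le l\le\log(n/4)$, which uses $100k<M$ and the Case B hypothesis that $e$ is far from the corners of $\partial B_n$, ensuring the half-annuli around $p(e)$ up to scale $n/4$ both lie in a half-plane and avoid $B(0,n/4)$. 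With these organized, the five displayed arm events are immediate.
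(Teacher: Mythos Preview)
Your proposal is correct and follows essentially the same approach as the paper: you define the same stopping scales $j$ and $l$ (first intersection of $B(p(e),\cdot)$ with $\mathcal{C}_{\mathcal{K}}$ and $\mathcal{C}_{\mathcal{K}-1}$ respectively, capped at $\lceil\log(n/4)\rceil$), and then read off the arm structure in each successive annulus from the innermost-circuit duality and the geometry of $\sigma_{\mathcal{K}+1}$. The paper's own proof is a three-line sketch (``It is now easy to check\ldots''), so your write-up is in fact more detailed, and your explicit list of bookkeeping issues---the degenerate cases $\mathcal{K}=1$ and $\mathcal{C}_{\mathcal{K}}$ already within distance $2M$ of $p(e)$, the disjointness verification, and the range constraints on $j,l$---correctly identifies the points the paper leaves implicit.
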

            
Decomposing according to the distances $2^j, 2^l$, we estimate the ratio 
\[\frac{\P(E(e,k), e\in \sigma_{\mathcal{K}+1}, C_0)}{\P(A_{1,O}(n/4))\P(E(e,k), A_{3,OOC} (e,M/4))}\]
by the sum
\begin{equation}\label{eqn: decompose-2}
	\begin{split}
	&~ \sum_{\log(2k)<j\le l \le \log(n/4)}\P(A_{2,OC}^{hp}(p(e),5M/4,2^{j-1}))\P(A_{3,OOC}^{hp}(p(e),2^j,2^{l-1}))\P(A_{4,OOOO}^{hp}(2^l,3n/4))\\
	\le&~ C\sum_{\log(2k)<j\le l \le \log(n/4)}\P(A_{2,OC}^{hp}(p(e),5M/4,2^j))\P(A_{2,OC}^{hp}(p(e),2^j,2^l))\frac{2^{\epsilon j}}{2^{\epsilon l}}\P(A_{2,OO}^{hp}(2^l,3n/4))\frac{2^{2 \epsilon l}}{n^{2\epsilon}}\\
	\le&~ C'\P(A_{2,OC}^{hp}(p(e),5M/4,3n/4)).
	\end{split}
\end{equation}
This gives the upper bound
\begin{equation}\label{eqn: upper-bound-sK+1-caseA}
	\P(E(e,k), e\in \sigma_{\mathcal{K}+1},C_0)\le C'\P(A_{1,O}(n/4))\P(E(e,k),A_{3,OOC}(e,M/4))\P(A_{2,OC}^{hp}(p(e),5M/4,3n/4)).
\end{equation}
The main estimate for the denominator $\P(e\in \gamma, C_0)$ in this case is
\begin{equation}\label{eqn: lwr-bound-sK+1-caseB}
\P(e\in \gamma,C_0)\ge \P(e\in \sigma_{\mathcal{K}+1},C_0)  \ge c\P(A_{1,O}(n/4))\P(A_{3,OOC}(e,M/4))\P(A_{2,OC}^{hp}(p(e,5M/4,3n/4))).
\end{equation}
This is obtained by the construction illustrated in Figure \ref{fig: drawing5}.
\begin{figure}[hpt]
	\centering
	\includegraphics[width=0.50\textwidth]{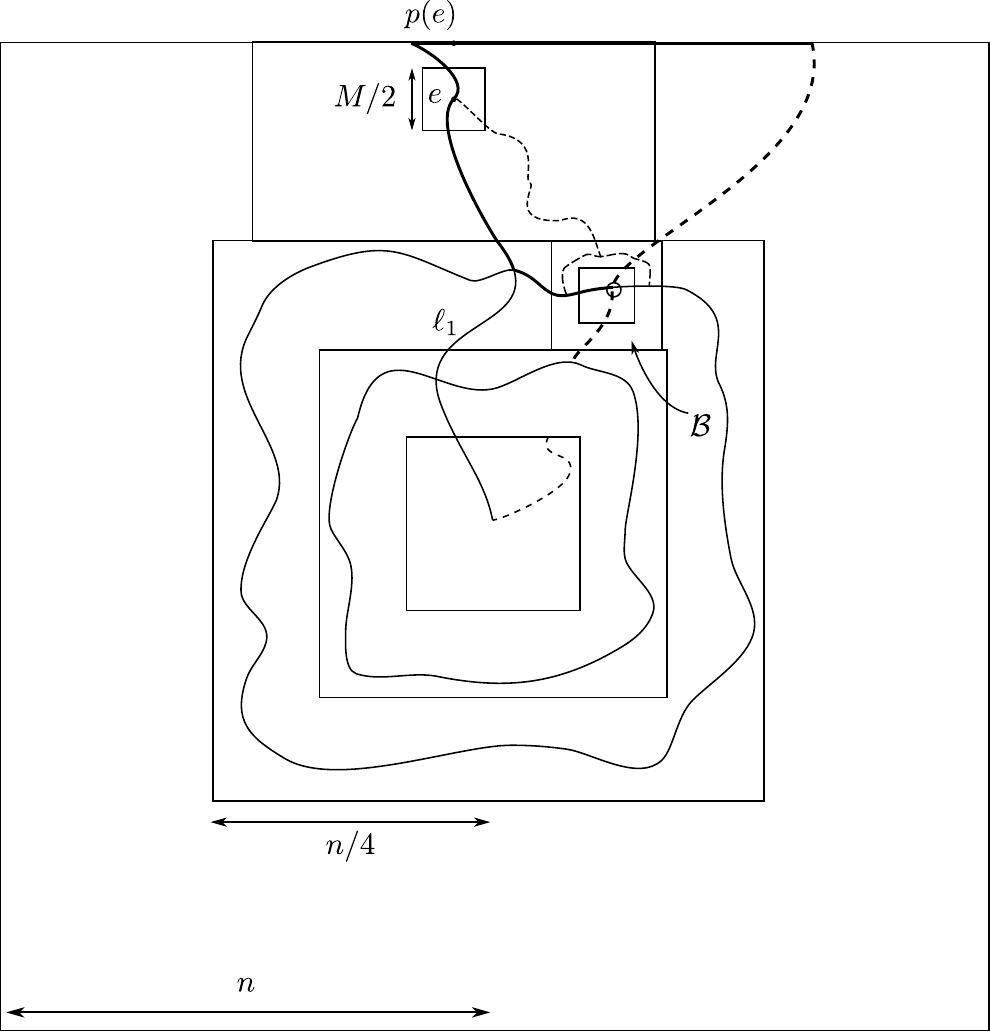}
	\caption{The construction near the boundary used to obtain the lower bound \eqref{eqn: lwr-bound-sK+1-caseB}.}
	\label{fig: drawing5}
\end{figure}
Combining \eqref{eqn: upper-bound-sK+1-caseA} and \eqref{eqn: lwr-bound-sK+1-caseB}, we obtain the desired estimate:
\[\P(E(e,k)\mid e\in \sigma_{\mathcal{K}+1},C_0)\le C\P(E(e,k) \mid A_3(e,M/4)), \quad \text{ if } M=\mathrm{dist}(e,\partial B_n).\]
	
\emph{Estimate for $e\in \cup_{m=2}^{\mathcal{K}}\sigma_m$.} When $M=\mathrm{dist}(e,\partial B_n)$, we have the upper bound
\begin{equation}\label{eqn: sigmas}
\begin{split}
    \P(E(e,k),e\in \cup_{m=2}^{\mathcal{K}}\sigma_m,C_0 )
    &\le (C\P(A_{3,OOC}(e,M/4),E(e,k)) +  M^{-r}\pi_3(e,M/4))) \\
    &\qquad \cdot \P(A_{1,O}(e,n/4))\P(A_{2,OC}^{hp}(p(e),5M/4,3n/4)).
\end{split}
\end{equation}
This estimate follows from the following lemma, which is proved along the same lines as Lemma \ref{lemma:A_3} in Section \ref{sec: 3-arm} and Lemma \ref{lem: star} in Section \ref{lem: mercedes}.
\begin{Lemma}\label{lem: either-or}
Suppose $M=\mathrm{dist}(e,\partial B_n)$. Then,
\begin{enumerate}
    \item 
    \begin{enumerate}
        \item either the event $E(e,k)$ and the three-arm event $A_{3,OOC}(e,M/4)$ occur;
        \item or the event $\mathcal{A}_R(e,M/4)$ in Definition \eqref{def: arem} occurs;
    \end{enumerate}
    \item there are two open arms in $B_n \cap B(p(e),5M/4,3n/4)$ and;
    \item there is an open arm in $B(e,n/4)$ from the origin to $\partial B(e,n/4)$.
\end{enumerate}

\end{Lemma}

\begin{proof} [Proof sketch.]
    The proof for item $(1)$ is very similar to the proof for Lemma \ref{lemma:A_3}. If there is no closed dual arm to distance $M/4$, then $\mathcal{C}_m$ or $\mathcal{C}_{m+1}$ intersects $B(e,M/4)$. We can then find a sequence of scales $\ell_1 \le \ell_2 \le \cdots \le \ell_R$ such that the annulus on each scale contains two more open arms than the preceding one. For item $(2)$, there are at least two arms consisting of portions of $\mathcal{C}_{m+1}$ in $B_n \cap B(p(e),5M/4,3n/4)$.
\end{proof}

Applying the lemma and decomposing according to the distance $j$ as in \eqref{eqn: decompose}, \eqref{eqn: decompose-2}, we obtain the estimate \eqref{eqn: sigmas}.

For the denominator, we use the lower bound \eqref{eqn: lwr-bound-sK+1-caseB} established for the event $\{e\in \sigma^{\mathcal{K}+1}\}$.
\begin{equation}\label{eqn: sigmas-lwr}
\mathbb{P}( e\in \sigma_{\mathcal{K}+1}, C_0)\ge c\P(A_{3,OOC}(e,M/4))\P(A_{2,OC}^{hp}(p(e),5M/4,3n/4))\P(A_{1,O}(n/4)).
\end{equation}
Combining \eqref{eqn: sigmas} and \eqref{eqn: sigmas-lwr}, we find
\[\frac{\P(E(e,k), e\in \cup_{m=2}^{\mathcal{K}}\sigma_m,C_0)}{\P(e\in \gamma, C_0)}\le C(\P(E(e,k)\mid A_{3,OOC}(e,M/4))+M^{-r}).\]

\emph{Estimate for $e\in \cup_{m=1}^{\mathcal{K}+1}\mathcal{C}_m$.} This case can be handled very similarly to that in the previous section. By a variant of Lemma \ref{lem: either-or}, we obtain the upper bound:
\begin{equation}\label{eqn: circuits-upper}
\begin{split}
    \mathbb{P}( E(e,k), e\in \cup_{m=1}^{\mathcal{K}+1}\mathcal{C}_m,C_0)
    &\le C(\P(E(e,k),A_3(e,M/4))\\
    &+ M^{-r}\pi_3(e,M/4))\P(A_{2,OC}^{hp}(p(e),5M/4,3n/4))\P(A_{1,O}(n/4)).
\end{split}
\end{equation}

We use the lower bound established for $\P(e\in \gamma, C_0)$ in \eqref{eqn: lwr-bound-sK+1-caseB}. Combined with the upper bound \eqref{eqn: circuits-upper}, we obtain
\[\frac{\P(E(e,k),  e\in \cup_{m=1}^{\mathcal{K}+1}\mathcal{C}_m, C_0)}{\P(e\in \gamma, C_0)}\le C\P(E(e,k)\mid A_{3,OOC}(M/4)).\]

\bibliographystyle{plain}
\bibliography{reference}
        
\end{document}